\let\expandafter\xbf\csname bfseries \endcsname
\let\expandafter\xmd\csname mdseries \endcsname
\let\xbar\bar
\let\bar\xbar
\let\csname bfseries \endcsname\xbf
\let\csname mdseries \endcsname\xmd
\newtheorem{theorem}{Theorem}[section]
\newtheorem{lemma}{Lemma}[section]
\newtheorem{remark}{Remark}[section]
\newtheorem*{Rem*}{Remark}
\newcommand{\R}{\mathbb{R}}
\newcommand{\N}{\mathbb{N}}
\newcommand{\C}{\mathbb{C}}
\newcommand{\Z}{\mathbb{Z}}
\newcommand{\OO}{\mathcal{O}}
\newcommand{\V}{\mathcal{V}}
\newcommand{\edproof}{ $\hfill {\Box}$}
\DeclareMathOperator{\supp}{supp}
\newcommand{\red}[1]{\textcolor{black}{#1}}
\newcommand{\blue}[1]{\textcolor{black}{#1}}
\title
[Variation inequalities]
{Variation inequalities for Riesz transforms and Poisson semigroups associated with Laguerre polynomial expansions}
\author[J. J. Betancor]{J. J. Betancor}
\address{Jorge J. Betancor\newline
	Departamento de An\'alisis Matem\'atico, Universidad de La Laguna,\newline
	Campus de Anchieta, Avda. Astrof\'isico S\'anchez, s/n,\newline
	38721 La Laguna (Sta. Cruz de Tenerife), Spain}
\email{jbetanco@ull.es}
\author[M. De Le\'on-Contreras]{M. De Le\'on-Contreras$^*$}
\address{\newline
       Marta De Le\'on-Contreras \newline
       Department of  Mathematical Sciences, Norwegian University of Science and Technology (NTNU),  \newline NO-7491 Trondheim, Norway
       }
\email{marta.deleoncontreras@ntnu.no }
\keywords{}
\subjclass[2010]
{}
\thanks{$^*$Corresponding author}
\begin{document}

\begin{abstract}
In this paper we establish $L^p$-boundedness properties for variation, oscillation and jump operators associated with Riesz transforms and Poisson semigroups related to Laguerre polynomial expansions.
\end{abstract}
\maketitle

\setcounter{secnumdepth}{3}
\setcounter{tocdepth}{3}


\section{Introduction}
Let $\alpha>-1.$ For every $k\in\N$ we denote by $L_k^\alpha$ the $k$-th Laguerre polynomial defined by  
$$
L_k^\alpha(x) =\frac{1}{\Gamma(k+1)}x^{-\alpha}e^x\frac{d^k}{dx^k}(e^{-x}x^{k+\alpha}), \quad x\in (0,\infty),
$$
see \cite{Leb} and \cite{Sz}.
The sequence $\left\{ \tilde{L}_k^\alpha=\sqrt{\frac{\Gamma(k+1){\Gamma(\alpha+1)}}{\Gamma(k+\alpha+1)}}{L}_k^\alpha\right\}_{k\in\N}$ is an orthonormal basis in $L^2((0,\infty), \mu_\alpha)$ where $ d\mu_\alpha(x)=\frac{x^{\alpha}e^{-x}dx}{{\Gamma(\alpha+1)}}$ on $(0,\infty).$

We consider the Laguerre differential operator $\mathbf{L}_\alpha$ given by 
$$
\mathbf{L}_\alpha=x\frac{d^2}{dx^2}+(\alpha+1-x)\frac{d}{dx}, \:\;\:\quad\text{ in } (0,\infty).
$$
We have that $\mathbf{L}_\alpha({L}_k^\alpha)=k\;{L}_k^\alpha$, $k\in\N.$

\red{Let $n\in \N$, $n\ge 1$}. Suppose that $\alpha=(\alpha_1,\dots,\alpha_n)\in(-1,\infty)^n$. For every $k=(k_1,\dots,k_n)\in\N^n,$ we define the $k$-th Laguerre polynomial in $(0,\infty)^n$ by
$$
L_k^\alpha(x)=\prod_{i=1}^nL_{k_i}^{\alpha_i}(x_i), \quad x=(x_1,\dots,x_n)\in (0,\infty)^n.
$$
The family $\left\{\prod_{i=1}^n\sqrt{\frac{\Gamma(k_i+1){\Gamma(\alpha_i+1)}}{\Gamma(k_i+\alpha_i+1)}}{L}_k^{{\alpha}}\right\}_{k=(k_1,\dots,k_n)\in\N^n}$ is an orthonormal basis in $L^2((0,\infty)^n,\mu_\alpha)$, where $d\mu_\alpha(x)=\prod_{i=1}^n\frac{x_i^{\alpha_i}e^{-x_i}}{{\Gamma(\alpha_i+1)}}dx_i$ on $(0,\infty)^n.$

We consider the Laguerre operator $\mathbf{L}_\alpha=\sum_{i=1}^n\mathbf{L}_{\alpha_i}$ in $(0,\infty)^n$. We have that
$$
\mathbf{L}_\alpha(L_k^{{\alpha}})=\sum_{i=1}^nk_iL_{k}^{{\alpha}}\,\quad k=(k_1,\dots,k_n)\in\N^n.
$$
For every $f\in L^2((0,\infty)^n, \mu_\alpha)$ we define
$$
c_k^\alpha(f)=\int_{(0,\infty)^n}f(x)L_k^\alpha(x)d\mu_\alpha(x)\prod_{i=1}^n\frac{\Gamma(k_i+1){\Gamma(\alpha_i+1)}}{\Gamma(k_i+\alpha_{{i}}+1)}.
$$
We define the operator $\mathcal{L}_\alpha$ as follows
$$
\mathcal{L}_\alpha(f)=\sum_{k\in\N^n}\lambda_k c_k^\alpha(f)L_k^\alpha,\quad f\in\mathcal{D}(\mathcal{L}_\alpha),
$$
where $\lambda_k=\sum_{i=1}^n k_i,$ $k=(k_1,\dots,k_n)\in\N^n,$ and
$$
\mathcal{D}(\mathcal{L}_\alpha)=\left\{f\in L^2((0,\infty)^n,\mu_\alpha):\quad \sum_{k\in\N^n}\Bigg|\lambda_k c_k^\alpha(f)\prod_{i=1}^n\sqrt{\frac{\Gamma(k_i+\alpha_i+1)}{\Gamma(k_i+1)\Gamma(\alpha_i+1)}}\Bigg|^2<\infty \right\}
$$
is the domain of $\mathcal{L}_\alpha$. If $f\in C^\infty_c((0,\infty)^n)$, the space of smooth functions with compact support in $(0,\infty)^n$, then $f\in \mathcal{D}(\mathcal{L}_\alpha)$ and $\mathcal{L}_\alpha f=\mathbf{L}_\alpha f.$

The operator $\mathcal{L}_\alpha$ is positive and symmetric. Furthermore, $-\mathcal{L}_\alpha$ generates a semigroup of operators $\{W_t^\alpha \}_{t>0}$ in $L^2((0,\infty)^n,\mu_\alpha)$ where, for every $t>0,$
$$
W_t^\alpha(f)=\sum_{k\in\N^n}e^{-\lambda_kt}c_k^\alpha(f)L_k^\alpha, \quad f\in L^2((0,\infty)^n, \mu_\alpha).
$$
According to the {Hille}-Hardy formula, see \cite[(4.17.6)]{Leb}, we have that
\begin{align*}
    \sum_{k\in\N^n}&e^{-\lambda_kt}L_k^\alpha(x)L_k^\alpha(y)\prod_{i=1}^n\frac{\Gamma(k_i+1){\Gamma(\alpha_i+1)}}{\Gamma(k_i+\alpha_i+1)}\\
    &=(1-e^{-t})^{-n}\prod_{i=1}^n{\Gamma(\alpha_i+1)}\exp\left(-\frac{e^{-t}}{1-e^{-t}}(x_i+y_i) \right)(e^{-t}x_iy_i)^{-\alpha_i/2}I_{\alpha_i}\left(\frac{2\sqrt{e^{-t}x_i y_i}}{1-e^{-t}}\right), \\&\qquad x=(x_1,\dots,x_n), \: y=(y_1,\dots,y_n)\in (0,\infty)^n \text{ and } t>0.
\end{align*}
Here $I_\nu$ denotes the modified Bessel function of the first kind and order $\nu$, see \cite[(5.7.1)]{Leb}. We can write, for every $f\in L^2((0,\infty)^n, \mu_\alpha),$

\begin{equation}\label{eq1.1}
    W_t^\alpha(f)(x)=\int_{(0,\infty)^n}W_t^\alpha (x,y)f(y)d\mu_\alpha(y), \quad x\in (0,\infty)^n,
\end{equation}
where
\begin{align*}
    W_t^\alpha(x,y)&=(1-e^{-t})^{-n}\prod_{i=1}^n{\Gamma(\alpha_i+1)}\exp\left(-\frac{e^{-t}}{1-e^{-t}}(x_i+y_i) \right)(e^{-t}x_iy_i)^{-\alpha_i/2}I_{\alpha_i}\left(\frac{2\sqrt{e^{-t}x_i y_i}}{1-e^{-t}}\right),
     \\&\qquad x=(x_1,\dots,x_n), \: y=(y_1,\dots,y_n)\in (0,\infty)^n \text{ and } t>0.
\end{align*}
The integral in \eqref{eq1.1} is absolutely convergent for every $f\in L^p((0,\infty)^n, \mu_\alpha)$ with  $1\le p\le \infty.$ We define, for every $t>0$ and $1\le p\le \infty,$ $W_t^\alpha$ on $L^p((0,\infty)^n, \mu_\alpha)$ by \eqref{eq1.1}. Thus, $\{W_t^\alpha \}_{t>0}$ is a symmetric diffusion semigroup in the Stein's sense, see \cite{StLP}, on the measure space $((0,\infty)^n,\mu_\alpha)$.

The Poisson semigroup $\{ P_t^\alpha\}_{t>0}$ associated with $\mathcal{L}_\alpha$ is defined by using the subordination principle, that is, for every $t>0$ and $f\in L^p((0,\infty)^n,\mu_\alpha)$, with $1\le p\le \infty$,
$$
P_t^\alpha(f)(x)=\frac{t}{2\sqrt{\pi}}\int_0^\infty \frac{e^{-\frac{t^2}{4u}}}{u^{3/2}} W_u^\alpha (f)(x)du, \quad x\in (0,\infty)^n.
$$
$\{P_t^\alpha\}_{t>0}$ is a symmetric diffusion semigroup in $((0,\infty)^n,\mu_\alpha)$.

The study of harmonic analysis in the Laguerre setting was begun by Muckenhoupt. He established $L^p$-boundedness properties for the maximal operator defined by the Laguerre semigroup $\{{W_t^\alpha}\}_{t>0}$, see \cite[Theorem 3]{Mu2}, and for the Riesz transformation associated with $\mathcal{L}_\alpha$, see \cite[Theorem 3]{Mu1}, both in one dimension. The results in \cite[Theorem 3]{Mu2} were extended to any higher dimension in \cite[Theorem 1]{Di}. Multidimensional Laguerre Riesz transforms were studied by Guti\'errez, Incognito and Torrea, \cite{GIT}, by using a transference method. They obtained $L^p$, $1<p<\infty$, boundedness properties for the discrete set of half-integers multi-indices $\alpha$. These results were extended to $\alpha\in [-1/2,\infty)^n$ by Nowak, see \cite[Theorem 13]{No}, by using the methods contained in \cite{StLP} based on the Littlewood-Paley-Stein theory. Furthermore, in \cite[Therorem 13]{No} it was proved that the norm of Laguerre-Riesz transforms can be controlled by constants that are independent of the dimension $n$. Other dimension free bounds for Riesz transforms in the Laguerre setting had been proved in \cite{GLLNU}, \cite{GIT}  and \cite{MS}. More recently, Wrobel, see \cite{WR1} and \cite{WR2}, extended the free dimension $L^p$-boundedness properties for the Riesz-Laguerre transforms when $\alpha\in (-1,\infty)^n$ and $1<p<\infty$. The weak type $(1,1)$ with respect to $\mu_\alpha$ for Riesz transforms was proved by Sasso when $\alpha\in (0,\infty)^n$, see \cite[Theorem 1.1]{Sa1}. All the above mentioned results concerned the first order Riesz transforms.  In \cite{FSS1}, Forzani, Sasso and Scotto obtained the weak type $(1,1)$ for the second order Riesz-Laguerre transforms. They also find the sharp polynomial weight $w$ making that the Riesz-Laguerre transforms of order greater than two are bounded from $L^1((0,\infty)^n, wd\mu_\alpha)$ into $L^{1,\infty}((0,\infty)^n, d\mu_\alpha)$, when $\alpha\in (0,\infty)^n$. Other operators associated with the Laguerre harmonic analysis were studied in \cite{ FSS2, HE,Sa2,Sa3, Sa4}.

\vspace{1cm}

The aim of this paper is to study the $L^p$-boundedness properties for the variation, oscillation and jump operators defined by the Laguerre Poisson semigroups and the family of {truncated} integrals for Riesz-Laguerre transforms.

Suppose that $(X,\mu)$ is a measure space and that, for every $t>0,$ $S_t$ is a bounded operator from $L^p(X,\mu)$ into itself, with $1\le p<\infty.$

Let $\rho>2$. For every $f\in L^p(X,\mu)$, the variation operator $\V_\rho(\{S_t\}_{t>0})$ is defined by
$$
\V_\rho(\{S_t\}_{t>0})(f)(x)=\sup_{\substack{0<t_k<t_{k-1}<\dots<t_1\\k\in\N}}\left(\sum_{j=1}^{k-1}|S_{t_j}(f)(x)-S_{t_{j+1}}(f)(x)|^\rho \right)^{1/\rho},\:\; x\in X.
$$
Note that, in general, some conditions of $t$-continuity should be imposed in order to ensure the measurability of $\V_\rho(\{S_t\}_{t>0})(f)$ (see the comments after Theorem 1.2 in \cite{CJRW1}).

If $\{t_j\}_{j\in\N}\subset (0,\infty)$ is a decreasing sequence that converges to zero, the oscillation operator of $f\in L^p(X,\mu)$,  $\OO(\{S_t\}_{t>0},\{t_j\}_{j\in\N})(f)$, is given by
$$
\OO(\{S_t\}_{t>0},\{t_j\}_{j\in\N})(f)(x)=\left(\sum_{j=1}^{\infty}\sup_{t_{j+1}\le s_{j+1}<s_j\le t_{j}}|S_{s_j}(f)(x)-S_{s_{j+1}}(f)(x)|^2 \right)^{1/2},\:\; x\in X.
$$
For every $k\in\Z$ and $f\in L^p(X,\mu)$, we define
$$
V_k(\{S_t\}_{t>0})(f)(x)=\sup_{\substack{2^{-k}<t_l<t_{l-1}<\dots<t_{1}<2^{-k+1}\\l\in\N}}\left(\sum_{j=1}^{l-1}|S_{t_j}(f)(x)-S_{t_{j+1}}(f)(x)|^2\right)^{1/2},\:\; x\in X.
$$
For every $f\in L^p(X,\mu)$, the short variation operator $\mathcal{S}_V(\{S_t\}_{t>0})$ is defined by
$$
\mathcal{S}_V(\{S_t\}_{t>0})(f)(x)=\left( \sum_{k=-\infty}^\infty (V_k(\{S_t\}_{t>0})(f)(x))^2\right)^{1/2},\quad x\in X.
$$
Let $\lambda>0.$ For $f\in L^p(X,\mu)$, the $\lambda$-jump operator $\Lambda(\{S_t\}_{t>0},\lambda)$ is defined by
\begin{align*}
    \Lambda(\{S_t\}_{t>0},\lambda)(f)(x)=\sup\{n\in\N: \:\exists \: s_1<t_1\le s_2<t_2\le\dots\le s_n<t_n, \text{ such that }\\ |S_{t_i}(f)(x)-S_{s_i}(f)(x)|>\lambda, \: i=1,\dots,n\}, \: \; x\in X.
\end{align*}
The analysis of these variation, oscillation and jump operators inform us about convergence properties for $\{S_t\}_{t>0}.$

One of the motivations of the variational inequalities is to improve the well-known Doob's maximal inequality, see \red{\cite[Chapter XI]{Doob}}. After the pioneer work of L\'epingle in \cite{Le},  Bourgain obtained the variational estimates related to the Birkhoff ergodic averages, see \cite{Bou}. This work opens  a new research subject in harmonic analysis and ergodic theory. Bourgain's results were extended by Jones, Kaufmann, Rosenblatt and Wierdl in \cite{JKRW}. Variational inequalities for truncations of singular integrals were initiated by Campbell, Jones, Reinhold and Wierdl, \cite{CJRW1}, who studied them for the Hilbert transform. Later, higher dimensional singular integrals were considered in \cite{CJRW2}. $L^p$-boundedness properties for variation operators associated with the classical Poisson semigroups were established in \cite{JW}, see also \cite{CJRW2}. When $p>1$, the results can be extended to strongly continuous semigroups of positive contractions in $L^p$, see \cite{LeX}. In \cite{JR}, it was proved that the variation operators for Stein symmetric diffusion semigroups are of weak type $(1,1).$ For further studies for variation operators related to singular integrals and semigroups of operators, we refer to the reader to \cite{BFHR,CDHL,CHL,CMMTV,GT,HMMT,JSW,MTX1,MTX2} and the references therein.

Let $\beta\ge 0$. We choose $m\in\N$ such that $m-1\le \beta<m.$ Suppose that $g\in C^m(0,\infty).$ We define the Weyl derivative, $\mathcal{D}^\beta g$, as follows
$$
\mathcal{D}^\beta g(t)=\frac{e^{i\pi(m-\beta)}}{\Gamma(m-\beta)}\int_0^\infty g^{(m)}(t+s) s^{m-\beta-1}ds,
$$
provided that $t>0$ and the integral exists. Note that if $\beta\in\N$ and $\lim_{t\to\infty}g^{(\beta)}(t)=0$, then $\mathcal{D}^\beta g(t)=g^{(\beta)}(t)$, $t>0.$

In our first result we establish $L^p$-boundedness properties for $\{ t^\beta\mathcal{D}^\beta P_t^\alpha\}_{t>0}$, with $\beta\ge 0.$

\begin{theorem}\label{teo1.1}
Let $\beta\ge 0$, $\rho>2$ and $\alpha\in (\red{0},\infty)^n.$ Assume that $\{t_j\}_{j\in\N}$ is a decreasing sequence in $(0,\infty)$ that converges to $0$. Then, the operators
\begin{multicols}{2}
\begin{itemize}
\item[$i)$] $\V_\rho(\{ t^\beta\mathcal{D}^\beta P_t^\alpha\}_{t>0})$\\
\item[$ii)$] $\OO(\{t^\beta\mathcal{D}^\beta P_t^\alpha\}_{t>0},\{t_j\}_{j\in\N})$,
\end{itemize}
\begin{itemize}
\item[$iii)$] ${\lambda}\Lambda(\{t^\beta\mathcal{D}^\beta P_t^\alpha\}_{t>0},\lambda)^{1/\rho}$ with $\lambda>0$,\\
\item[$iv)$] $\mathcal{S}_V(\{t^\beta\mathcal{D}^\beta P_t^\alpha\}_{t>0}),$ 
\end{itemize}
\end{multicols}   are bounded from $L^p((0,\infty)^n,\mu_\alpha)$ into itself, for every $1<p<\infty$, and from \newline$L^1((0,\infty)^n,\mu_\alpha)$ into $L^{1,\infty}((0,\infty)^n,\mu_\alpha)$. Furthermore, the $L^p$-boundedness properties of ${\lambda}\Lambda(\{t^\beta\mathcal{D}^\beta P_t^\alpha\}_{t>0},\lambda)^{1/\rho}$ are uniform in $\lambda>0.$
\end{theorem}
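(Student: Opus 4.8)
The four operators will be handled together, and the $L^p$ ($1<p<\infty$) and weak $(1,1)$ statements will come out of one decomposition. First, the jump bound $iii)$ reduces to the variation bound $i)$: for any family $\{S_t\}_{t>0}$ and $\rho>2$ one has the pointwise inequality $\lambda\,\Lambda(\{S_t\}_{t>0},\lambda)(f)(x)^{1/\rho}\le \V_\rho(\{S_t\}_{t>0})(f)(x)$ with a constant independent of $\lambda$ (use the jump times of an admissible chain as a partition in the definition of $\V_\rho$), so $iii)$, together with its uniformity in $\lambda$, follows once $i)$ is proved. For $i)$, $ii)$, $iv)$ I would introduce, in each case, the Banach space $E$ of $(0,\infty)$-indexed scalar functions modulo constants, equipped with the seminorm appearing inside the operator — the $\rho$-variation seminorm for $i)$, the oscillation seminorm relative to $\{t_j\}_{j\in\N}$ for $ii)$, and the $\ell^2$-sum over dyadic intervals of the $2$-variation seminorms for $iv)$ — and observe that in all three cases $\|\psi\|_E\le\|\psi\|_{\mathcal V_1}=\int_0^\infty|\psi'(t)|\,dt$ for $\psi\in C^1(0,\infty)$. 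Since $P_t^\alpha f(x)=\int_{(0,\infty)^n}P_t^\alpha(x,y)f(y)\,d\mu_\alpha(y)$, Minkowski's integral inequality in $E$ shows that each of $i),ii),iv)$ applied to $f$ equals $\|\mathbb T f(x)\|_E$, where $\mathbb Tf(x)=\int_{(0,\infty)^n}\mathbb K(x,y)f(y)\,d\mu_\alpha(y)$ is the $E$-valued linear operator with kernel $\mathbb K(x,y)=\{t^\beta\mathcal D^\beta_t P_t^\alpha(x,y)\}_{t>0}$; the required measurability is guaranteed by the smoothness of $t\mapsto P_t^\alpha f(x)$ on $(0,\infty)$. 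It thus suffices to show that $\mathbb T$ is bounded from $L^p(\mu_\alpha)$ into $L^p_E(\mu_\alpha)$ for $1<p<\infty$ and from $L^1(\mu_\alpha)$ into $L^{1,\infty}_E(\mu_\alpha)$.

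For the $L^2(\mu_\alpha)$-boundedness of $\mathbb T$ I would use the spectral theorem: since $\mathcal D^\beta_t(e^{-tr})=c_\beta\,r^\beta e^{-tr}$ with $|c_\beta|=1$, one has $t^\beta\mathcal D^\beta_t P_t^\alpha=c_\beta\,u(t\sqrt{\mathcal L_\alpha})$ with the bounded model symbol $u(s)=s^\beta e^{-s}$, so the $L^2$-boundedness of $\mathbb T$ (equivalently, of the operators in $i),ii),iv)$ on $L^2$) reduces, via the spectral resolution of $\mathcal L_\alpha$, to classical variational, oscillation and short-variation estimates for the model family $\{u(t\sqrt{\mathcal L_\alpha})\}_{t>0}$. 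These follow from the fact that $\{W_t^\alpha\}_{t>0}$ is a symmetric diffusion semigroup, together with a standard subordination and spectral-multiplier argument for the Poisson-type symbol $u$ (cf.\ \cite{LeX}, and \cite{JR} for the weak-type counterpart); this step does not use the Laguerre structure.

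The remaining, Laguerre-specific part I would model on Sasso's treatment of the Riesz transforms \cite{Sa1}. Fix the local region $N$, a suitable neighbourhood of the diagonal on which $d\mu_\alpha(y)$ is comparable to $d\mu_\alpha(x)$ and to $\prod_ix_i^{\alpha_i}dx_i$, so that $N$ carries a (locally) doubling structure, and write $\mathbb K=\mathbb K\chi_N+\mathbb K\chi_{N^c}$. For the global part I would prove $\sup_y\int_{N^c}\|\mathbb K(x,y)\|_E\,d\mu_\alpha(x)<\infty$ and $\sup_x\int_{N^c}\|\mathbb K(x,y)\|_E\,d\mu_\alpha(y)<\infty$; by Schur's test this gives boundedness of $\mathbb T\chi_{N^c}$ on $L^p_E$ for all $1\le p\le\infty$. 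Using $\|\mathbb K(x,y)\|_E\le\int_0^\infty|\partial_t[t^\beta\mathcal D^\beta_t P_t^\alpha(x,y)]|\,dt$, this amounts to pointwise kernel estimates obtained by inserting the Weyl-derivative formula for $\mathcal D^\beta$ into the subordination formula for $P_t^\alpha$, and the latter into the Hille--Hardy expression for $W_u^\alpha(x,y)$, and then bounding the resulting integral in $(s,u,t)$ with the asymptotics of the Bessel functions $I_{\alpha_i}$ near $0$ and near $\infty$. For the local part, $\mathbb T\chi_N=\mathbb T-\mathbb T\chi_{N^c}$ is $L^2(\mu_\alpha)$-bounded by the previous two steps, and the same manipulations, now differentiated in $x$ and $y$, give the gradient bound $\|\nabla_x\{t^\beta\mathcal D^\beta_tP_t^\alpha(x,y)\}_{t>0}\|_E+\|\nabla_y\{t^\beta\mathcal D^\beta_tP_t^\alpha(x,y)\}_{t>0}\|_E\le C\,(\mu_\alpha(B(x,|x-y|))\,|x-y|)^{-1}$ on $N$, hence the H\"ormander conditions for $\mathbb K\chi_N$ and its transpose. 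Vector-valued Calder\'on--Zygmund theory on the locally doubling space $N$ then yields $L^p(\mu_\alpha)\to L^p_E(\mu_\alpha)$ boundedness of $\mathbb T\chi_N$ for $1<p<\infty$ and the $L^1\to L^{1,\infty}_E$ bound; combining with the global part finishes the proof.

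The main obstacle will be exactly these pointwise and gradient estimates for the $E$-valued kernel $\{t^\beta\mathcal D^\beta_tP_t^\alpha(x,y)\}_{t>0}$. One has to control the three integration variables simultaneously — the Weyl variable $s$, the subordination variable $u$, and the variational variable $t$ — split the domains according to the competing sizes of $\sqrt{e^{-u}x_iy_i}/(1-e^{-u})$, $t/\sqrt u$ and $x_i+y_i$, and exploit the Gaussian factor $e^{-t^2/(4u)}$ jointly with the factors $\exp(-\tfrac{e^{-u}}{1-e^{-u}}(x_i+y_i))$. It is at the boundary $x_i\to0$, in the global region, that these estimates force the hypothesis $\alpha\in(0,\infty)^n$, in accordance with \cite{Sa1}; the dependence on $\beta\ge0$ is harmless, entering only through the factor $s^{m-\beta-1}$ in the Weyl integral and, afterwards, through locally bounded powers and $\Gamma$-factors.
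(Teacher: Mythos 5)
Your overall architecture is close to the paper's: reduce $iii)$ to $i)$ via the pointwise jump-implies-variation inequality; view $i)$ and $ii)$ as $E$-valued linear operators for a suitable seminormed space $E$ dominated by the total variation $\int_0^\infty|\psi'(t)|\,dt$; get the $L^p$ ($1<p<\infty$) case from the symmetric diffusion semigroup machinery of \cite{LeX}/\cite{JR}; and prove the weak $(1,1)$ by a Sasso-style local/global split plus vector-valued Calder\'on--Zygmund theory on the doubling local measure. Two points of divergence: the paper first transfers the whole problem to the squared-variable setting $(x\mapsto x^2,\ \mu_\alpha\mapsto\nu_\alpha)$, where the Hille--Hardy and Bessel integral representations produce the quadratic forms $q_\pm(x,y,s)$ and the local/global split is naturally formulated; and the paper handles $iv)$ not through your $E$-space $\ell^2(\mathbb Z;V_2)$ but by bounding $\mathcal S_V$ pointwise by the fractional Littlewood--Paley functions $g_\alpha^\beta+g_\alpha^{\beta+1}$ and then proving Theorem~\ref{teo1.2} separately (Stein's $g$-function theorem plus a parallel CZ argument). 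Your $E$-space treatment of $iv)$ is a legitimate alternative route, but be aware it is not what the paper does.

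There is one genuine gap. Your global estimate is claimed via a Schur test: $\sup_y\int_{N^c}\|\mathbb K(x,y)\|_E\,d\mu_\alpha(x)<\infty$ and the symmetric bound, yielding $L^p_E$-boundedness ``for all $1\le p\le\infty$.'' If that Schur condition held, the global operator would be of \emph{strong} type $(1,1)$. In the Gaussian and Laguerre settings the global part of Riesz-type and Poisson-type operators is known to be only of \emph{weak} type $(1,1)$, not strong type: the dominating kernel on the global region has the form $\bigl(q_+(x,y,s)/q_-(x,y,s)\bigr)^{(n+\sum\alpha_i)/2}\exp\!\bigl(-\tfrac{|y|^2-|x|^2}{2}-\tfrac{\sqrt{q_+q_-}}{2}\bigr)$ for $\sum s_ix_iy_i\ge0$, and this is precisely the kernel whose weak $(1,1)$ estimate is Sasso's \cite[Proposition~3.1]{Sa1}; the companion $L^2$ bound is \cite[Proposition~3]{Sa2}, and the intermediate $L^q$ come by interpolation/duality. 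The paper proves the pointwise domination $\V_\rho(\{t^\beta\mathcal D_t^\beta\hat P^\alpha_{t,\mathrm{glob}}\})(f)(x)\le\int\!\int K_\alpha(x,y,s)(1-\varphi)\Pi_\alpha\,ds\,|f(y)|\,dm_\alpha(y)$ by observing that, after subordination, $\int_0^\infty|g'_{x,y,s}(u)|\,du\lesssim\sup_u|g_{x,y,s}(u)|$ (because $g'_{x,y,s}$ changes sign boundedly many times) and then computing $\sup_u g_{x,y,s}(u)$ by a careful analysis of the critical point. You would need this argument, not a Schur test.

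A secondary concern: the ``spectral resolution'' step for $L^2$ is not a self-contained argument. Writing $t^\beta\mathcal D_t^\beta P_t^\alpha=c_\beta\,u(t\sqrt{\mathcal L_\alpha})$ with $u(s)=s^\beta e^{-s}$ does not, by spectral theory alone, give $L^2$-boundedness of the associated variation operator --- the map $f\mapsto\V_\rho(\{m_t(\mathcal L_\alpha)\})(f)$ is sublinear and cannot be diagonalized; uniform boundedness of $\|u(\cdot\sqrt\lambda)\|_{E_\rho}$ over $\lambda$ is far from sufficient. What works (and what the paper actually does, in Section~2.1.1) is to differentiate the subordination formula, apply Minkowski in the Weyl and subordination integrals, and arrive at the pointwise comparison $\V_\rho(\{t^\beta\mathcal D_t^\beta\hat P_t^\alpha\})(f)\le C\,\V_\rho(\{\hat W_t^\alpha\})(f)$; then $\V_\rho(\{\hat W_t^\alpha\})$ is bounded on $L^q(\nu_\alpha)$, $1<q<\infty$, by \cite[Corollary~6.1]{LeX}. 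You cite the right references, but the surrounding reasoning should be corrected along these lines.
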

By the way in the proof  of Theorem \ref{teo1.1} $iv)$ 
we shall establish $L^p$-boundedness properties for the fractional Littlewood-Paley functions associated with the Poisson semigroup $\{P_t^\alpha\}_{t>0}$. For every $\beta>0$, we define the Littlewood-Paley function $g_\alpha^\beta$ by
$$
g_\alpha^\beta (f)(x)=\left( \int_0^\infty |t^\beta\mathcal{D}_t^\beta P_t^\alpha (f)(x)|^2\frac{dt}{t}\right)^{1/2}, \quad x\in (0,\infty)^n.
$$
\begin{theorem}\label{teo1.2}
Let $\alpha\in (\red{0},{\infty})^n$ and $\beta>0.$ The operator $g_\alpha^\beta$ is bounded from $L^p((0,\infty)^n,\mu_\alpha)$ into itself, for every $1<p<\infty$, and from $L^1((0,\infty), \mu_\alpha)$ into  $L^{1,\infty}((0,\infty), \mu_\alpha)$.
\end{theorem}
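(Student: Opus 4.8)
\emph{Proof strategy.} The plan is to regard $g_\alpha^\beta(f)(x)=\|t^\beta\mathcal{D}_t^\beta P_t^\alpha(f)(x)\|_{L^2((0,\infty),dt/t)}$ as the $L^2(dt/t)$-norm of the vector-valued operator $G_\alpha^\beta:f\mapsto(t^\beta\mathcal{D}_t^\beta P_t^\alpha(f))_{t>0}$, and to prove that $G_\alpha^\beta$ maps $L^p((0,\infty)^n,\mu_\alpha)$ boundedly into $L^p((0,\infty)^n,\mu_\alpha;L^2(dt/t))$ for $1<p<\infty$ and $L^1((0,\infty)^n,\mu_\alpha)$ into the corresponding weak-type space. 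The first step is the $L^2$-estimate, which I would get from the spectral representation: by the subordination formula $P_t^\alpha$ acts on the $k$-th Laguerre eigenfunction ($k\in\N^n$) by multiplication by $e^{-t\sqrt{\lambda_k}}$, and the definition of the Weyl derivative gives, for $\lambda_k>0$,
$$
\mathcal{D}_t^\beta e^{-t\sqrt{\lambda_k}}=e^{-i\pi\beta}\,\lambda_k^{\beta/2}\,e^{-t\sqrt{\lambda_k}},\qquad t>0,
$$
whereas $\mathcal{D}_t^\beta 1=0$. Hence $t^\beta\mathcal{D}_t^\beta P_t^\alpha$ acts on the $k$-th eigenspace as multiplication by $e^{-i\pi\beta}(t\sqrt{\lambda_k})^\beta e^{-t\sqrt{\lambda_k}}$, and, since $\beta>0$,
$$
\int_0^\infty\big|(t\sqrt{\lambda_k})^\beta e^{-t\sqrt{\lambda_k}}\big|^2\,\frac{dt}{t}=2^{-2\beta}\,\Gamma(2\beta),\qquad\lambda_k>0.
$$
Fubini's theorem and Parseval's identity in the orthonormal Laguerre basis then yield $\|g_\alpha^\beta f\|_{L^2(\mu_\alpha)}\le(2^{-2\beta}\Gamma(2\beta))^{1/2}\|f\|_{L^2(\mu_\alpha)}$ (the interchange of $\mathcal{D}_t^\beta$ with the spectral sum being first justified on the dense subspace of finite linear combinations of Laguerre polynomials, where everything is elementary).

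For the remaining exponents I would run a vector-valued Calder\'on--Zygmund argument with respect to $\mu_\alpha$, which yields $1<p<\infty$ and $p=1$ simultaneously (for $1<p<\infty$ the boundedness of $g_\alpha^\beta$ also follows from Stein's Littlewood--Paley theory for symmetric diffusion semigroups, but the Calder\'on--Zygmund route is needed for the endpoint). Write the vector-valued kernel of $G_\alpha^\beta$ as $\mathbf{K}(x,y)=(t^\beta\mathcal{D}_t^\beta P_t^\alpha(x,y))_{t>0}$, and fix an admissible neighbourhood of the diagonal, e.g. $\mathcal{N}=\{(x,y):|x-y|\le c\min(1,|x|^{-1/2})\}$ (together with its symmetrization), on which $\mu_\alpha$ is doubling. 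Using the subordination formula for $P_t^\alpha$, the Hille--Hardy formula for $W_u^\alpha(x,y)$ and the standard asymptotics and derivative bounds for the Bessel functions $I_{\alpha_i}$ (equivalently, Gaussian-type bounds for $W_u^\alpha$ and its space- and time-derivatives), I would establish: \emph{(a) local Calder\'on--Zygmund estimates,} $\|\mathbf{K}(x,y)\|_{L^2(dt/t)}\lesssim\mu_\alpha(B(x,|x-y|))^{-1}$ and $\|\nabla_x\mathbf{K}(x,y)\|_{L^2(dt/t)}+\|\nabla_y\mathbf{K}(x,y)\|_{L^2(dt/t)}\lesssim\big(|x-y|\,\mu_\alpha(B(x,|x-y|))\big)^{-1}$ for $(x,y)\in\mathcal{N}$; and \emph{(b) global integrability,} $\sup_{y}\int_{\{x:(x,y)\notin\mathcal{N}\}}\|\mathbf{K}(x,y)\|_{L^2(dt/t)}\,d\mu_\alpha(x)<\infty$ together with the symmetric estimate in $x$.

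Granting (a) and (b), the proof concludes as follows. Split $\mathbf{K}=\mathbf{K}\1_{\mathcal{N}}+\mathbf{K}\1_{\mathcal{N}^c}=:\mathbf{K}_{\mathrm{loc}}+\mathbf{K}_{\mathrm{glob}}$ and, accordingly, $g_\alpha^\beta\le g_{\mathrm{loc}}^\beta+g_{\mathrm{glob}}^\beta$ pointwise. By (b) and Minkowski's integral inequality (a vector-valued Schur test), $g_{\mathrm{glob}}^\beta$ is bounded on $L^p(\mu_\alpha)$ for every $1\le p\le\infty$; in particular it is bounded on $L^2(\mu_\alpha)$, so, combined with the $L^2$-estimate for $g_\alpha^\beta$, the operator $g_{\mathrm{loc}}^\beta$ is bounded on $L^2(\mu_\alpha)$. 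Since $\mu_\alpha$ is doubling on admissible balls and $\mathbf{K}_{\mathrm{loc}}$ satisfies the size and regularity conditions of (a), the localized Calder\'on--Zygmund theory in this setting shows that $g_{\mathrm{loc}}^\beta$ is bounded on $L^p(\mu_\alpha)$ for $1<p<\infty$ and from $L^1(\mu_\alpha)$ into $L^{1,\infty}(\mu_\alpha)$. Adding the two pieces proves Theorem~\ref{teo1.2}.

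\emph{Main obstacle.} The technical core is the pair of estimates (a)--(b). Since the Weyl derivative $\mathcal{D}_t^\beta$ is nonlocal in $t$, one must first produce a workable integral representation of $t^\beta\mathcal{D}_t^\beta P_t^\alpha(x,y)$ -- obtained by applying $\mathcal{D}_t^\beta$ to the subordinating kernel $t\,u^{-3/2}e^{-t^2/(4u)}$ and interchanging the order of integration -- and then bound the resulting $L^2(dt/t)$-norm of the kernel uniformly in the relevant region: near $t=0$ the asymptotics of $I_{\alpha_i}$ and the small-time behaviour of $W_u^\alpha$ enter, whereas for $t\to\infty$ one must exploit that $t^\beta\mathcal{D}_t^\beta P_t^\alpha$ annihilates constants (its symbol vanishes at $\lambda_k=0$) to get convergence of the $dt/t$-integral; all of this has to be organized along the local/global splitting dictated by $\mathcal{N}$, which (as is typical in the Laguerre setting with $\alpha\in(0,\infty)^n$) is the source of the restriction on $\alpha$. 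Once the scalar kernel bounds are in place, the vector-valued estimates (a) and (b) follow by routine, if lengthy, computation.
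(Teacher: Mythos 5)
The spectral $L^2$-computation is correct, and the overall architecture --- $L^2$ plus a local/global splitting with vector-valued Calder\'on--Zygmund estimates for the local piece --- is broadly the paper's (after a reduction to integer $\beta=m$ via the comparison $g_\alpha^{\gamma_1}\lesssim g_\alpha^{\gamma_2}$ for $\gamma_1\le\gamma_2$, a change of variables to the $\nu_\alpha$-picture via $\Psi(x)=x^2$, and the observation that $1<p<\infty$ already follows from Stein's theory). But your step (b), the two-sided Schur estimate for the global kernel, is wrong, and it is the crux of the endpoint. Having both $\sup_y\int\|\mathbf{K}(x,y)\|_{L^2(dt/t)}\,d\mu_\alpha(x)<\infty$ and the symmetric condition would make $g^\beta_{\mathrm{glob}}$ bounded on $L^1(\mu_\alpha)$, strong type, and in fact on $L^p(\mu_\alpha)$ for every $1\le p\le\infty$; this is exactly what does \emph{not} happen for global kernels in Gaussian-type and Laguerre settings, where the global part is only weak $(1,1)$, and the proof of that fact (going back to Sj\"ogren for the Ornstein--Uhlenbeck semigroup, and to Sasso here) is a genuine covering argument, not a Schur test. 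The paper instead bounds $\hat{g}^m_{\alpha,\mathrm{glob}}(f)\lesssim\int_0^\infty|\partial_u\hat{W}^\alpha_{u,\mathrm{glob}}(f)|\,du$, controls the $u$-integral by $\sup_u|g_{x,y,s}(u)|$ using that $g'_{x,y,s}$ changes sign a bounded number of times, and recognizes the resulting positive kernel as Sasso's global kernel $K_\alpha(x,y,s)$, for which the weak $(1,1)$ with respect to $\nu_\alpha$ is quoted from \cite[Prop.~3.1]{Sa1} and $L^q$, $1<q<\infty$, from \cite[Prop.~3]{Sa2}; this step cannot be replaced by a Schur test.

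Two further points. The paper's local region $N_\tau$ is cut out by $q_-(x,y,s)^{1/2}\le C_0\tau/(1+|x|+|y|)$ in $(0,\infty)^n\times(0,\infty)^n\times(-1,1)^n$; the extra variable $s$, coming from the integral representation of $I_{\alpha_i}$, is essential to obtain the clean pointwise bound on the global kernel, and a neighbourhood of the diagonal in $(x,y)$ alone does not suffice. Also, your admissible tube $|x-y|\le c\min(1,|x|^{-1/2})$ is not the right one for $\mu_\alpha$: pulling back $|u-v|\le c(1+|u|)^{-1}$ through $x=u^2$ gives $|x-y|\lesssim\min(1,\sqrt{x})$; and in any case the paper does not run Calder\'on--Zygmund theory against $\mu_\alpha$ or $\nu_\alpha$ at all but against the polynomial, globally doubling measure $m_\alpha$, transferring back to $\nu_\alpha$ by a covering argument (Lemma~\ref{Claim2.3}).
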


We now consider, for every $\alpha>-1$, the Laguerre type operator $\tilde{\Delta}_\alpha$ defined by 
$$
\tilde{\Delta}_\alpha=\frac{1}{2}\frac{d^2}{dx^2}+\left( \frac{2\alpha+1}{2x}-x\right)\frac{d}{dx}+\alpha+1, \quad \text{on } (0,\infty).
$$
We have that
$$
\tilde{\Delta}_\alpha L_k^\alpha(x^2)=(2k+\alpha+1)L_k^\alpha(x^2), \:\; x\in (0,\infty) \text{ and } k\in\N.
$$

Let $\alpha=(\alpha_1,\dots,\alpha_n)\in (-1,{\infty})^n$, \red{$n\in \N$}. We define $\tilde{\Delta}_\alpha=\sum_{i=1}^n\tilde{\Delta}_{\alpha_i}$ on $(0,\infty)^n.$ For every $k=(k_1,\dots,k_n)\in\N^n$, we consider 
$$
\mathcal{L}_k^\alpha(x)=\prod_{i=1}^nL_{k_i}^{\alpha_i}(x_i^2)\sqrt{\frac{\Gamma(k_i+1){\Gamma(\alpha_i+1)}}{\Gamma(k_i+\alpha_i+1)}}, \:\; x=(x_1,\dots,x_n)\in (0,\infty)^n.
$$
We have that
$$
\tilde{\Delta}_\alpha\mathcal{L}_k^\alpha=(2\sum_{i=1}^nk_i+\sum_{i=1}^n\alpha_i+n)\mathcal{L}_k^\alpha,\:\:\;k=(k_1,\dots,k_n)\in\N^n.
$$
The sequence $\{\mathcal{L}_k^\alpha \}_{k\in\N}$ is an orthonormal basis in $L^2((0,\infty)^n, \nu_\alpha)$,  where $d\nu_\alpha(x)={2^n}\prod_{j=1}^n\frac{x_j^{2{\alpha_j}+1}e^{-x_j^2}}{{\Gamma(\alpha_j+1)}}dx_j.$

We define the operator $\Delta_\alpha$ by
$$
\Delta_\alpha f=\sum_{k\in\N^n}\lambda_k^\alpha d_k^\alpha(f)\mathcal{L}_k^\alpha, \:\;f\in\mathcal{D}(\Delta_\alpha),
$$
where $\mathcal{D}(\Delta_\alpha)=\{f\in L^2((0,\infty)^n,\nu_\alpha): \:\; \sum_{k\in\N^n}|\lambda_k^\alpha d_k^\alpha(f)|^2<\infty\}.$ 
Here, for every $k=(k_1,\dots,k_n)\in\N^n$ and $f\in L^2((0,\infty)^n,\nu_\alpha),$
$$
d_k^\alpha(f)=\int_{(0,\infty)^n}f(y)\mathcal{L}_k^\alpha (y)d\nu_\alpha(y)
$$
and $\lambda_k^\alpha=2\sum_{i=1}^n k_i+\sum_{i=1}^n\alpha_i +n.$ Thus, $\Delta_\alpha$ is positive and symmetric. The operator $-\Delta_\alpha$  generates a semigroup $\{\mathcal{W}_t^\alpha\}_{t>0}$ in $L^2((0,\infty)^n, \nu_\alpha),$ being, for every $t>0, $
$$
\mathcal{W}_t^\alpha(f)=\sum_{k\in\N^n}e^{-t\lambda_k^\alpha}d_k^\alpha(f)\mathcal{L}_k^\alpha, \:\: f\in L^2((0,\infty)^n,\nu_\alpha).
$$
For every $x=(x_1,\dots,x_n)\in (0,\infty)^n$, we shall denote $x^2:=(x_1^2,\dots,x_n^2).$ Thus, for every $f\in L^2((0,\infty)^n, \nu_\alpha)$ and $t>0$, we have that
\begin{equation}\label{F0}
    \mathcal{W}_t^\alpha(f)(x)=e^{-t\left( \sum_{i=1}^n\alpha_i+n\right)}\int_{(0,\infty)^n} {W}_{2t}^\alpha(x^2,y^2)f(y)d\nu_\alpha(y)\red{=:}\int_{(0,\infty)^n} \mathcal{W}_{t}^\alpha(x,y)f(y)d\nu_\alpha(y).
\end{equation}
By defining, for  $f\in L^p((0,\infty)^n, \nu_\alpha)$, $1\le p\le\infty,$ $\mathcal{W}_t^\alpha(f)$ as in \eqref{F0}, we deduce that $\{\mathcal{W}_t^\alpha\}_{t>0}$ is a symmetric diffusion semigroup in the Stein's sense in $((0,\infty)^n,\nu_\alpha).$

Let $i=1,\dots,n$. We define the Riesz transform $\mathcal{R}_{i,\alpha}$ associated with the operator $\Delta_\alpha$ by 
$$
\mathcal{R}_{i,\alpha}(f)=\sum_{k\in\N}\frac{d_k^\alpha(f)}{\sqrt{\lambda_k^\alpha}}\partial_{x_i}\mathcal{L}_k^\alpha, \quad f\in L^2((0,\infty)^n, \nu_\alpha).
$$
According to \cite[{Section} 3]{NS1}, $\mathcal{R}_{i,\alpha}$ is bounded from $L^2((0,\infty)^n,\nu_\alpha)$ into itself. By arguing as in the proof of \cite[Theorem {1.1}]{BFRS} (see also \cite{CNS}) we can see that, for every $f\in C^\infty_c((0,\infty)^n)$, the space of compactly supported functions in $(0,\infty)^n$,
$$
\mathcal{R}_{i,\alpha}(f)(x)=\lim_{\epsilon\to 0^+}\int_{\substack{|x-y|>\epsilon\\ y\in (0,\infty)^n}}\mathcal{R}_\alpha^i(x,y)f(y)d\nu_\alpha(y), \quad a.e. \:\; x\in (0,\infty)^n,
$$
where 
$$
\mathcal{R}_\alpha^i(x,y)=\frac{1}{\sqrt{\pi}}\int_0^\infty \partial_{x_i}\mathcal{W}_t^\alpha(x,y)\frac{dt}{\sqrt{t}},\quad x,y\in (0,\infty)^n, \:\;x\neq y.
$$
$L^p$-boundedness properties of $\mathcal{R}_{i,\alpha}$ can be proved as in \cite{Sa1}.

We define, for every $\epsilon>0$, the $\epsilon$-truncated Riesz transform by
$$
\mathcal{R}_{i,\alpha;\epsilon}(f)(x)=\int_{|x-y|>\epsilon}\mathcal{R}_\alpha^i(x,y)f(y)d\nu_\alpha(y), \:\; x\in (0,\infty)^n.
$$
In the following result we establish variational $L^p$ inequalities for the family $\{\mathcal{R}_{i,\alpha;\epsilon}\}_{\epsilon>0}$ of truncations with $i=1,\dots,n.$

\begin{theorem}\label{teo1.3}
Let $\alpha\in (\red{0},\infty)^n$, $\rho>2$ and $i=1,\dots,n.$ Assume that $\{\epsilon_j\}_{j\in\N}$ is a decreasing sequence in $(0,\infty)$ that converges to zero. Then, then operators
\begin{multicols}{2}
\begin{itemize}
 \item [i)] $\V_\rho(\{\mathcal{R}_{i,\alpha;\epsilon}\}_{\epsilon>0})$ 
\item[ii)] {$\OO(\{\mathcal{R}_{i,\alpha;\epsilon}\}_{\epsilon>0},\{\epsilon_j\}_{j\in\N})$ }
\item [iii)]$\lambda\Lambda(\{\mathcal{R}_{i,\alpha;\epsilon} \}_{\epsilon>0},\lambda)^{1/\rho}$, with $\lambda>0,$
\end{itemize}
\end{multicols}
 are bounded from $L^p((0,\infty)^n,{\nu}_\alpha)$ into itself, for every $1<p<\infty$, and from \newline $L^1((0,\infty)^n,{\nu}_\alpha)$ into $L^{1,\infty}((0,\infty)^n,{\nu}_\alpha)$.  Furthermore,  the $L^p$ boundedness properties of $\lambda\Lambda(\{\mathcal{R}_{i,\alpha;\epsilon} \}_{\epsilon>0},\lambda)^{1/\rho}$ are uniform in $\lambda>0.$
\end{theorem}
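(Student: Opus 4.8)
We plan to follow the local--global decomposition standard for singular integrals in the Laguerre (and Ornstein--Uhlenbeck) setting, as in \cite{Sa1} and \cite{BFRS}, together with the variational Calder\'on--Zygmund theory of Campbell, Jones, Reinhold and Wierdl \cite{CJRW2}. Since $\rho>2\ge 1$ we have $\big(\sum_j|a_j|^\rho\big)^{1/\rho}\le\sum_j|a_j|$ for every real sequence, so for any decreasing sequence $\epsilon_k<\dots<\epsilon_1$ in $(0,\infty)$,
\begin{align*}
\Big(\sum_{j=1}^{k-1}\big|\mathcal{R}_{i,\alpha;\epsilon_{j+1}}(f)(x)-\mathcal{R}_{i,\alpha;\epsilon_j}(f)(x)\big|^\rho\Big)^{1/\rho}
&\le\sum_{j=1}^{k-1}\int_{\epsilon_{j+1}<|x-y|\le\epsilon_j}|\mathcal{R}_\alpha^i(x,y)|\,|f(y)|\,d\nu_\alpha(y)\\
&\le\int_{(0,\infty)^n}|\mathcal{R}_\alpha^i(x,y)|\,|f(y)|\,d\nu_\alpha(y),
\end{align*}
and a telescoping argument gives the same bound for the oscillation operator and for $\sup_{\epsilon>0}|\mathcal{R}_{i,\alpha;\epsilon}(f)|$. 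We split $\mathcal{R}_\alpha^i=\mathcal{R}_\alpha^i\chi_N+\mathcal{R}_\alpha^i\chi_{N^c}$ with $N=\{(x,y)\in(0,\infty)^n\times(0,\infty)^n:\ |x-y|\le c\,m(x)\}$, $m(x)=\min\{1,1/|x|\}$, the usual local region. On $N^c$ the kernel is integrable off the diagonal, so by the displayed bound (and its analogues for the oscillation and jump operators) the global contribution of all three operators is dominated by the positive operator $f\mapsto\int_{N^c}|\mathcal{R}_\alpha^i(x,y)|\,|f(y)|\,d\nu_\alpha(y)$; the local part, whose kernel is singular on the diagonal, requires the genuine variational inequalities. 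Thus it suffices to show: $(a)$ this global operator is bounded on $L^p(\nu_\alpha)$ for $1<p<\infty$ and from $L^1(\nu_\alpha)$ into $L^{1,\infty}(\nu_\alpha)$; and $(b)$ the variation ($\rho>2$) and oscillation inequalities in the same ranges hold for the sharp truncations of the ``local'' operator with kernel $\mathcal{R}_\alpha^i(x,y)\chi_N(x,y)$. The jump operator needs no separate argument: one has the pointwise inequality $\lambda\,\big(\Lambda(\{\mathcal{R}_{i,\alpha;\epsilon}\}_{\epsilon>0},\lambda)(f)(x)\big)^{1/\rho}\le C\,\V_\rho(\{\mathcal{R}_{i,\alpha;\epsilon}\}_{\epsilon>0})(f)(x)$ with $C$ independent of $\lambda$, so $iii)$ follows once $i)$ is established.

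For $(a)$ we would combine the subordination formula $\mathcal{R}_\alpha^i(x,y)=\pi^{-1/2}\int_0^\infty\partial_{x_i}\mathcal{W}_t^\alpha(x,y)\,t^{-1/2}\,dt$, the identity $\mathcal{W}_t^\alpha(x,y)=e^{-t(\sum_i\alpha_i+n)}W_{2t}^\alpha(x^2,y^2)$, the Hille--Hardy formula for $W_t^\alpha$, and the asymptotics of the Bessel function $I_\nu$ ($I_\nu(z)\sim(2\pi z)^{-1/2}e^z$ as $z\to\infty$, $I_\nu(z)\sim(z/2)^\nu/\Gamma(\nu+1)$ as $z\to 0^+$), to derive ``Gaussian'' pointwise estimates for $\mathcal{R}_\alpha^i$ on $N^c$. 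These are precisely the estimates used to prove that $\mathcal{R}_{i,\alpha}$ is bounded on $L^p(\nu_\alpha)$, $1<p<\infty$, and of weak type $(1,1)$; following \cite{Sa1} (a Schur-type estimate for $1<p\le\infty$, and a further subdivision of $N^c$ to reach the $L^1\to L^{1,\infty}$ endpoint against the non-doubling measure $\nu_\alpha$) yields the boundedness asserted in $(a)$.

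For $(b)$ we would fix an admissible covering of $(0,\infty)^n$ by cubes of side comparable to $m(\cdot)$ with uniformly bounded overlap of their controlled dilates (as in \cite{Sa1,BFRS}); on each such cube $\nu_\alpha$ is doubling with a uniform constant. The kernel $\mathcal{R}_\alpha^i(x,y)\chi_N(x,y)$ then decomposes as the kernel of a classical Calder\'on--Zygmund operator (essentially the Euclidean Riesz transform) plus an error kernel $E$ with $|E(x,y)|\le C|x-y|^{1-n}\chi_N(x,y)$, the main term satisfying the usual size and regularity estimates in both variables on $N$. The error term produces a positive operator with locally integrable kernel, bounded on $L^p(\nu_\alpha)$ and of weak type $(1,1)$ by the local doubling of $\nu_\alpha$ and a covering argument; for the Calder\'on--Zygmund term, the variation and oscillation inequalities for its sharp truncations (in the full range $1<p<\infty$ and at the weak-$(1,1)$ endpoint) are provided by \cite{CJRW2}, and are transported back to $\nu_\alpha$ and summed over the covering. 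Near $\partial(0,\infty)^n$, where the weight $\prod_j x_j^{2\alpha_j+1}$ fails to be $A_p$ for all $p$, one uses instead the specific structure of $\mathcal{R}_\alpha^i$ there, again as in \cite{Sa1}.

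Since $\V_\rho$ and $\OO$ are subadditive (Minkowski's inequality in $\ell^\rho$, $\rho\ge1$, and in $\ell^2$), combining $(a)$ and $(b)$ proves $i)$ and $ii)$, and the pointwise bound recalled above then gives $iii)$ with constants uniform in $\lambda$. The main obstacle will be step $(a)$: extracting the sharp pointwise bounds for $\mathcal{R}_\alpha^i$ in the global region from the Bessel asymptotics and pushing the global positive operator to the $L^1\to L^{1,\infty}$ endpoint against the non-doubling measure $\nu_\alpha$, which amounts to carrying out the delicate part of \cite{Sa1} in the form needed here; the boundary degeneracy of $\nu_\alpha$ in step $(b)$ is a secondary technical point.
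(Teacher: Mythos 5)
Your overall architecture---local/global decomposition, bounding the global variation by a positive integral operator controlled through the Sasso estimates, and deducing the jump inequalities from the $\rho$-variation via the pointwise bound of \cite{JSW}---matches the paper. Also, the global part of your plan is sound and agrees with the paper: $\V_\rho(\{\mathcal{R}_{i,\alpha;\epsilon,glob}\}_{\epsilon>0})(f)(x)\le\int|\mathcal{R}^i_{\alpha,glob}(x,y)||f(y)|\,d\nu_\alpha(y)$, and one can then invoke \cite[Proposition 3.1 and Remark 3.2]{Sa1} plus interpolation/duality. The treatment of the local operator, however, is where your proposal diverges from the paper, and it is precisely there that it contains a genuine gap.

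The crux of your step $(b)$ is the claim that $\mathcal{R}_\alpha^i(x,y)\chi_N(x,y)$ decomposes as the kernel of the Euclidean Riesz transform plus an error $E$ with $|E(x,y)|\le C|x-y|^{1-n}\chi_N(x,y)$. This decomposition is not established in the cited references and fails as stated: the local kernel satisfies $|\mathcal{R}^i_{\alpha,loc}(x,y)|\le C/m_\alpha(B(x,|x-y|))$ and $|\nabla_{x,y}\mathcal{R}^i_{\alpha,loc}(x,y)|\le C/(|x-y|\,m_\alpha(B(x,|x-y|)))$, and near $\partial(0,\infty)^n$ the quantity $m_\alpha(B(x,|x-y|))$ behaves like $|x-y|^{n}\prod_i(x_i+|x-y|)^{2\alpha_i+1}$, not like $|x-y|^n$, so the principal singularity is not the Euclidean one and the weight $\prod_j x_j^{2\alpha_j+1}$ is not $A_p$. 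Deferring this to ``the specific structure of $\mathcal{R}_\alpha^i$ there, again as in \cite{Sa1}'' does not close the gap---\cite{Sa1} does not give you the variational estimates. Moreover, even granting the decomposition, you cannot simply import \cite{CJRW2}: their variational Calder\'on--Zygmund theorems are for Lebesgue measure, while here the relevant doubling measure is $m_\alpha$, and the range $2<q<\infty$ requires a separate argument because the variation operator is only sublinear. The paper takes a genuinely different route for the local part: (i) it first obtains the crucial $L^2(\nu_\alpha)$ bound for the \emph{full} operator $\V_\rho(\{\mathcal{R}_{i,\alpha;\epsilon}\}_{\epsilon>0})$ by conjugating, via multiplication by $e^{|x|^2/2}\prod_j x_j^{-(\alpha_j+1/2)}$, to the $\mathfrak{L}_\lambda$-Riesz transforms $\mathbb{R}_{i,\alpha;\epsilon}$ and invoking the variational $L^2$ inequality of \cite{BCT} (extended to $n$ dimensions via \cite{BCC}), then subtracts the global $L^2$ bound to obtain $L^2$ boundedness for $\V_\rho(\{\mathcal{R}_{i,\alpha;\epsilon,loc}\}_{\epsilon>0})$; (ii) it proves the weak $(1,1)$ bound with respect to $m_\alpha$ directly by carrying out the five-step variational Calder\'on--Zygmund decomposition of \cite{MTX2}, using only the size and smoothness estimates above; and (iii) it obtains $2<q<\infty$ by proving $L^\infty\to BMO((0,\infty)^n,m_\alpha)$ boundedness \`a la \cite{Mas} and interpolating; a transference lemma (the analogue of Lemma 2.3) then converts $m_\alpha$-bounds to $\nu_\alpha$-bounds. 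Without the $L^2(\nu_\alpha)$ input from $\mathfrak{L}_\lambda$-transference and the $L^\infty\to BMO$ endpoint, the scheme you outline does not complete. (A smaller, technical point: the paper's local/global decomposition is taken with respect to the region $N_\tau$ defined via $q_-(x,y,s)$ \emph{and} the auxiliary variable $s$ in the Bessel integral representation, implemented by a smooth cutoff $\varphi(x,y,s)$; the diagonal region $\{|x-y|\le c\,m(x)\}$ you use is not the one for which the key estimates of \cite{Sa1} are formulated.)
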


In the next sections we present the proofs for Theorems \ref{teo1.1}, \ref{teo1.2} and \ref{teo1.3}. Throughout this paper, $c$ and $C$ always represent positive constants that can change in each occurrence.

\section{Proof of Theorem \ref{teo1.1}}

\subsection{Proof of Theorem \ref{teo1.1} i)}\label{subs2.1}
\textcolor{white}{w}
\newline

We define $x^2=(x_1^2,\dots,x_n^2),$ when $x=(x_1,\dots,x_n)\in (0,\infty)^n.$ We consider the map $\Psi:(0,\infty)^n\to (0,\infty)^n$ defined by $\Psi(x)=x^2$, $x\in (0,\infty)^n$, and the Borel measure $\nu_\alpha$ on $(0,\infty)^n$, given by
$$\nu_\alpha(A)=\mu_\alpha(\Psi^{-1}(A)), \text{ for every Borel measurable set }A \text{ in } (0,\infty)^n.
$$
Thus, $\nu_\alpha$ is the probability measure defined by $d\nu_\alpha(x)=2^n\prod_{i=1}^n\frac{x_i^{2\alpha_i+1}e^{-x_i^2}}{\Gamma(\alpha_i+1)}dx_i$ on $(0,\infty)^n.$ It is not hard to see that the operator $S_\Psi$ defined by $S_\Psi(f)(x)=f(\Psi(x))$, $x\in (0,\infty)^n$, is an isometry from $L^q((0,\infty)^n,\mu_\alpha)$ onto $L^q((0,\infty)^n,\nu_\alpha)$ and from $L^{q,\infty}((0,\infty)^n,\mu_\alpha)$ onto $L^{q,\infty}((0,\infty)^n,\nu_\alpha)$, for every $1\le q\le \infty$. Then, an operator $\mathcal{A}$ is bounded from  $L^q((0,\infty)^n,\mu_\alpha)$ into itself (respectively, into $L^{q,\infty}((0,\infty)^n,\mu_\alpha)$) if, and only if, the operator $\hat{\mathcal{A}}$, defined by $\hat{\mathcal{A}}=S_\Psi\mathcal{A}S_\Psi^{-1}$ is bounded from $L^q((0,\infty)^n,\nu_\alpha)$  into itself (respectively, into $L^{q,\infty}((0,\infty)^n,\nu_\alpha)$).

We have that, for every $t>0,$
\begin{align*}
    {\hat{W}}_t^\alpha(f)(x)=S_\Psi[{W}_t^\alpha(S_{\Psi}^{-1}(f))](x)=\int_{(0,\infty)^n}  {\hat{W}}_t^\alpha(x,y)f(y)d\nu_\alpha(y), \quad x\in (0,\infty)^n,
\end{align*}
where
$$
\hat{W}_t^\alpha(x,y)={W_t^\alpha(x^2,y^2)},\quad x,y\in (0,\infty)^n,
$$
and then
\begin{align*}
    \hat{P}_t^\alpha(f)(x)=S_\Psi[P_t^\alpha(S_{\Psi}^{-1}(f))](x)=\int_{(0,\infty)^n}  \hat{P}_t^\alpha(x,y)f(y)d\nu_\alpha(y), \quad x\in (0,\infty)^n,
\end{align*}
being
$$
 \hat{P}_t^\alpha(x,y)={\frac{t}{2\sqrt{\pi}}\int_0^\infty \frac{e^{-\frac{t^2}{4u}}}{u^{3/2}}W_u^\alpha(x^2,y^2)du}, \quad x,y\in (0,\infty)^n \text{ and } t>0.
$$
It follows that
$$
\mathcal{V}_\rho(\{t^\beta\mathcal{D}_t^\beta \hat{P}_t^\alpha\}_{t>0})(f)=S_{\Psi}(\V_\rho\{t^\beta\mathcal{D}_t^\beta {P}_t^\alpha\}_{t>0})(S_{\Psi}^{-1}(f)).
$$
\subsubsection{$\mathcal{V}_\rho(\{t^\beta\mathcal{D}_t^\beta \hat{P}_t^\alpha\}_{t>0})$ is bounded from $L^p((0,\infty)^n,\nu_\alpha)$ into itself,  $1<p<\infty$}
\textcolor{white}{v}\newline
\vspace{0.1cm}

Since $\{W_t^\alpha\}_{t>0}$ is a Stein's symmetric diffusion semigroup on $((0,\infty)^n,\mu_\alpha)$, then  $\{\hat{W}_t^\alpha\}_{t>0}$ is a Stein's symmetric diffusion semigroup on $((0,\infty)^n,\nu_\alpha)$. Then, according to \cite[Corollary 6.1]{LeX} (see also \cite[Theorem 3.3]{JR}), the operator $\mathcal{V}_\rho(\{t^m\partial_t^m\hat{W}_t^\alpha \}_{t>0})$ is bounded from $L^p((0,\infty)^n,\nu_\alpha)$ into itself, for every $1<p<\infty$ and $m\in\N$. Moreover,  $\mathcal{V}_\rho(\{t^m\partial_t^m\hat{P}_t^\alpha \}_{t>0})$ is also bounded from $L^p((0,\infty)^n,\nu_\alpha)$ into itself, for every $1<p<\infty$ and $m\in\N$.

In the following we shall prove that, for every $L^p((0,\infty)^n,\nu_\alpha)$ $1\le p<\infty$, 
$$
\mathcal{V}_\rho(\{t^\beta\mathcal{D}_t^\beta \hat{P}_t^\alpha\}_{t>0})(f)(x)\le C\mathcal{V}_\rho(\{\hat{W}_t^\alpha \}_{t>0})(f)(x),\quad x\in (0,\infty)^n.
$$
Indeed, let $f\in L^p((0,\infty)^n,\nu_\alpha)$, with $1\le p<\infty$ \blue{and $\beta>0$}. We choose $m\in\N$ such that $m-1\le \beta<m$. We have that
\begin{align*}
    \mathcal{D}_t^\beta \hat{P}_t^\alpha(f)(x)&=\frac{e^{i\pi(m-\beta)}}{\Gamma(m-\beta)}\int_0^\infty \partial_t^m\hat{P}_{t+s}^\alpha(f)(x) s^{m-\beta-1}ds\\
    &=\frac{e^{i\pi(m-\beta)}}{\Gamma(m-\beta)}\int_0^\infty\int_0^\infty \frac{\partial_t^m\left( (t+s)e^{-\frac{(t+s)^2}{4u}}\right) }{2\sqrt{\pi}u^{3/2}}\hat{W}_{u}^\alpha(f)(x)du \;s^{m-\beta-1}ds,
\end{align*}
 for almost every $x\in(0,\infty)^n, \text{ and } t>0.$

 \blue{Observe that}
 \blue{\begin{align*}
 	\partial_t^m \left(te^{-\frac{t^2}{4u}}\right)&=-2u\partial_t^{m+1} e^{-\frac{t^2}{4u}}=-\frac{1}{2^m u^{\frac{m-1}{2}}}\partial_v^{m+1}e^{-v^2}|_{v=\frac{t}{2\sqrt{u}}}\\
 	&=\frac{(-1)^me^{-\frac{t^2}{4u}}}{2^m u^{\frac{m-1}{2}}}H_{m+1}\left(\frac{t}{2\sqrt{u}}\right) \quad t,u\in (0,\infty),
 	\end{align*}
 	where, for every $\ell\in\N$, $H_\ell$ denotes the $\ell-th$ Hermite polynomial, see \cite[p. 106, (5.5.3)]{Sz}}.

 \blue{Then, for $t>0$ and $x\in(0,\infty)^n$,
 	\begin{align*}
 	\partial_t^m\hat{P}_{t}^\alpha(f)(x)&=\frac{1}{2\sqrt{\pi}}\int_0^\infty\frac{\partial_t^m \left( te^{-\frac{t^2}{4u}}\right) }{u^{3/2}}\hat{W}_{u}^\alpha(f)(x)du\\
 	&=\frac{(-1)^m}{2^{m+1}\sqrt{\pi}}\int_0^\infty \frac{e^{-\frac{t^2}{4u}}}{ u^{\frac{m+2}{2}}}H_{m+1}\left(\frac{t}{2\sqrt{u}}\right)\hat{W}_{u}^\alpha(f)(x)du\\
 	&=\frac{(-1)^m}{t^{m}2\sqrt{\pi}}\int_0^\infty e^{-v}v^{m/2-1} H_{m+1}\left(\sqrt{v}\right)\hat{W}_{\frac{t^2}{4v}}^\alpha(f)(x)dv.
 	\end{align*}}
 Thus, we get that
 \blue{\begin{align}\label{F2}
 	t^\beta &\mathcal{D}_t^\beta \hat{P}_t^\alpha(f)(x)\nonumber\\&=\frac{(-1)^me^{i\pi(m-\beta)}t^\beta}{\Gamma(m-\beta)2\sqrt{\pi}}\int_0^\infty s^{m-\beta-1}\int_0^\infty \frac{e^{-v}v^{m/2-1}}{(t+s)^m} H_{m+1}\left(\sqrt{v}\right)\hat{W}_{\frac{(t+s)^2}{4v}}^\alpha(f)(x)dv\;ds \nonumber\\
 	&=\frac{(-1)^me^{i\pi(m-\beta)}}{\Gamma(m-\beta)2\sqrt{\pi}}\int_0^\infty z^{m-\beta-1}\int_0^\infty \frac{e^{-v}v^{m/2-1}}{(1+z)^m} H_{m+1}\left(\sqrt{v}\right)\hat{W}_{\frac{t^2(1+z)^2}{4v}}^\alpha(f)(x)dv\;dz,
 	\\&\hspace{10cm}  x\in(0,\infty)^n \text{ and } t>0.\nonumber 
 	\end{align}
 }

 \blue{Observe that the differentiations under the integral signs  above are always justified for almost every $ x\in(0,\infty)^n \text{ and } t>0.$  Indeed, we have that}
 \blue{\begin{align*}
 \int_0^\infty \frac{e^{-\frac{t^2}{4u}}}{ u^{\frac{m+2}{2}}}\left|H_{m+1}\left(\frac{t}{2\sqrt{u}}\right)\right|&|\hat{W}_{u}^\alpha(f)(x)|du\\&
 \le C\int_0^\infty \frac{e^{-\frac{t^2}{8u}}}{u^{\frac{m+2}{2}}}|\hat{W}_{u}^\alpha(f)(x)|du\\
 &\le C\sup_{u>0}|\hat{W}_{u}^\alpha(f)(x)|\frac{1}{t^m}, \quad x\in (0,\infty)^n \text{ and } t>0. 
 \end{align*}}


Since $\{ \hat{W}_t^\alpha\}_{u>0}$ is a Stein's symmetric diffusion semigroup, by Stein's maximal theorem, see \cite[Chapter 3, Section 3]{StLP}, the maximal operator
$$
\hat{W}_*^\alpha(f)=\sup_{t>0}|\hat{W}_t^\alpha(f)|
$$
is bounded from $L^q((0,\infty)^n,\nu_\alpha)$ into itself, for every $1<q\le \infty.$ Furthermore, $\hat{W}_*^\alpha$ is bounded from $L^1((0,\infty)^n,\nu_\alpha)$ into $L^{1,\infty}((0,\infty)^n,\nu_\alpha)$, see {\cite{Mu2}} for the case $n=1$ and \cite{Di} for $n\ge 2.$ Then, $\hat{W}_*^\alpha(f)(x)<\infty$ for almost all $x\in (0,\infty)^n.$

We also have that
\blue{\begin{align}\label{A1}
      \int_0^\infty \int_0^\infty \frac{e^{-\frac{(t+s)^2}{4u}}\left|H_{m+1}\left(\frac{t+s}{2\sqrt{u}}\right)\right| }{u^{\frac{m+2}{2}}}|\hat{W}_{u}^\alpha(f)(x)|du \;s^{m-\beta-1}ds&\le C\hat{W}_{*}^\alpha(f)(x)\int_0^\infty\frac{ s^{m-\beta-1}} {(t+s)^{m}}ds \nonumber\\
      &\le C \hat{W}_*^\alpha (f)(x)t^{-\beta}, \\&\qquad \qquad x\in (0,\infty)^n \text{ and } t>0.\nonumber
\end{align}}

By using Minkowski's inequality \blue{in \eqref{F2}} we get that
\begin{align*}
    \mathcal{V}_\rho&(\{t^\beta\mathcal{D}_t^\beta \hat{P}_t^\alpha\}_{t>0})(f)(x)\\
   & \red{\le C \int_0^\infty\frac{z^{m-\beta-1}}{(1+z)^m}dz\int_0^\infty e^{-v}v^{m/2-1}H_{m+1}\left(\sqrt{v}\right)dv\;\mathcal{V}_\rho(\{\hat{W}_t^\alpha\}_{t>0})(f)(x)}\\
    &\le C \mathcal{V}_\rho(\{\hat{W}_t^\alpha\}_{t>0})(f)(x), \quad \text{ a.e. } x\in (0,\infty)^n.
\end{align*}
\blue{In a similar way obtain that 
$$
 \mathcal{V}_\rho(\{ \hat{P}_t^\alpha\}_{t>0})(f)(x)\le C\; \mathcal{V}_\rho(\{\hat{W}_t^\alpha\}_{t>0})(f)(x), \quad \text{ a.e. } x\in (0,\infty)^n.
$$}
Since $\mathcal{V}_\rho(\{\hat{W}_t^\alpha\}_{t>0})$ is bounded from $L^q((0,\infty)^n,\nu_\alpha)$ into itself for every $1<q<\infty$, see \cite[Corollary 6.1]{LeX}, $  \mathcal{V}_\rho(\{t^\beta\mathcal{D}_t^\beta \hat{P}_t^\alpha\}_{t>0})$ is bounded from $L^q((0,\infty)^n,\nu_\alpha)$ into itself, for every $1<q<\infty$.
\begin{remark} 
We define the maximal operator $\hat{P}_{*,\beta}^\alpha$ by 
$$
\hat{P}_{*,\beta}^\alpha(f)=\sup_{t>0}|t^\beta \mathcal{D}_t^\beta \hat{P}_t^\alpha(f)|.
$$
$L^p$-boundedness properties of    $\;\hat{W}_*^\alpha$ (see \cite{Di,Mu2}, and \cite{StLP}) and \eqref{A1} imply that $\hat{P}_{*,\beta}^\alpha$  is bounded from $L^q((0,\infty)^n,\nu_\alpha)$ into itself, for every $1<q<\infty$ and from $L^1((0,\infty)^n,\nu_\alpha)$ into $L^{1,\infty}((0,\infty)^n,\nu_\alpha)$. These results can be seen as extensions in our Laguerre setting of \cite[Corollary 4.2]{LeX2} and \cite[Section 4]{LSj}.
\end{remark}

\subsubsection{$\mathcal{V}_\rho(\{t^\beta\mathcal{D}_t^\beta \hat{P}_t^\alpha\}_{t>0})$ is bounded from $L^1((0,\infty)^n,\nu_\alpha)$ into $L^{1,\infty}((0,\infty)^n,\nu_\alpha)$.}

\textcolor{white}{v}\newline
\vspace{0.1cm}

 In this moment we do not know how to see that the operator  $\mathcal{V}_\rho(\{\hat{W}_t^\alpha\}_{t>0})$ is bounded from $L^1((0,\infty)^n,\nu_\alpha)$ into $L^{1,\infty}((0,\infty)^n,\nu_\alpha)$, so we will obtain our result by using a procedure developed by Sasso in \cite{Sa2} and \cite{Sa1}.

Observe that 
\begin{align*}
   { \hat{W}_t^\alpha(x,y)}
    &=(1-e^{-t})^{-n}\exp\left(-\frac{e^{-t}}{1-e^{-t}}\sum_{i=1}^n(x_i^2+y_i^2) \right)\prod_{i=1}^n\Gamma(\alpha_i+1)(e^{-t/2}x_iy_i)^{-\alpha_i} \\&\cdot I_{\alpha_i}\left(\frac{2{e^{-t/2}x_i y_i}}{1-{e^{-t}}}\right),\quad
     x=(x_1,\dots,x_n), \: y=(y_1,\dots,y_n)\in (0,\infty)^n \text{ and } t>0.
\end{align*}
\red{A}ccording to \cite[(5.10.22)]{Leb},
$$
I_\nu(z)=\frac{\left(\frac{z}{2}\right)^\nu}{\sqrt{\pi}\Gamma(\nu+1/2)}\int_{-1}^1(1-s^2)^{\nu-1/2}e^{-sz} ds,\quad z\in (0,\infty), \:\:\nu>-1/2,
$$
we can write, for every $x,y\in (0,\infty)^n$ and $t>0,$
\begin{align*}
      { \hat{W}_t^\alpha(x,y)}
     &=(1-e^{-t})^{-n-\sum_{i=1}^n\alpha_i}\int_{(-1,1)^n}\exp\left(-\frac{q_{{-}}(e^{-t/2}x,y,s)}{1-e^{-t}}\right)e^{|y|^2}\Pi_\alpha (s)ds,
   \end{align*}
     where $\Pi_\alpha(s)=\prod_{i=1}^n\frac{\Gamma(\alpha_i+1)}{\Gamma(\alpha_i+1/2)\sqrt{\pi}}(1-s_i^2)^{\alpha_i-1/2}$, $s=(s_1,\dots,s_n)\in (-1,1)^n$ and $q_{\pm}(x,y,s)=\sum_{i=1}^n (x_i^2+y_i^2\pm 2x_iy_is_i)$, 
$x=(x_1,\dots,x_n), \: y=(y_1,\dots,y_n)\in (0,\infty)^n \text{ and } s=(s_1,\dots,s_n)\in (-1,1)^n.$

 Let $E_\rho$ denote the function space that consists of those complex functions $g$ defined on $(0,\infty)$ such that
$$
\|g\|_{E_\rho}=\sup_{\substack{0<t_k<\dots<t_1\\k\in\N}}\left(\sum_{j=1}^{k-1}|g(t_j)-g(t_{j+1})|^\rho\right)^{1/\rho}<\infty.
$$
It is clear that $g$ is constant if, and only if, $\|g\|_{E_\rho}=0$. By identifying the functions that differ \red{by} a constant, $(E_\rho, \| \cdot\|_{E_\rho})$ is a Banach space.

We have that
$$
\mathcal{V}_\rho(\{t^\beta\mathcal{D}_t^\beta \hat{P}_{t}^\alpha(f)\}_{t>0})(x)=\|t^\beta\mathcal{D}_t^\beta \hat{P}_{t}^\alpha(f)(x)\|_{E_\rho}, \quad x\in (0,\infty)^n.
$$
   In order to prove that the $\rho$-variation operator $\mathcal{V}_\rho(\{t^\beta\mathcal{D}_t^\beta \hat{P}_t^\alpha\}_{t>0})$ is bounded from $L^1((0,\infty)^n,\nu_\alpha)$ into $L^{1,\infty}((0,\infty)^n,\nu_\alpha)$ we shall use a procedure developed by Sasso, see \cite{Sa2} and \cite{Sa1}.
 We are going to explain the steps of our proof. The set $(0,\infty)^n\times (0,\infty)^n\times {(-1,1)}^n$ is divided in two parts. For every $\tau>0$, the local region $N_\tau$ is defined by
 $$
 N_\tau=\left\{(x,y,s)\in  (0,\infty)^n\times (0,\infty)^n\times {(-1,1)}^n: \: q_{{-}}(x,y,s)^{1/2}\le \frac{C_0\tau}{1+|x|+|y|}\right\},
 $$
   and the global region $G_\tau=((0,\infty)^n\times (0,\infty)^n\times {(-1,1)}^n)\setminus N_\tau.$ Here,  $C_0=9(n+\sum_{i=1}^n\alpha_i)$.
   
   Let $t>0.$ We decompose $\hat{P}_t^\alpha$ as follows. Assume that $\varphi$ is a smooth function in $(0,\infty)^n\times (0,\infty)^n\times (-1,1)^n$ such that $0\le \varphi\le 1$,
   $$
    \varphi(x,y,s)=\left\{\begin{array}{cc}
       1,  & (x,y,s)\in N_1,  \\
       0,  & (x,y,s)\in N_2^c,
    \end{array} \right.
   $$
   and
   \begin{equation}\label{2.0}
      |\nabla_x\varphi(x,y,s)|+ |\nabla_y\varphi(x,y,s)|\le \frac{C}{q_{{-}}(x,y,s)^{1/2}}, \quad (x,y,s)\in (0,\infty)^n\times (0,\infty)^n\times (-1,1)^n.
   \end{equation}
   We define the local part of $\hat{P}_t^\alpha$ by
   $$
   \hat{P}_{t,loc}^\alpha(f)(x)=\int_{(0,\infty)^n}\hat{P}_{t,loc}^\alpha(x,y)f(y)d\nu_\alpha(y),\quad x\in (0,\infty)^n,
   $$
   where
   $$
   \hat{P}_{t,loc}^\alpha(x,y)=\frac{t}{2\sqrt{\pi}}\int_0^\infty \frac{e^{-\frac{t^2}{4u}}}{u^{3/2}}{\hat{W}_{u,loc}^\alpha(x,y)}du,\quad x,y\in (0,\infty)^n,
   $$ and
   $$
  {\hat{W}_{u,loc}^\alpha(x,y)}=(1-e^{-u})^{-n-\sum_{i=1}^n\alpha_i}\int_{(-1,1)^n}\exp\left(-\frac{q_{{-}}(e^{-u/2}x,y,s)}{1-e^{-u}}\right)e^{|y|^2}\varphi(x,y,s)\Pi_\alpha (s)ds, 
   $$
   with $x,y\in (0,\infty)^n \text{ and } u>0.$
   
   On the other hand, the global part of $ \hat{P}_{t}^\alpha$ is defined by
   $$
   \hat{P}_{t,glob}^\alpha(f)(x)=\int_{(0,\infty)^n}\hat{P}_{t,glob}^\alpha(x,y)f(y)d\nu_\alpha(y),\quad x\in (0,\infty)^n,
   $$
   being
   $$
   \hat{P}_{t,glob}^\alpha(x,y)=\frac{t}{2\sqrt{\pi}}\int_0^\infty \frac{e^{-\frac{t^2}{4u}}}{u^{3/2}}{\hat{W}_{u,glob}^\alpha(x,y)}du,\quad x,y\in (0,\infty)^n
   $$ and
   $$
  {\hat{W}_{u,glob}^\alpha(x,y)}=(1-e^{-u})^{-n-\sum_{i=1}^n\alpha_i}\int_{(-1,1)^n}\exp\left(-\frac{q_{{-}}(e^{-u/2}x,y,s)}{1-e^{-u}}\right)e^{|y|^2}(1-\varphi(x,y,s))\Pi_\alpha (s)ds, 
   $$
   with $x,y\in (0,\infty)^n \text{ and } u>0.$

   It is clear that
   $$
   \hat{P}_t^\alpha= \hat{P}_{t,loc}^\alpha+\hat{P}_{t,glob}^\alpha, \:\: t>0.
   $$
   We have that
   $$
   \mathcal{V}_\rho(\{t^\beta\mathcal{D}_t^\beta \hat{P}_t^\alpha\}_{t>0})(f)\le \mathcal{V}_\rho(\{t^\beta\mathcal{D}_t^\beta \hat{P}_{t,loc}^\alpha\}_{t>0})(f)+\mathcal{V}_\rho(\{t^\beta\mathcal{D}_t^\beta \hat{P}_{t,glob}^\alpha\}_{t>0})(f).
   $$

   We shall now establish that the operators   $\mathcal{V}_\rho(\{t^\beta\mathcal{D}_t^\beta \hat{P}_{t,loc}^\alpha\}_{t>0})$ and $\mathcal{V}_\rho(\{t^\beta\mathcal{D}_t^\beta \hat{P}_{t,glob}^\alpha\}_{t>0})$  are bounded from $L^1((0,\infty)^n,\nu_\alpha)$ into $L^{1,\infty}((0,\infty)^n,\nu_\alpha)$. For that aim we have to prove the following Lemmas \ref{Claim1}--\ref{Claim3}.
   
   We define the measure $dm_\alpha(y)=e^{|y|^2}d\nu_\alpha(y),$ $y\in (0,\infty)^n.$

   \begin{lemma}\label{Claim1}
 There exists a positive function $K_\alpha$ defined on $(0,\infty)^n\times (0,\infty)^n\times (-1,1)^n$ such that
   \begin{small}
   \begin{align*}
    \mathcal{V}_\rho(\{t^\beta\mathcal{D}_t^\beta \hat{P}_{t,glob}^\alpha\}_{t>0}&)(f)(x)\\&\le \int_{(0,\infty)^n}\int_{(-1,1)^n}K_\alpha(x,y,s)(1-\varphi(x,y,s))\Pi_\alpha(s)ds|f(y)|dm_\alpha(y),\\ &\qquad\qquad\qquad\qquad\qquad\qquad\qquad\qquad\qquad\qquad\qquad\qquad x\in (0,\infty)^n.
   \end{align*}
   \end{small}
 Moreover,  the operator $\mathbb{K}_\alpha$, defined by
   \begin{align*}
   \mathbb{K}_\alpha(f)(x)=\int_{(0,\infty)^n}\int_{(-1,1)^n}K_\alpha(x,y,s)(1-\varphi(x,y,s))\Pi_\alpha(s)ds|f(y)|&dm_\alpha(y),\\&\:\: x\in (0,\infty)^n,
   \end{align*}
   is bounded from $L^q((0,\infty)^n,\nu_\alpha)$ into itself, $1<q<\infty$, and from $L^1((0,\infty)^n,\nu_\alpha)$ into $L^{1,\infty}((0,\infty)^n,\nu_\alpha)$. 
   \end{lemma}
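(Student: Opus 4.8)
The strategy is to obtain a pointwise kernel bound for the $\rho$-variation of the global part, and then to show the resulting positive integral operator is of strong type $(q,q)$ for $1<q<\infty$ and weak type $(1,1)$ with respect to $\nu_\alpha$. First I would fix $x,y\in(0,\infty)^n$ with $(x,y,s)\in G_1$ for $s$ in the support of $1-\varphi(x,y,\cdot)$, and work with the explicit expression
$$
t^\beta\mathcal{D}_t^\beta\hat P_{t,glob}^\alpha(f)(x)=\int_{(0,\infty)^n}\int_{(-1,1)^n}\Big(t^\beta\mathcal{D}_t^\beta\big[\text{(Poisson-subordinated Bessel kernel)}\big]\Big)(1-\varphi)\,\Pi_\alpha(s)\,ds\,f(y)\,dm_\alpha(y).
$$
Using the subordination formula and the identity for $\partial_t^m(te^{-t^2/4u})$ in terms of Hermite polynomials established above (equation \eqref{F2}), the inner $t$-dependent factor is, up to constants, an integral over $u$ of $u^{-n-\sum\alpha_i}(1-e^{-u})^{-n-\sum\alpha_i}\exp(-q_-(e^{-u/2}x,y,s)/(1-e^{-u}))$ against a Gaussian-type weight in $t^2/u$. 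The key point is that since $E_\rho$ is a Banach space, $\|t^\beta\mathcal{D}_t^\beta(\cdots)(x)\|_{E_\rho}$ can be pulled inside the $ds\,dm_\alpha(y)$ integrals by Minkowski's inequality, so it suffices to bound the $E_\rho$-norm of the scalar function $t\mapsto t^\beta\mathcal{D}_t^\beta[\hat P_t^\alpha$-kernel$](x,y,s)$; call this bound $K_\alpha(x,y,s)$.

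To estimate this $E_\rho$-norm I would use the standard device that $\|g\|_{E_\rho}\le\|g\|_{E_2}\le\big(\sup_t|g(t)|\big)^{1/2}\big(\int_0^\infty|g'(t)|\,dt\big)^{1/2}$ up to constants, or more directly $\|g\|_{E_\rho}\le\int_0^\infty|g'(t)|\,dt$ when that integral is finite; so I need pointwise bounds on the scalar kernel and on its $t$-derivative, integrated in $t$. Carrying out the $t$- and $u$-integrations (changing variables $v=t^2/(4u)$ as in the computation preceding \eqref{F2}) and using $|H_{m+1}(\sqrt v)|e^{-v}\le Ce^{-v/2}$, this reduces to an estimate of the type $K_\alpha(x,y,s)\le C\int_0^\infty(1-e^{-u})^{-n-\sum\alpha_i-\delta}\exp(-cq_-(e^{-u/2}x,y,s)/(1-e^{-u}))\,du$ for suitable small $\delta>0$, exactly the kind of kernel that appears in Sasso's work \cite{Sa1,Sa2}. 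On the global region $q_-(x,y,s)^{1/2}>C_0/(1+|x|+|y|)$, and one exploits this together with $q_-(e^{-u/2}x,y,s)\gtrsim$ either $q_-(x,y,s)$ (for $u$ bounded) or $|x|^2+|y|^2$ (for $u$ large, where $e^{-u/2}x$ is small and the cross term is negligible) to split the $u$-integral and extract enough decay.

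The main obstacle is the second assertion: that the operator $\mathbb{K}_\alpha$ with kernel $K_\alpha(x,y,s)(1-\varphi(x,y,s))$ integrated against $\Pi_\alpha(s)\,ds\,dm_\alpha(y)$ is bounded on $L^q(\nu_\alpha)$ and weak type $(1,1)$ with respect to $\nu_\alpha$. Note the measure mismatch: the kernel integrates against $dm_\alpha=e^{|y|^2}d\nu_\alpha$, which is highly non-doubling, so Calderón--Zygmund theory does not apply directly and one must instead verify, following Sasso \cite{Sa1} (see also \cite{FSS1,FSS2}) and the Forzani--Harboure--Scotto circle of ideas for the Gaussian/Laguerre global regions, that $K_\alpha$ is dominated by a kernel of a known bounded operator — typically one shows $\sup_y\int K_\alpha(x,y,s)(1-\varphi)\Pi_\alpha(s)\,ds\,d\nu_\alpha(x)<\infty$ and the symmetric estimate after absorbing the $e^{|y|^2}$ factor, giving $L^1$-$L^1$ and $L^\infty$-$L^\infty$ boundedness and hence, by interpolation, all of $1<q<\infty$; the weak type $(1,1)$ then follows because on the global region $K_\alpha$ actually enjoys the stronger integrability making $\mathbb{K}_\alpha$ bounded $L^1(\nu_\alpha)\to L^1(\nu_\alpha)$ (so a fortiori into $L^{1,\infty}$). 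The delicate technical work is the careful case analysis in $(x,y,s)$ — particularly near the boundary of $(-1,1)^n$ where $\Pi_\alpha(s)$ blows up and near $s_i\to -1$ where $q_-$ can be small even globally — and making the $u$-integral estimates uniform there, which is why the hypothesis $\alpha\in(0,\infty)^n$ is needed.
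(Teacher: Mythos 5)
Your outline captures the right general shape (pull the $E_\rho$-norm inside the integrals by Minkowski, bound it by $\int_0^\infty|\partial_t(\cdots)|\,dt$, then cite Sasso for the resulting positive kernel operator), but the central estimate as you've formulated it does not close, and the two ideas that actually make the paper's proof work are absent.

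First, if one keeps all $m$ derivatives in $t$ on the subordination factor $te^{-t^2/4u}$ (as in \eqref{F2}) and then integrates $|\partial_t(\cdots)|$ over $t\in(0,\infty)$, the $t$-integration absorbs the subordination weight and leaves an expression of the shape $\int_0^\infty g_{x,y,s}(u)\,du$ where $g_{x,y,s}(u)=(1-e^{-u})^{-n-\sum\alpha_i}\exp\bigl(-q_-(e^{-u/2}x,y,s)/(1-e^{-u})\bigr)$. But $g_{x,y,s}(u)\to e^{-|y|^2}>0$ as $u\to\infty$, so that integral diverges; the extra factor $(1-e^{-u})^{-\delta}$ you introduce tends to $1$ and changes nothing at infinity. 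The paper avoids this by first rewriting $\partial_t^m\hat P_t^\alpha=\partial_t^{m-1}\bigl(\pi^{-1/2}\int_0^\infty e^{-t^2/4u}u^{-1/2}\,\partial_u\hat W_u^\alpha\,du\bigr)$, i.e.\ transferring one $t$-derivative onto the $u$-variable via the subordination integral. After that, the $t$-integration produces $\int_0^\infty|\partial_u g_{x,y,s}(u)|\,du$, which has a chance of being finite.

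Second, even $\int_0^\infty|\partial_u g_{x,y,s}(u)|\,du$ does not obviously give the desired $K_\alpha$ without a further structural observation: the paper notes that $\partial_u g_{x,y,s}(u)=F_{x,y,s}(u)\,P_{x,y,s}(e^{-u/2})$ with $F>0$ and $P$ a polynomial of degree $\le 4$, so $\partial_u g$ changes sign at most four times and hence $\int_0^\infty|\partial_u g_{x,y,s}|\,du\le C\sup_u|g_{x,y,s}(u)|$. That supremum is then computed by the substitution $t=1-e^{-u}$ and a case analysis in the sign of $\sum_i s_ix_iy_i$, producing the explicit $K_\alpha$ (a supremum-type expression, not an integral in $u$). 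Without this bounded-variation argument your reduction is not a valid bound at all. Finally, your claim that $\mathbb K_\alpha$ is strong type $(1,1)$ ``so a fortiori'' weak type is not correct in this setting: the global operator in the Laguerre (as in the Gaussian) case is only weak type $(1,1)$, which is precisely what Sasso's \cite[Proposition~3.1]{Sa1} provides and what the paper cites; the $L^q$ ($1<q<\infty$) boundedness comes from $L^2$ via \cite[Proposition~3]{Sa2} plus interpolation/duality, not from a Schur test giving $L^1\to L^1$.
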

   Once we prove Lemma \ref{Claim1}, the same $L^p$ boundedness properties will hold for \newline $ \mathcal{V}_\rho(\{t^\beta\mathcal{D}_t^\beta \hat{P}_{t,glob}^\alpha\}_{t>0}).$
   \begin{proof}
   Suppose that $g:(0,\infty)\to\C$ is differentiable on $(0,\infty)$. Then,
   \begin{align}\label{normE}
   \|g\|_{E_\rho}&=\sup_{\substack{0<t_k<\dots<t_1\\ k\in\N}}\left(\sum_{j=1}^{k-1}|g(t_j)-g(t_{j+1})|^\rho \right)^{1/\rho}=\sup_{\substack{0<t_k<\dots<t_1 \nonumber\\ k\in\N}}\left(\sum_{j=1}^{k-1}\left|\int_{t_{j+1}}^{t_j}g'(t)dt\right|^\rho \right)^{1/\rho}\\
   &\le \sup_{\substack{0<t_k<\dots<t_1\\ k\in\N}}\sum_{j=1}^{k}\int_{t_{j+1}}^{t_j}|g'(t)|dt\le \int_0^\infty |g'(t)|dt.
   \end{align}
   Assume that $m\in\N$ such that $m-1\le \beta<m.$ We have that
   \begin{align*}
       \partial_t^m\hat{P}_t^\alpha (f)(x)&=\partial_t^m\left(\frac{t}{2\sqrt{\pi}}\int_0^\infty \frac{e^{-\frac{t^2}{4u}}}{u^{3/2}}\hat{W}_{u}^\alpha(f)(x)du\right) =\partial_t^m\left(\frac{1}{\sqrt{\pi}}\int_0^\infty \frac{e^{-v}}{v^{1/2}}\hat{W}_{\frac{t^2}{4v}}^\alpha(f)(x)dv\right)\\
       & =\partial_t^{m-1}\left(\frac{1}{\sqrt{\pi}}\int_0^\infty \frac{e^{-v}}{v^{1/2}}\frac{t}{2v}\partial_w\hat{W}_w^\alpha(f)(x)|_{w=\frac{t^2}{4v}}dv\right)\\
        &       =\partial_t^{m-1}\left(\frac{1}{\sqrt{\pi}}\int_0^\infty \frac{e^{-\frac{t^2}{4w}}}{w^{1/2}}\partial_w\hat{W}_w^\alpha(f)(x)dw\right) 
       ,\quad x\in (0,\infty)^n, \: t>0. 
   \end{align*}
   We get
   \begin{align*}
       t^\beta  \mathcal{D}_t^\beta &\hat{P}_{t,glob}^\alpha(f)(x)=\frac{e^{i\pi(m-\beta)}}{\Gamma(m-\beta)}t^\beta\int_0^\infty \partial_t^m\hat{P}_{t+s,glob}^\alpha(f)(x) s^{m-\beta-1}ds\\
    &=\frac{e^{i\pi(m-\beta)}}{\sqrt{\pi}\Gamma(m-\beta)}t^\beta\int_0^\infty\int_0^\infty \frac{\partial_t^{m-1}\left( e^{-\frac{(t+s)^2}{4u}}\right) }{u^{1/2}}\partial_u\hat{W}_{u,glob}^\alpha(f)(x)du \;s^{m-\beta-1}ds, \\&\qquad\quad \hspace{8cm}x\in (0,\infty)^n, \: t>0.
   \end{align*}
   By using Minkowski's inequality and \eqref{normE}, we deduce that
    \begin{align*}
        \mathcal{V}_\rho(&\{t^\beta\mathcal{D}_t^\beta \hat{P}_{t,glob}^\alpha\}_{t>0})(f)(x)\\&\le C\int_0^\infty\int_0^\infty\|t^\beta \partial_t^{m-1}e^{-\frac{(t+s)^2}{4u}} \|_{E_\rho}\frac{|\partial_u\hat{W}_{u,glob}^\alpha(f)(x)|}{\sqrt{u}}du\; s^{m-\beta-1}ds\\
        &\le C \int_0^\infty\int_0^\infty\int_0^\infty |\partial_t(t^\beta\partial_t^{m-1}e^{-\frac{(t+s)^2}{4u}})|dt\frac{1}{\sqrt{u}}|\partial_u\hat{W}_{u,glob}^\alpha(f)(x)|du \;s^{m-\beta-1}ds,\:\\ &\qquad  \hspace{10cm}x\in (0,\infty)^n.
    \end{align*}
    According to \cite[Lemma 4]{BCCFR}, we get
    \begin{align*}
        |\partial_t(t^\beta\partial_t^{m-1}e^{-\frac{(t+s)^2}{4u}})|&\le \beta |t^{\beta-1}\partial_t^{m-1}e^{-\frac{(t+s)^2}{4u}}|+ t^\beta|\partial_t^{m}e^{-\frac{(t+s)^2}{4u}}|\\
        &\le C( t^{\beta-1}u^{\frac{1-m}{2}}+t^\beta u^{-m/2}) e^{-\frac{(t+s)^2}{8u}}, \quad t,s,u\in (0,\infty).
    \end{align*}
By using that
$$
\int_0^\infty e^{-\frac{s^2}{8u}}s^{m-\beta-1}ds\le C u^{\frac{m-\beta}{2}}, \quad u\in (0,\infty),
$$
we get
\begin{align*}
     \mathcal{V}_\rho(\{t^\beta\mathcal{D}_t^\beta &\hat{P}_{t,glob}^\alpha\}_{t>0})(f)(x)\\&\le C\int_0^\infty\int_0^\infty|\partial_u\hat{W}_{u,glob}^\alpha(f)(x)|( t^{\beta-1}u^{-\beta/2}+t^\beta u^{-{\frac{1+\beta}{2}}})e^{-\frac{t^2}{8u}}du\;dt \\
     &\le C\int_0^\infty|\partial_u\hat{W}_{u,glob}^\alpha(f)(x)|du,\qquad x\in (0,\infty)^n.
\end{align*}
We recall that
\begin{small}
\begin{align*}
&\hat{W}_{u,glob}^\alpha(f)(x)=\int_{(0,\infty)^n} {\hat{W}_{u,glob}^\alpha(x,y)}f(y)d\nu_\alpha(y)
\\&=\int_{(0,\infty)^n}f(y)(1-e^{-u})^{-n-\sum_{i=1}^n\alpha_i}\int_{(-1,1)^n}\exp\left(-\frac{q_{{-}}(e^{-u/2}x,y,s)}{1-e^{-u}}\right)(1-\varphi(x,y,s))\Pi_\alpha (s)ds\;dm_\alpha(y),     
\end{align*}
\end{small}
where $x\in (0,\infty)^n$.

For every $x,y\in (0,\infty)^n$ and $s\in (-1,1)^n$, we define
$$
g_{x,y,s}(u)=(1-e^{-u})^{-n-\sum_{i=1}^n\alpha_i}\exp\left(-\frac{q_{{-}}(e^{-u/2}x,y,s)}{1-e^{-u}}\right), \quad u\in (0,\infty).
$$
Thus, for every $x\in (0,\infty)^n$, we obtain that 
\begin{align*}
     \mathcal{V}_\rho(\{t^\beta\mathcal{D}_t^\beta &\hat{P}_{t,glob}^\alpha\}_{t>0})(f)(x)\\&\le \int_{(0,\infty)^n}|f(y)|\int_{(-1,1)^n}\int_0^\infty|g_{x,y,s}'(u)|du\;|1-\varphi(x,y,s)|\Pi_\alpha (s)ds\;dm_\alpha(y).
\end{align*}
Let $x=(x_1,\dots,x_n),$ $y=(y_1,\dots,y_n)\in(0,\infty)^n$ and $s=(s_1,\dots,s_n)\in (-1,1)^n.$
We have that 
\begin{align*}
    &g_{x,y,s}'(u)=\big[-(n+\sum_{i=1}^n\alpha_i)(1-e^{-u})^{-n-1-\sum_{i=1}^n\alpha_i}e^{-u}\\
    &\;\:+(1-e^{-u})^{-n-1-\sum_{i=1}^n\alpha_i}(e^{-u}|x|^2{-}e^{-u/2}\sum_{i=1}^n x_iy_is_i)\\
    &\;\:+(1-e^{-u})^{-n-2-\sum_{i=1}^n\alpha_i}e^{-u}(e^{-u}|x|^2+|y|^2-2e^{-u/2}\sum_{i=1}^n x_iy_is_i)\big]\exp\left(-\frac{q_{{-}}(e^{-u/2}x,y,s)}{1-e^{-u}}\right)\\
   &= \frac{\exp\left(-\frac{q_{{-}}(e^{-u/2}x,y,s)}{1-e^{-u}}\right)}{(1-e^{-u})^{n+2+\sum_{i=1}^n\alpha_i}}\big[-(n+\sum_{i=1}^n\alpha_i)e^{-u}(1-e^{-u})+(1-e^{-u})(e^{-u}|x|^2{-}e^{-u/2}\sum_{i=1}^n x_iy_is_i)\\
    &\;\:+e^{-u}(e^{-u}|x|^2+|y|^2-2e^{-u/2}\sum_{i=1}^n x_iy_is_i)\big], \quad u\in (0,\infty).
\end{align*}

Observe that we can write
$$
g_{x,y,s}'(u)=F_{x,y,s}(u)P_{x,y,s}(e^{-u/2}),\quad u\in (0,\infty),
$$
where $F_{x,y,s}$ is a continuous positive function in $(0,\infty)$ and $P_{x,y,s}$ is a polynomial whose degree is at most four. Then, the sign of $g_{x,y,s}'(u)$ changes at most four times and we get that
\begin{equation}\label{g'}
\int_0^\infty|g_{x,y,s}'(u)|du\le C\sup_{u\in (0,\infty)}|g_{x,y,s}(u)|,
\end{equation}
where $C$ does not depend on $(x,y,s).$

We consider the function
$$
h_{x,y,s}(t)=g_{x,y,s}(-\log(1-t)), \quad t\in (0,1).
$$
We can write
\begin{align*}
h_{x,y,s}(t)&=\frac{1}{t^{n+\sum_{i=1}^n\alpha_i}}\exp\left(-\frac{(1-t)|x|^2+|y|^2-2\sum_{i=1}^nx_iy_is_i\sqrt{1-t}}{t}\right)=\frac{e^{-v(t)}}{t^{n+\sum_{i=1}^n\alpha_i}}, \:\; t\in (0,1),
\end{align*}
where $$v(t)=\frac{a}{t}-\frac{\sqrt{1-t}}{t}b-|x|^2, \quad t\in (0,1),$$  and being $a=|x|^2+|y|^2$ and $b=2\sum_{i=1}^n x_iy_is_i$.

We have that
$$
h_{x,y,s}'(t)=\frac{e^{-v(t)}}{t^{n+\sum_{i=1}^n\alpha_i}}\frac{2(a-(n+\sum_{i=1}^n\alpha_i)t)\sqrt{1-t}+b(t-2)}{2t^2\sqrt{1-t}},\quad t\in (0,1).
$$
Suppose that $\sum_{i=1}^n s_ix_iy_i\le 0$. If follows that, when $(x,y,s)\not\in N_1,$ 
\begin{align*}
a&\ge \frac{1}{2}(|x|^2+|y|^2-2\sum_{i=1}^nx_iy_is_i)=\frac{1}{2}q_{-}(x,y,s)\ge \frac{C_0^2}{2(1+|x|+|y|)^2}\\
&\ge\left\{ \begin{array}{cc}
C_0^2/8,     &  |x|+|y|<1,\\
    \frac{C_0^2}{8(|x|+|y|)^2}\ge \frac{C_0^2}{32a}, & |x|+|y|\ge 1. 
\end{array}\right.
\end{align*} 
Then, when $(x,y,s)\not\in N_1,$
$$
a\ge \left\{ \begin{array}{cc}
C_0^2/8,     &  |x|+|y|<1,\\
     \frac{C_0}{4\sqrt{2}}, & |x|+|y|\ge 1. 
\end{array}\right.
$$
Since $C_0>8(n+\sum_{i=1}^n\alpha_i),$ we get that
$$
a-(n+\sum_{i=1}^n\alpha_i)t\ge (n+\sum_{i=1}^n\alpha_i)(1-t)\ge 0, \:\:t\in (0,1),
$$
provided that  $(x,y,s)\not\in N_1.$

We conclude that $h'_{x,y,s}(t)\ge 0$, $t\in (0,1)$, when $(x,y,s)\not\in N_1$. It follows that
$$
\sup_{t\in (0,1)}|h_{x,y,s}(t)|\red{=h_{x,y,s}(1)=} e^{-|y|^2}, \:\;\text{ when }(x,y,s)\not\in N_1.
$$
Assume now that $\sum_{i=1}^n s_ix_iy_i\red{>}0$. If $(x,y,s)\not\in N_1$, we obtain that
\begin{align*}
    \sqrt{a^2-b^2}&=\sqrt{(|x|^2+|y|^2)^2-4(\sum_{i=1}^n s_ix_iy_i)^2}=\sqrt{q_+(x,y,s)q_-(x,y,s)}\\
    &\ge \left\{\begin{array}{cc}
     \frac{ C_0^2}{(1+|x|+|y|)^2}\ge \frac{C_0^2}{4}, &  |x|+|y|< 1,  \\  C_0\frac{\sqrt{|x|^2+|y|^2}}{1+|x|+|y|}\ge \frac{ C_0}{3}, &  |x|+|y|\ge 1
          \end{array}\right.
\end{align*}
Since $\min\{ C_0/3,C_0^2/4\}>2(n+\sum_{i=1}^n\alpha_i)$, we have that $a>\red{2\left(n+\sum_{i=1}^n\alpha_i\right)}$ and the equation
$$
2(a-(n+\sum_{i=1}^n\alpha_i)t)\sqrt{1-t}=b(2-t)
$$
has a unique solution in $(0,1)$, provided that $(x,y,s)\not\in N_1$.
{Indeed, let $(x,y,s)\not\in N_1$. We consider the functions
\blue{$$
g(t)=\frac{b(2-t)}{2(a-(n+\sum_{i=1}^n\alpha_i)t)}=\frac{b}{{2}(n+\sum_{i=1}^n\alpha_i)}+\frac{b(2(n+\sum_{i=1}^n\alpha_i)-a)}{{2}(n+\sum_{i=1}^n\alpha_i)(a-(n+\sum_{i=1}^n\alpha_i)t)}, \, t\in [0,1]
$$ \,\text{ and } $$h(t)=\sqrt{1-t},\quad t\in [0,1].
$$ }
We have that
$$
g'(t)=\frac{b[2(n+\sum_{i=1}^n\alpha_i)-a]}{2(a-(n+\sum_{i=1}^n\alpha_i)t)^2}< 0, \quad t\in [0,1],
$$
and
$$
g''(t)=\frac{b[2(n+\sum_{i=1}^n\alpha_i)-a](n+\sum_{i=1}^n\alpha_i)}{(a-(n+\sum_{i=1}^n\alpha_i)t)^3}< 0, \quad t\in [0,1].
$$
 Furthermore, $g(0)=\frac{b}{a}<1=h(0)$ and $g(1)=\frac{b}{2(a-(n+\sum_{i=1}^n\alpha_i))}>0=h(1)$. } \blue{We also have that $g'(t)\neq h'(t)$, $t\in (0,1)$.  Indeed, for  $t\in (0,1)$, $g'(t)=h'(t)$ if, and only if, $g_1(t)=h_1(t)$, being
 $$
 h_1(t)=b\left(a-2\left(n+\sum_{i=1}^n\alpha_i\right)\right)\sqrt{1-t} \quad \text{ and } g_1(t)=\left(a-\left(n+\sum_{i=1}^n\alpha_i\right)t\right)^2.
 $$
$h_1$ and $g_1$ are strictly decreasing in $(0,1)$ and
\begin{align*}
g_1(1)&=\left(a-\left(n+\sum_{i=1}^n\alpha_i\right)\right)^2=a\left(a-2\left(n+\sum_{i=1}^n\alpha_i\right)\right)+\left(n+\sum_{i=1}^n\alpha_i\right)^2>b\left(a-2\left(n+\sum_{i=1}^n\alpha_i\right)\right)\\
&=h_1(0).
\end{align*}
It follows that $g_1(t)\neq h_1(t)$, $t\in (0,1)$, and we conclude that $g'(t)\neq h'(t)$, $t\in (0,1)$.
Therefore, we deduce that there exists a unique $t_n\in (0,1)$ such that $g(t_n)=h(t_n)$.}
By using the procedure developed in \cite[page {850}]{MPS}, we deduce that
\begin{equation}\label{suph}
\sup_{t\in (0,1)}h_{x,y,s}(t)\le C \left(\frac{q_+(x,y,s)}{q_-(x ,y,s)}\right)^{(n+\sum_{i=1}^n\alpha_i)/2}\exp \left(-\frac{|y|^2-|x|^2}{2}-\frac{\red{\sqrt{q_+(x,y,s)q_-(x,y,s)}}}{2}\right),
\end{equation}
when $(x,y,s)\not\in N_1$. \red{Indeed, let $(x,y,s)\not\in N_1$. Observe that $v'(t)=0$ if, and only if,
$$
\frac{b(2-t)}{2a}=\sqrt{1-t}.
$$
As above, we can see that the last equation has a unique positive solution $t_0=2\frac{\sqrt{a^2-b^2}}{a+\sqrt{a^2-b^2}}$ and at that point $v(t)$ attains its minimum. It is clear that $t_n\le t_0.$ Furthermore, from \cite[page 850]{MPS} we know that $\frac{1}{4}t_0\le t_n$. 
We deduce that  
$$
h_{x,y,s}(t_0)\le h_{x,y,s}(t_n)= \sup_{t\in (0,1)}h_{x,y,s}(t)\le C\frac{e^{-v(t_0)}}{t_0^{n/2}}=C h_{x,y,s}(t_0).
$$
Thus, by using that
$$
t_0\simeq \frac{\sqrt{a^2-b^2}}{a}\simeq \frac{\sqrt{a-b}}{\sqrt{a+b}}=\sqrt{\frac{q_-(x,y,s)}{q_+(x,y,s)}}
$$
and
$$
v(t_0)=\frac{\sqrt{a^2-b^2}}{2}+\frac{a}{2}-|x|^2=\frac{|y|^2-|x|^2}{2}+\frac{\sqrt{q_+(x,y,s)q_-(x,y,s)}}{2},
$$
it is shown that \eqref{suph} holds.}

We have proved that, by taking into account our choice of $C_0$, for every $(x,y,s)\not\in N_1$,
\begin{align*}
\sup_{u\in (0,\infty)}|g_{x,y,s}(u)|&\le C\left\{ \begin{array}{cc}
e^{-|y|^2}, & \sum_{i=1}^n s_ix_iy_i<0, \\
\left(\frac{q_+(x,y,s)}{q_-(x ,y,s)}\right)^{(n+\sum_{i=1}^n\alpha_i)/2}e^{-\frac{|y|^2-|x|^2}{2}-\frac{\red{\sqrt{q_+(x,y,s)q_-(x,y,s)}}}{2}},  & \sum_{i=1}^n s_ix_iy_i\ge  0,
\end{array}
\right.\\
&\red{=:}K_\alpha(x,y,s).
\end{align*}
According to the proof of {\cite[Proposition 3.1]{Sa1}}, the operator $\mathbb{K}_\alpha$ defined by
$$
\mathbb{K}_\alpha(f)(x)=\int_{(0,\infty)^n}\int_{(-1,1)^n}K_\alpha(x,y,s) (1-\varphi(x,y,s))\Pi_\alpha(s)ds f(y)dm_\alpha(y), \:\: x\in (0,\infty)^n,
$$
is bounded from $L^1((0,\infty)^n,\nu_\alpha)$ into $L^{1,\infty}((0,\infty)^n,\nu_\alpha)$.

By \cite[Proposition 3]{Sa2}, it follows that $\mathbb{K}_\alpha$ is bounded from $L^2((0,\infty)^n,\nu_\alpha)$ into itself. The same arguments developed in the proof of  \cite[Proposition 3]{Sa2} allow us to see that $\mathbb{K}_\alpha$ is bounded from $L^q((0,\infty)^n,\nu_\alpha)$ into itself, for every $1<q<\infty.$ In order to prove this last property we can also proceed by using interpolation and duality. 
   \end{proof}

   \vspace{0.3cm}
   In the second lemma we shall study the operator $\mathcal{V}_\rho(\{t^\beta\mathcal{D}_t^\beta \hat{P}_{t,loc}^\alpha\}_{t>0}).$ We shall consider this operator as a vector-valued singular integral operator.

%
   We define, for every $t>0,$
   $$
   Q_t^\alpha(x,y,s)=\frac{t}{2\sqrt{\pi}}\int_0^\infty \frac{e^{-\frac{t^2}{4u}}}{u^{3/2}}(1-e^{-u})^{-n-\sum_{i=1}^n\alpha_i}\exp\left(-\frac{q_{{-}}(e^{-u/2}x,y,s)}{1-e^{-u}}\right)du 
  $$
   with $x,y\in (0,\infty)^n,$ $s\in (-1,1)^n.$ 
   Then,
   \begin{align*}
   t^\beta\mathcal{D}_t^\beta \hat{P}_{t,loc}^\alpha (f)(x)=\int_{(0,\infty)^n}\int_{(-1,1)^n}t^\beta\mathcal{D}_t^\beta Q_t^\alpha(x,y,s)&\varphi(x,y,s)\Pi_\alpha(s)\;ds\;f(y)dm_\alpha(y), \\&\:\;\qquad x\in (0,\infty)^n, \; t>0.
   \end{align*}
   
   \begin{lemma}\label{Claim2}
   Let $\tau>0$. For every $(x,y,s)\in N_\tau$,
   \begin{equation}\label{eq2.1}
   \|t^\beta\mathcal{D}_t^\beta Q_t^\alpha(x,y,s)\|_{E_\rho}\le C q_{{-}}(x,y,s)^{-n-\sum_{i=1}^n\alpha_i}
   \end{equation}
   and
   \begin{equation}\label{eq2.2}
   \|\nabla_xt^\beta\mathcal{D}_t^\beta Q_t^\alpha(x,y,s)\|_{E_\rho}+ \|\nabla_yt^\beta\mathcal{D}_t^\beta Q_t^\alpha(x,y,s)\|_{E_\rho}\le C  q_{{-}}(x,y,s)^{-n-1/2-\sum_{i=1}^n\alpha_i}. 
   \end{equation}
   \end{lemma}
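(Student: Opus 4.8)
The plan is to establish the two Calder\'on--Zygmund-type estimates \eqref{eq2.1}--\eqref{eq2.2} for the $E_\rho$-valued kernel $t\mapsto t^\beta\mathcal{D}_t^\beta Q_t^\alpha(x,y,s)$ by running, at the level of kernels rather than operators, the argument already used for the global part in the proof of Lemma~\ref{Claim1}. Write $N=n+\sum_{i=1}^n\alpha_i$ and, as there,
$$
g_{x,y,s}(u)=(1-e^{-u})^{-N}\exp\!\left(-\frac{q_{-}(e^{-u/2}x,y,s)}{1-e^{-u}}\right),\qquad u>0,
$$
so that $Q_t^\alpha(x,y,s)=\tfrac{t}{2\sqrt\pi}\int_0^\infty u^{-3/2}e^{-t^2/(4u)}g_{x,y,s}(u)\,du$. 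One may assume $\beta>0$ and fix $m\in\N$ with $m-1\le\beta<m$; the case $\beta=0$, where $t^\beta\mathcal{D}_t^\beta Q_t^\alpha=Q_t^\alpha$, is simpler and proceeds along the same lines with $m=1$.

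First I would reduce the $E_\rho$-norms to suprema of $g_{x,y,s}$ and of its $(x,y)$-gradients. By \eqref{normE}, $\|h\|_{E_\rho}\le\int_0^\infty|h'(t)|\,dt$ for $h\in C^1(0,\infty)$. Using the subordination identity $\partial_t^mQ_t^\alpha(x,y,s)=\partial_t^{m-1}\big(\tfrac{1}{\sqrt\pi}\int_0^\infty u^{-1/2}e^{-t^2/(4u)}\partial_ug_{x,y,s}(u)\,du\big)$ (obtained exactly as in the proof of Lemma~\ref{Claim1}), substituting it in the definition of $\mathcal{D}_t^\beta$ and differentiating once more in $t$, one gets
$$
\partial_t\big(t^\beta\mathcal{D}_t^\beta Q_t^\alpha(x,y,s)\big)=c_{m,\beta}\int_0^\infty\!\!\int_0^\infty u^{-1/2}\,\partial_t\big(t^\beta\partial_t^{m-1}e^{-(t+s)^2/(4u)}\big)\,\partial_ug_{x,y,s}(u)\,du\;s^{m-\beta-1}\,ds .
$$
Bounding $|\partial_t(t^\beta\partial_t^{m-1}e^{-(t+s)^2/(4u)})|$ by $C(t^{\beta-1}u^{(1-m)/2}+t^\beta u^{-m/2})e^{-(t+s)^2/(8u)}$ via \cite[Lemma~4]{BCCFR} and integrating in $t$ and $s$ with standard Gaussian estimates (and $(t+s)^2\ge t^2+s^2$), all powers of $u$ cancel and one obtains $\|t^\beta\mathcal{D}_t^\beta Q_t^\alpha(x,y,s)\|_{E_\rho}\le C\int_0^\infty|\partial_ug_{x,y,s}(u)|\,du$. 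Since $\nabla_x,\nabla_y$ commute with $\partial_t^{m-1}$ and with all the $t$- and $s$-integrations, the same computation with a gradient pulled through gives
$$
\|\nabla_x\,t^\beta\mathcal{D}_t^\beta Q_t^\alpha(x,y,s)\|_{E_\rho}+\|\nabla_y\,t^\beta\mathcal{D}_t^\beta Q_t^\alpha(x,y,s)\|_{E_\rho}\le C\int_0^\infty\big(|\partial_u\nabla_xg_{x,y,s}(u)|+|\partial_u\nabla_yg_{x,y,s}(u)|\big)\,du .
$$
The differentiations under the integral sign are justified as in Section~\ref{subs2.1}. Next, exactly by the reasoning that gave \eqref{g'} (write out the derivatives and substitute $\sigma=e^{-u/2}$), each of $\partial_ug_{x,y,s}$, $\partial_u\nabla_xg_{x,y,s}$, $\partial_u\nabla_yg_{x,y,s}$ is a strictly positive continuous function of $u$ times a polynomial in $\sigma$ of degree bounded uniformly in $(x,y,s)$, hence changes sign a bounded number of times; therefore $\int_0^\infty|\partial_ug_{x,y,s}(u)|\,du\le C\sup_{u>0}g_{x,y,s}(u)$ and likewise for the gradient integrals.

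It then remains to prove the local pointwise bounds $\sup_{u>0}g_{x,y,s}(u)\le Cq_{-}(x,y,s)^{-N}$ and $\sup_{u>0}(|\nabla_xg_{x,y,s}(u)|+|\nabla_yg_{x,y,s}(u)|)\le Cq_{-}(x,y,s)^{-N-1/2}$ for $(x,y,s)\in N_\tau$. With $r=1-e^{-u}\in(0,1)$ one has $g_{x,y,s}(u)=r^{-N}e^{-Q(r)/r}$, $Q(r)=q_{-}(\sqrt{1-r}\,x,y,s)$, and differentiating in $x$ or $y$ produces an extra factor $r^{-1}$ times a quantity which is $O(1+|x|+|y|)$. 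The supremum in $r$ is essentially attained at the scale $r\simeq q_{-}(x,y,s)$; on $N_\tau$ one has both $q_{-}(x,y,s)\le C_0^2\tau^2$ and, crucially, $q_{-}(x,y,s)(1+|x|+|y|)^2\le C_0^2\tau^2$, and these allow one to compare $Q(r)$ with $q_{-}(x,y,s)$ at that scale and to absorb the factor $1+|x|+|y|$ into $q_{-}(x,y,s)^{-1/2}$, yielding the two bounds. This last step is precisely the kind of local estimate carried out by Sasso in \cite{Sa1,Sa2}, and is the analogue, inside $N_\tau$, of the supremum computation with $h_{x,y,s}$ performed in the proof of Lemma~\ref{Claim1}. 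Combining the three steps proves \eqref{eq2.1} and \eqref{eq2.2}, with $C$ depending on $\tau,n,\alpha,\beta,\rho$.

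The routine portion of the argument is the first reduction: it merely repeats, at the kernel level, what was done for the global part and for $\mathcal{V}_\rho$ in Section~\ref{subs2.1}. I expect the genuine obstacle to be the last step, namely obtaining the \emph{sharp} exponents $-N$ and $-N-\tfrac12$ in the pointwise bounds for $g_{x,y,s}$ and its first-order $(x,y)$-derivatives, where one has to exploit the defining inequality of the local region $N_\tau$ carefully so that the polynomial growth in $|x|,|y|$ coming from the differentiations does not destroy the estimate.
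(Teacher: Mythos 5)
Your overall strategy matches the paper's: reduce the $E_\rho$-norm to the total variation $\int_0^\infty|\partial_u(\cdot)|\,du$ via the subordination identity and \cite[Lemma 4]{BCCFR}, bound that by the supremum using the sign-counting argument \eqref{g'}, and finish with the local inequality $q_-(e^{-u/2}x,y,s)\ge q_-(x,y,s)-C(1-e^{-u/2})$ from \cite[(2.6)]{Sa2}, which gives $\sup_u|g_{x,y,s}(u)|\le C\sup_{0<r<1}r^{-N}e^{-q_-(x,y,s)/r}\le Cq_-(x,y,s)^{-N}$ with $N=n+\sum_i\alpha_i$. For \eqref{eq2.1} this is exactly what the paper does, and your argument is correct.

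There is, however, a genuine gap in your treatment of \eqref{eq2.2}. You bound the factor produced by $\nabla_{x,y}$ crudely by $O(1+|x|+|y|)\,r^{-1}$ and then invoke the defining inequality of $N_\tau$, $(1+|x|+|y|)\le C_0\tau\,q_-(x,y,s)^{-1/2}$. But combining this with $\sup_r r^{-N-1}e^{-q_-/r}\le Cq_-^{-N-1}$ gives
$$
\sup_u\bigl(|\nabla_xg_{x,y,s}(u)|+|\nabla_yg_{x,y,s}(u)|\bigr)\le C(1+|x|+|y|)\,q_-(x,y,s)^{-N-1}\le Cq_-(x,y,s)^{-N-3/2},
$$
which is \emph{one full power of $q_-$ worse} than the required exponent $-N-\tfrac12$; since $q_-\to 0$ inside $N_\tau$, this is not an acceptable loss and the resulting kernel estimate would not yield \eqref{eq2.4}. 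The missing ingredient is a cancellation in the gradient of the exponent: since $q_-(a,b,s)\ge(a_i-b_is_i)^2$ for $|s_i|\le1$, one has $|\partial_{x_i}q_-(e^{-u/2}x,y,s)|=2e^{-u/2}|e^{-u/2}x_i-y_is_i|\le 2\sqrt{q_-(e^{-u/2}x,y,s)}$. Thus
$$
|\partial_{x_i}g_{x,y,s}(u)|\le \frac{2\sqrt{Q}}{r}\,r^{-N}e^{-Q/r}=2\,r^{-N-1/2}\Bigl(\tfrac{Q}{r}\Bigr)^{1/2}e^{-Q/r}\le Cr^{-N-1/2}e^{-Q/(2r)}
$$
with $Q=q_-(e^{-u/2}x,y,s)$, and Sasso's inequality then yields $\le Cr^{-N-1/2}e^{-q_-/(2r)}\le Cq_-(x,y,s)^{-N-1/2}$. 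This square-root cancellation, not the absorption of $1+|x|+|y|$, is what gives the correct exponent, and it is precisely what the paper exploits via $|e^{-u/2}x_i-y_is_i|\le\sqrt{q_-(e^{-u/2}x,y,s)}$.
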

   
\begin{proof}
 We are going to prove \eqref{eq2.1}.
By keeping the same notation as in the proof of Lemma \ref{Claim1}, we obtain that
\begin{align*}
 \|t^\beta\mathcal{D}_t^\beta Q_t^\alpha(x,y,s)\|_{E_\rho}&\le C \int_0^\infty|g'_{x,y,s}(u)|du\\&\le C\sup_{u\in (0,\infty)}|g_{x,y,s}(u)|, \quad x,y\in (0,\infty)^n \text{ and } s\in (-1,1)^n.
\end{align*}
\red{According to \cite[(2.6)]{Sa2},}  $q_{{-}}(e^{-u/2}x,y,s)\ge q_{{-}}(x,y,s)-C(1-e^{-u/2}),$ $(x,y,s)\in N_\tau$ and $u\in (0,\infty)$. \red{I}t follows that
\begin{align*}
|g_{x,y,s}(u)|&\le C (1-e^{-u})^{-n-\sum_{i=1}^n\alpha_i}\exp\left(-\frac{q_{{-}}(x,y,s)}{1-e^{-u}}\right)\\&\le\frac{ C}{q_-(x,y,s)^{n+\sum_{i=1}^n\alpha_i}}, \:\:(x,y,s)\in N_\tau \text{ and }u\in (0,\infty).
\end{align*}
Hence, we obtain that 
$$
\|t^\beta\mathcal{D}_t^\beta Q_t^\alpha(x,y,s)\|_{E_\rho}\le \frac{ C}{q_-(x,y,s)^{n+\sum_{i=1}^n\alpha_i}}, \:\:(x,y,s)\in N_\tau.
$$
In order to prove \eqref{eq2.2}, it is sufficient to see that
$$
\|t^\beta\mathcal{D}_t^\beta \partial_{x_i}Q_t^\alpha(x,y,s)\|_{E_\rho}\le \frac{ C}{q_-(x,y,s)^{n+1/2+\sum_{i=1}^n\alpha_i}}, \:\:(x,y,s)\in N_\tau,
$$
for every $i=1,\dots,n$.

Let $i=1,\dots,n$. We have that
$$
\partial_{x_i}Q_t^\alpha(x,y,s)=\frac{-{t}}{2\sqrt{\pi}}\int_0^\infty\frac{e^{-\frac{t^2}{4u}}}{u^{3/2}}\frac{2e^{-u}x_i-2e^{-u/2}y_is_i}{(1-e^{-u})^{n+1+\sum_{i=1}^n\alpha_i}}\exp\left(-\frac{q_{{-}}(e^{-u/2}x,y,s)}{1-e^{-u}}\right)du,
$$
$x,y\in (0,\infty)^n$, $s\in (-1,1)^n$ and $t>0$.

Since $|e^{-u/2}x_i-y_is_i|\le \sqrt{q_-(e^{-u/2}x,y,s)}$, $x=(x_1,\dots,x_n)$, $y=(y_1,\dots, y_n)\in (0,\infty)^n$, $s=(s_1,\dots,s_n)\in (-1,1)^n$ and $u\in (0,\infty)$, by proceeding as above we obtain that
\begin{align*}
\|t^\beta\mathcal{D}_t^\beta \partial_{x_i}Q_t^\alpha(x,y,s)\|_{E_\rho}&\le C \sup_{u\in (0,\infty)}\left|\frac{e^{-u}x_i -e^{-u/2}y_is_i}{(1-e^{-u})^{n+1+\sum_{i=1}^n\alpha_i}}\exp\left(-\frac{q_{{-}}(e^{-u/2}x,y,s)}{1-e^{-u}}\right)\right|\\
&\le \frac{ C}{q_-(x,y,s)^{n+1/2+\sum_{i=1}^n\alpha_i}}, \:\:(x,y,s)\in N_\tau.
 \end{align*}
	\end{proof}

   Note that the measure $\nu_\alpha$ is not doubling in $((0,\infty)^n,|\cdot|)$, where $|\cdot|$ denotes the Euclidean norm in $(0,\infty)^n$. Hence, $((0,\infty)^n,|\cdot|,\nu_\alpha)$ is not a space of homogeneous type in the Coifman-Weiss sense, see \cite{CoW}. However, $((0,\infty)^n,|\cdot|,m_\alpha)$ is a space of homogeneous type.  
   
   We define
   $$
   H_t^\alpha(x,y)=\int_{(-1,1)^n}t^\beta\mathcal{D}_t^\beta Q_t^\alpha(x,y,s)\varphi(x,y,s)\Pi_\alpha(s) ds,\quad x,y\in(0,\infty)^n, \: t>0.
   $$
   According to \cite[Lemma 3.1]{BCN}, from \eqref{eq2.1}  we deduce that 
   \begin{align}\label{eq2.3}
       \| H_t^\alpha(x,y)\|_{E_\rho}&\le \int_{(-1,1)^n}\|t^\beta\mathcal{D}_t^\beta Q_t^\alpha(x,y,s) \|_{E_\rho} |\varphi(x,y,s)|\Pi_\alpha(s)ds\nonumber\\
       &\le C \int_{(-1,1)^n}q_{{-}}(x,y,s)^{-n-\sum_{i=1}^n\alpha_i} |\varphi(x,y,s)|\Pi_\alpha(s)ds\nonumber\\
       &\le \frac{C}{m_\alpha(B(x,|x-y|))},\quad x,y\in (0,\infty)^n, \: x\neq y,
   \end{align}
   and, from \eqref{2.0} and \eqref{eq2.2},
    \begin{align}\label{eq2.4}
    \|\nabla_xH_t^\alpha(x,y)\|_{E_\rho}+ \|\nabla_yH_t^\alpha(x,y)\|_{E_\rho}&\le C \int_{(-1,1)^n}q_{{-}}(x,y,s)^{-n-1/2-\sum_{i=1}^n\alpha_i} \Pi_\alpha(s)ds\nonumber\\
       &\le \frac{C}{|x-y|m_\alpha(B(x,|x-y|))},\:\:x,y\in (0,\infty)^n, \: x\neq y.
    \end{align}

  Let $N\in\N$. If $g$ is a  complex function defined on $[\frac{1}{N},N]$, we say that $g\in E_{\rho,N}$ when
$$
\|g\|_{E_{\rho,N}}:=\sup_{\substack{\frac1{N}\le t_k<\dots<t_1\le N\\k\in\N}}\left(\sum_{j=1}^{k-1}|g(t_j)-g(t_{j+1})|^\rho\right)^{1/\rho}\red{<\infty}.
$$
  By identifying those functions whose difference is constant in $[\frac{1}{N},N]$,  $(E_{\rho,N}, \|\cdot\|_{E_{\rho,N}})$ is a Banach space. 
  
  Suppose that $F:(0,\infty)^n\to E_{\rho,N}$ is continuous. We shall prove that $F$ is strongly measurable. It is clear that $F$ is weakly continuous and hence weakly measurable. According to Petti's Theorem {(\cite[p. 131]{Yo})}, in order to prove that $F$ is strongly measurable, it is sufficient to see that $F((0,\infty)^n)\subseteq\mathcal{B}$, where $\mathcal{B}$ is a separable Banach space. Since $F$ is continuous, we have that $F((0,\infty)^n)\subseteq \overline{F(\mathbb{Q}_+^n)}^{E_{\rho,N}}$, where $\mathbb{Q_+}:=\mathbb{Q}\cap(0,\infty),$ being $\mathbb{Q}$ the set of rational numbers. We denote by $\text{span}_\mathbb{C}F(\mathbb{Q}_+^n)$ (respectively $\text{span}_{\mathbb{Q}+i\mathbb{Q}}F(\mathbb{Q}_+^n)$) the set of all finite linear combinations of elements in $F(\mathbb{Q}_+^n)$ with coefficients  in $\mathbb{C}$ (respectively, in $\mathbb{Q}+i\mathbb{Q}$). We can write 
  $$
  F((0,\infty)^n)\subset \overline{\text{span}_{\mathbb{Q}+i\mathbb{Q}}F(\mathbb{Q}_+^n)}^{E_{\rho,N}}=\overline{\text{span}_{\mathbb{C}}F(\mathbb{Q}_+^n)}^{E_{\rho,N}}.
  $$
    The set $\text{span}_{\mathbb{Q}+i\mathbb{Q}}F(\mathbb{Q}_+^n)$ is numerable. Hence, $\overline{\text{span}_{\mathbb{C}}F(\mathbb{Q}_+^n)}^{E_{\rho,N}}$ is a separable Banach subspace of $E_{\rho,N}$. We conclude that $F$ is strongly measurable.

    Let $f\in C^\infty_c((0,\infty)^n)$ and $x\in (0,\infty)^n\setminus\supp f$. We consider the function
    $$
    F_x(y)=H^\alpha(x,y)f(y), \quad y\in (0,\infty)^n,
    $$
    where 
    $$
    [H^\alpha(x,y)](t)=H_t^\alpha(x,y), \quad x,y\in (0,\infty)^n, \;\: x\neq y,\:\text{ and } t>0.
    $$
    We are going to see that $F_x$ is continuous from $(0,\infty)^n $ into $E_{\rho,N}.$ Let $y_0\in  (0,\infty)^n.$ If $y_0\not\in\supp f$, then there exists $r_0>0$ such that $F_x(y)=0,$ $y\in B(y_0,r_0)$ and $F_x$ is continuous \red{at} $y_0.$ On the other hand, we have that
    $$
    \|F_x(y)-F_x(y_0)\|_{E_{\rho,N}}\le \int_{1/N}^N|\partial_t[H_t^\alpha(x,y)f(y)-H_t^\alpha(x,y_0)f(y_0)]|dt, \:\: \:y\in (0,\infty)^n.
    $$
    Since the function $\partial_t[H_t^\alpha(x,y)f(y)]$, $(t,y)\in (0,\infty)\times (0,\infty)^n$ is continuous and $[\frac{1}{N},N]$ is compact in $(0,\infty)$, we conclude that
    $$
    \lim_{y\to y_0}F_x(y)=F_x(y_0), \text{ in } E_{\rho,N}.
    $$
    Thus, we prove that $F_x$ is continuous.

    We define
    $$
    \mathcal{H}^\alpha_N(f)(x)=\int_{(0,\infty)^n}H^\alpha(x,y)f(y)dm_\alpha(y), \quad x\not\in \supp f,
    $$
    where the integral is understood in the $E_{\rho,N}$-Bochner sense. According to \eqref{eq2.3}, the integral defining $\mathcal{H}_N^\alpha(f)(x)$ is $\| \cdot\|_{E_{\rho,N}}$-norm convergent, for every $x\not\in\supp f$.
    
    We also define 
    $$
    \mathbb{H}^\alpha (f)(x,t)=\int_{(0,\infty)^n}H_t^\alpha(x,y)f(y)dm_\alpha(y), \:\: x\in (0,\infty)^n \text{ and } t>0.
    $$
    We have that
    \begin{equation}\label{eq2.5}
    \|  \mathcal{H}_N^\alpha (f)(x)- \mathbb{H}^\alpha (f)(x,\cdot)\|_{E_{\rho,N}}=0, \quad x\not\in\supp f.
    \end{equation}
    Indeed, let $a,b\in (\frac{1}{N},N)$. We consider
    $$
    L_{a,b}(g)=g(b)-g(a),\quad g\in E_{\rho,N}.
    $$
    Since $|L_{a,b}(g)|\le \|g\|_{E_{\rho,N}},$ $g\in E_{\rho,N}$, $L_{a,b}$ is in $E_{\rho,N}'$, the dual space of $E_{\rho,N}.$ Then,
    \begin{align*}
        L_{a,b}(\mathcal{H}^\alpha_{\red{N}}(f)(x))&=\int_{(0,\infty)^n}L_{a,b}(H^\alpha(x,y))f(y)dm_\alpha(y)\\
        &=\mathbb{H}^\alpha(f)(x,\red{b})-\mathbb{H}^\alpha(f)(x,\red{a}), \quad\:\:\: x\not\in\supp f.
    \end{align*}
    Thus, \eqref{eq2.5} is proved.

    

Recall that  $  \mathcal{V}_\rho(\{t^\beta\mathcal{D}_t^\beta \hat{P}_t^\alpha\}_{t>0})$ is bounded from $L^2((0,\infty)^n,\nu_\alpha)$ into itself. By Lemma \ref{Claim1},  $  \mathcal{V}_\rho(\{t^\beta\mathcal{D}_t^\beta \hat{P}_{t,glob}^\alpha\}_{t>0})$ is also bounded from $L^2((0,\infty)^n,\nu_\alpha)$ into itself. 
Thus, 
we deduce that $\mathcal{V}_\rho(\{t^\beta\mathcal{D}_t^\beta \hat{P}_{t,loc}^\alpha\}_{t>0})$ is also bounded from $L^2((0,\infty)^n,\nu_\alpha)$ into itself.

This implies that the operator $\mathbb{H}^\alpha$ is bounded from 
$L^2((0,\infty)^n,\nu_\alpha)$ into $L^2_{E_{\rho,N}}((0,\infty)^n,\nu_\alpha)$.  Furthermore, we have that
\begin{equation}\red{\label{supHH}}
\sup_{N\in\N}\|\mathbb{H}^\alpha\|_{L^2((0,\infty)^n,\nu_\alpha)\to L^2_{E_{\rho,N}}((0,\infty)^n,\nu_\alpha)}<\infty.
\end{equation}

\begin{lemma}\label{Claim3}
Let $N\in\N$. The operator $\mathbb{H}^\alpha$ is bounded from 
$L^q((0,\infty)^n,\nu_\alpha)$ into $L^q_{E_{\rho,N}}((0,\infty)^n,\nu_\alpha)$, $1<q<\infty,$ (respectively from $L^1((0,\infty)^n,\nu_\alpha)$ into $L^{1,\infty}_{E_{\rho,N}}((0,\infty)^n,\nu_\alpha)$) if, and only if, $\mathbb{H}^\alpha$ is bounded from 
$L^q((0,\infty)^n,m_\alpha)$ into $L^q_{E_{\rho,N}}((0,\infty)^n,m_\alpha)$, $1<q<\infty,$ (respectively from $L^1((0,\infty)^n,m_\alpha)$ into $L^{1,\infty}_{E_{\rho,N}}((0,\infty)^n,m_\alpha)$).
\end{lemma}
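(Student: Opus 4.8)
The plan is to exploit the fact that the $E_{\rho,N}$-valued kernel $H_t^\alpha(x,y)$ of $\mathbb{H}^\alpha$ is supported, \emph{uniformly in $t>0$}, in a set where $x$ and $y$ are close at the admissible scale $(1+|x|)^{-1}$, and that there the densities of $\nu_\alpha$ and $m_\alpha=e^{|\cdot|^2}\nu_\alpha$ are comparable. Indeed, if $H_t^\alpha(x,y)\neq 0$ for some $t>0$, then $\varphi(x,y,s)\neq 0$ for some $s\in(-1,1)^n$, so $(x,y,s)\in N_2$; since $q_-(x,y,s)\ge |x-y|^2$ for every $s\in(-1,1)^n$, this gives
\[
|x-y|\le q_-(x,y,s)^{1/2}\le \frac{2C_0}{1+|x|+|y|}\le \frac{2C_0}{1+|x|},
\]
and hence $\bigl||x|^2-|y|^2\bigr|=|x-y|\,|x+y|\le 2C_0$, i.e. $e^{|x|^2}\simeq e^{|y|^2}$ with absolute constants on the support of $H^\alpha$.

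First I would fix a partition $\{Q_j\}_{j}$ of $(0,\infty)^n$ into \emph{admissible cubes} in the spirit of \cite{Sa1,Sa2} — cubes of side length comparable to $\min\{1,(1+\mathrm{dist}(0,Q_j))^{-1}\}$ — together with, for each $j$, a fixed dilate $Q_j^*\supset Q_j$, chosen so that: (a) if $x\in Q_j$ and $H_t^\alpha(x,y)\neq 0$ for some $t>0$, then $y\in Q_j^*$ (possible by the support estimate above, since $|x-y|\le 2C_0(1+|x|)^{-1}$); (b) on $Q_j^*$ one has $1+|x|\simeq 1+|y|$ and $e^{|x|^2}\simeq e^{|y|^2}$ with constants independent of $j$, so that $dm_\alpha\simeq 2^{c_j}\,d\nu_\alpha$ on $Q_j^*$ for a suitable constant $c_j$; (c) the family $\{Q_j^*\}_j$ has bounded overlap. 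This is the by-now-standard ``local'' geometry for the Laguerre/Ornstein--Uhlenbeck measures, and I would simply quote it.

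Granting this, the equivalence is routine bookkeeping, symmetric in $\nu_\alpha$ and $m_\alpha$. Suppose, for instance, that $\mathbb{H}^\alpha$ is bounded from $L^q((0,\infty)^n,m_\alpha)$ into $L^q_{E_{\rho,N}}((0,\infty)^n,m_\alpha)$. For $f\in L^q((0,\infty)^n,\nu_\alpha)$, property (a) gives $\mathbb{H}^\alpha(f)(x)=\mathbb{H}^\alpha(f\1_{Q_j^*})(x)$ for $x\in Q_j$, so by (b) and (c),
\begin{align*}
\|\mathbb{H}^\alpha(f)\|_{L^q_{E_{\rho,N}}(\nu_\alpha)}^q
&=\sum_j\int_{Q_j}\|\mathbb{H}^\alpha(f\1_{Q_j^*})(x)\|_{E_{\rho,N}}^q\,d\nu_\alpha(x)
\le C\sum_j 2^{-c_j}\|\mathbb{H}^\alpha(f\1_{Q_j^*})\|_{L^q_{E_{\rho,N}}(m_\alpha)}^q\\
&\le C\sum_j 2^{-c_j}\|f\1_{Q_j^*}\|_{L^q(m_\alpha)}^q
\le C\sum_j\int_{Q_j^*}|f(x)|^q\,d\nu_\alpha(x)\le C\|f\|_{L^q(\nu_\alpha)}^q.
\end{align*}
Interchanging the roles of $\nu_\alpha$ and $m_\alpha$ yields the converse implication, and running the same computation with $q=1$ and the measures of the level sets of $x\mapsto\|\mathbb{H}^\alpha(f\1_{Q_j^*})(x)\|_{E_{\rho,N}}$ in place of the $L^q$-norms gives the equivalence of the weak type $(1,1)$ estimates as well.

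The only point that is not pure bookkeeping is the uniform-in-$t$ localization in (a): it is essential that the cutoff $\varphi$, and hence the support of $H_t^\alpha(\cdot,\cdot)$, does not move with $t$ — which is precisely why $\hat P_{t,loc}^\alpha$ was built from $\hat W_{u,loc}^\alpha$ with the same $\varphi$ for every $u$. Once this is in hand, the lemma is just the standard transference between the doubling, polynomial-type measure $m_\alpha$ and the non-doubling measure $\nu_\alpha$.
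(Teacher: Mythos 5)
Your argument is correct and is essentially the same as the paper's. The paper localizes with a family of balls $\{B_j\}$ from \cite[Lemma 4]{Sa3} satisfying bounded overlap, the density comparison $\nu_\alpha(E)\simeq e^{-x_j^2}m_\alpha(E)$ on each $B_j$, and the support property that $x\in B_j$, $y\notin\delta_0 B_j$ forces $(x,y,s)\notin N_2$ (your items (a)--(c) with cubes in place of balls); it then carries out exactly your bookkeeping, written out for the weak $(1,1)$ case, and remarks that the other statements follow analogously. (One small slip: $\bigl||x|^2-|y|^2\bigr|=\bigl||x|-|y|\bigr|\,(|x|+|y|)\le|x-y|\,(|x|+|y|)$ is an inequality, not an equality as you wrote, but the resulting bound $\le 2C_0$ and the comparison $e^{|x|^2}\simeq e^{|y|^2}$ on the support are unaffected.)
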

\begin{proof}
Suppose that $\mathbb{H}^\alpha$ is bounded from $L^1(0,\infty)^n,m_\alpha)$ into $L^{1,\infty}_{E_{\rho,N}}(((0,\infty)^n,m_\alpha)$.

According to \cite[Lemma 4]{Sa3}, there exists a sequence of balls $\{B_j:=B(x_j,r_j)\}_{j\in\N}$ in $(0,\infty)^n$, being $x_j\in (0,\infty)^n$ and $r_j\in (0,1)$ for every $j\in\N$, and satisfying the following properties:
\begin{itemize}
	\item[(i)]$(0,\infty)^n=\cup_{j\in\N}B_j;$
	\item[(ii)] For every $\delta>1$, the sequence $\{\delta B_j\}_{j\in\N}$ has bounded overlap;
	\item[(iii)]There exists $C>1$ such that, for every measurable set $E\subset B_j$, $j\in\N,$  
	$$
	\frac{1}{C} e^{-x_j^2}m_\alpha(E)\le \nu_\alpha(E)\le C e^{-x_j^2} m_\alpha (E);
	$$
	\item[(iv)]There exists $\delta_0>1$ such that if $x\in B_j$ and $y\not\in \delta_0B_j$,  $j\in\N,$  then $(x,y,s)\not\in N_2$, $s\in (-1,1)^n$.
	\end{itemize}
We have that, for every $j\in\N$, 
$$
\chi_{B_j}(x)\mathbb{H}^\alpha(f)(x,t)=\chi_{B_j}(x)\mathbb{H}^\alpha(\chi_{\delta_0B_j}f)(x,t), \quad x\in (0,\infty)^n, \:\;t>0.
$$

Then, for every $f\in L^1((0,\infty)^n,\nu_\alpha)$,
\begin{align*}
\nu_\alpha(&\{x\in (0,\infty)^n: \: \; \|\mathbb{H}^\alpha(f)(x,\cdot)\|_{E_{\rho,N}}>\lambda\})\\
&
\le C\sum_{j\in\N}\nu_\alpha(\{ x\in B_j: \: \|\mathbb{H}^\alpha (f) (x,\cdot)\|_{E_{\rho,N}}>\lambda\})\\
&=C\sum_{j\in\N}\nu_\alpha(\{ x\in B_j: \: \|\mathbb{H}^\alpha (\chi_{\delta_0 B_j}f) (x,\cdot)\|_{E_{\rho,N}}>\lambda\})\\
&\le C\sum_{j\in\N}e^{-x_j^2}m_\alpha(\{ x\in B_j: \: \|\mathbb{H}^\alpha (\chi_{\delta_0 B_j}f) (x,\cdot)\|_{E_{\rho,N}}>\lambda\})\\
&\le C \sum_{j\in\N}e^{-x_j^2}\frac{\|\chi_{\delta_0B_j}f\|_{L^1((0,\infty)^n,m_\alpha)}}{\lambda}\le \frac{C}{\lambda}\int_{(0,\infty)^n}|f(x)|d\nu_\alpha(x),\quad \lambda>0.
	\end{align*}
	
	The other properties stated in Lemma \ref{Claim3} can be proved in a similar way.
	\end{proof}

From Lemma \ref{Claim3} \red{ and \eqref{supHH}}, we deduce that $\mathbb{H}^\alpha$ is bounded from 
$L^2((0,\infty)^n,m_\alpha)$ into \newline $L^2_{E_{\rho,N}}((0,\infty)^n,m_\alpha)$, for every $N\in\N$, and $$
\sup_{N\in\N}\|\mathbb{H}^\alpha\|_{L^2((0,\infty)^n,m_\alpha)\to L^2_{E_{\rho,N}}((0,\infty)^n,m_\alpha)}<\infty.
$$

Then, according to Calder\'on-Zygmund theory for vector valued singular integrals, see \cite{GLY} and \cite{RRT}, by taking into account \eqref{eq2.3}, \eqref{eq2.4} and \eqref{eq2.5}, it follows that $\mathbb{H}^\alpha$ is bounded from $L^1((0,\infty)^n,m_\alpha)$ into $L^{1,\infty}_{E_{\rho,N}}((0,\infty)^n,m_\alpha)$), for every $N\in\N$ and
$$
\sup_{N\in\N}\|\mathbb{H}^\alpha\|_{L^1((0,\infty)^n,m_\alpha)\to L^{1,\infty}_{E_{\rho,N}}((0,\infty)^n,m_\alpha)}<\infty.
$$
By using again Lemma \ref{Claim3}  we conclude that $\mathbb{H}^\alpha$ is bounded from $L^1((0,\infty)^n,\nu_\alpha)$ into $L^{1,\infty}_{E_{\rho,N}}((0,\infty)^n,\nu_\alpha)$, for every $N\in\N$ and
$$
\sup_{N\in\N}\|\mathbb{H}^\alpha\|_{L^1((0,\infty)^n,\nu_\alpha)\to L^{1,\infty}_{E_{\rho,N}}((0,\infty)^n,\nu_\alpha)}<\infty.
$$ 
By taking $N\to\infty$ and by using monotone convergence theorem we deduce that the operator $\mathcal{V}_\rho(\{t^\beta\mathcal{D}_t^\beta \hat{P}_{t,loc}^\alpha\}_{t>0})$ is bounded from $L^1((0,\infty)^n,\nu_\alpha)$ into $L^{1,\infty}((0,\infty)^n,\nu_\alpha)$). Thus, by Lemma \ref{Claim1} we deduce that $\mathcal{V}_\rho(\{t^\beta\mathcal{D}_t^\beta \hat{P}_{t}^\alpha\}_{t>0})$ is bounded from $L^1((0,\infty)^n,\nu_\alpha)$ into $L^{1,\infty}((0,\infty)^n,\nu_\alpha)$). 

\edproof
\subsection{Proof of Theorem \ref{teo1.1} ii)}

\textcolor{white}{}
\vspace{0.5cm}

    In order to prove Theorem \ref{teo1.1} for the oscillation operator $\OO(\{t^\beta \mathcal{D}_t^\beta P_t^\alpha\}_{t>0}, \{t_j\}_{j\in\N}))$, we can proceed as in the subsection 2.1 for the variation operator $\V_\rho(\{t^\beta \mathcal{D}_t^\beta P_t^\alpha\}_{t>0})$. We only comment some details.
    
    We define the space $F(\{t_j\}_{j\in\N})$ that consists of all the complex functions $g$ defined in $(0,\infty)$ such that
    $$
    \|g\|_{F(\{t_j\}_{j\in\N})}:=\left(\sum_{j=1}^\infty \sup_{t_{j+1}\le s_{j+1}<s_j\le t_j}|g(s_{j+1})-g(s_j)|^2 \right)^{1/2}<\infty.
    $$
    Note that $\|g\|_{F(\{t_j\}_{j\in\N})}=0$ if, and only if, $g$ is constant in $(0,t_1]$. By taking the corresponding quotient space $(F(\{t_j\}_{j\in\N}),\|\cdot\|_{F(\{t_j\}_{j\in\N})})$ can be seen as a Banach space.
    
    If $g\in F(\{t_j\}_{j\in\N})$ is a differentiable function in $(0,\infty)$, we have that
    \begin{align*}
    \|g\|_{F(\{t_j\}_{j\in\N})}&=\left( 
  \sum_{j=1}^\infty \sup_{t_{j+1}\le s_{j+1}<s_j\le t_j}\left|\int_{s_{j+1}}^{s_j}g'(t)dt\right|^2\right)^{1/2}\\
    &\le  \sum_{j=1}^\infty \int_{t_{j+1}}^{t_j}|g'(t)|dt\le \int_0^{t_1}|g'(t)|dt\le \int_0^\infty|g'(t)|dt.
    \end{align*}
    
    Since $\{W_t^\alpha\}_{t>0}$ is a symmetric diffusion semigroup in the Stein's sense, according to \cite[Theorem 3.3]{JR}, the oscillation operator $\OO(\{ W_t^\alpha\}_{t>0},\{t_j\}_{j\in\N})$ is bounded from $L^q((0,\infty)^n,\mu_\alpha)$ into itself and
    $$
    \sup_{\{s_j\}\downarrow 0}\|\OO(\{ W_t^\alpha\}_{t>0},\{s_j\}_{j\in\N})\|_{L^q((0,\infty)^n,\mu_\alpha)\rightarrow L^q((0,\infty)^n,\mu_\alpha)}<\infty,
    $$
    for every $1<q<\infty$.

    The rest of the proof follows as in subsection 2.1.
  
    \edproof

    \subsection{Proof of Theorem \ref{teo1.1} iii)}
    
    \textcolor{white}{}
    \vspace{0.5cm}

    According to \cite[page 6712]{JSW}, we have that
    $$
    \lambda\Lambda(\{t^\beta \mathcal{D}_t^\beta P_t^\alpha\}_{t>0},\lambda)^{1/\rho}(f)\le C\; \V_\rho (\{t^\beta \mathcal{D}_t^\beta P_t^\alpha\}_{t>0})(f), \quad \lambda>0.
    $$
    Here, $C>0$ does not depend on $\lambda>0$. Thus, the $L^p$-boundedness properties for the jump operators can be deduced from the corresponding $L^p$-boundedness properties for the variation operators.

    \edproof
    
    \subsection{Proof of Theorem \ref{teo1.1} iv)}
    
    \textcolor{white}{}
    \vspace{0.5cm}

    Let $k\in\Z$. We can write
    \begin{scriptsize}
    \begin{align*}
    &V_k(\{t^\beta \mathcal{D}_t^\beta P_t^\alpha\}_{t>0})(f)(x)\\&=\sup_{\substack{2^{-k}<t_l<t_{l-1}<\dots<t_1<2^{-k+1}\\ l\in\N}}\left(\sum_{j=1}^{l-1}\red{\Bigg|}t^\beta \mathcal{D}_t^\beta P_t^\alpha (f)(x)\big|_{t=t_j}-t^\beta \mathcal{D}_t^\beta P_t^\alpha (f)(x)\big|_{t=t_{j+1}} \red{\Bigg|}^2 \right)^{1/2}\\
    &\le\sup_{\substack{2^{-k}<t_l<t_{l-1}<\dots<t_1<2^{-k+1}\\ l\in\N}}\left(\sum_{j=1}^{l-1}{\left|\int_{t_{j+1}}^{t_j}\partial_t(t^\beta \mathcal{D}_t^\beta P_t^\alpha (f)\red{)}(x) dt\right|^2} \right)^{1/2}\le \int_{2^{-k}}^{2^{-k+1}}|\partial_t(t^\beta \mathcal{D}_t^\beta P_t^\alpha (f)\red{)}(x)|dt\\
     &\le \beta  \int_{2^{-k}}^{2^{-k+1}}t^{\beta-1} |\mathcal{D}_t^\beta P_t^\alpha (f)(x)|{dt}+\int_{2^{-k}}^{2^{-k+1}}t^{\beta}| \mathcal{D}_t^{\beta+1} P_t^\alpha (f)(x)|{dt}\\
    &\le   \left[\left(\beta \int_{2^{-k}}^{2^{-k+1}}|t^\beta \mathcal{D}_t^\beta P_t^\alpha (f)(x)|^2\frac{dt}{t}\right)^{1/2}+\left(\int_{2^{-k}}^{2^{-k+1}}|t^{\beta+1} \mathcal{D}_t^{\beta+1} P_t^\alpha (f)(x)|^2\frac{dt}{t}\right)^{1/2}\right] \left( \int_{2^{-k}}^{2^{-k+1}}\frac{dt}{t}\right)^{1/2}\\
    &\le \sqrt{\log 2}\left[\beta  \left(\int_{2^{-k}}^{2^{-k+1}}|t^\beta \mathcal{D}_t^\beta P_t^\alpha (f)(x)|^2\frac{dt}{t}\right)^{1/2} + \left(\int_{2^{-k}}^{2^{-k+1}}|t^{\beta+1} \mathcal{D}_t^{\beta+1} P_t^\alpha (f)(x)|^2\frac{dt}{t}\right)^{1/2}\right],
    \end{align*}
    \end{scriptsize}
  for every  $x\in (0,\infty)^n$.
    
    Then,
    \begin{align*}
    &\mathcal{S}_V(\{t^\beta \mathcal{D}_t^\beta P_t^\alpha\}_{t>0})(f)(x)=\left(\sum_{k\in\Z}(V_k(\{t^\beta \mathcal{D}_t^\beta P_t^\alpha\}_{t>0})(f)(x))^2\right)^{1/2}\\
    &\le C \left[ \left(\int_{0}^\infty|t^\beta \mathcal{D}_t^\beta P_t^\alpha (f)(x)|^2\frac{dt}{t}\right)^{1/2}+\left(\int_{0}^\infty|t^{\beta+1} \mathcal{D}_t^{\beta+1} P_t^\alpha
    (f)(x)|^2\frac{dt}{t}\right)^{1/2}\right], \: x\in (0,\infty)^n.
    \end{align*}
   Recall that, for every $\gamma>0$,  we define the Littlewood-Paley $g_\alpha^\gamma$-function as follows
   $$
   g_\alpha^\gamma(f)(x)=\left(\int_{0}^\infty|t^\gamma \mathcal{D}_t^\gamma P_t^\alpha (f)(x)|^2\frac{dt}{t}\right)^{1/2}, \quad x\in (0,\infty)^n.
   $$
   Therefore, the proof of Theorem \ref{teo1.1} iv) will be completed once we prove Theorem \ref{teo1.2}.

   \subsubsection{ Proof of Theorem 1.2.}
      
   \textcolor{white}{}
   \vspace{0.5cm}
   
   According to \cite[Proposition 3.1]{TZ}, if $0<\gamma_1\le \gamma_2$, there exists $C>0$ such that
   $$
   g_\alpha^{\gamma_1}(f)\le C g_\alpha^{\gamma_2}(f).
   $$
Therefore, it is sufficient to establish the $L^p$-boundedness properties for the Littlewood-Paley $g_\alpha^m$-function, for every $m\in\N$, $m\ge 1$.
   
   Let $m\in\N$, $m\ge 1$. Since $\{P_t^\alpha\}_{t>0}$ is a Stein symmetric diffusion semigroup on $((0,\infty)^n,\mu_\alpha)$, from \cite[Chapter 4, section 6, Corollary 1]{StLP} we deduce that $g_\alpha^m$ is bounded from  $L^p((0,\infty)^n,\mu_\alpha)$ into itself, for every $1<p<\infty$.

   We are going to see that $g_\alpha^m$ is bounded from $L^1((0,\infty)^n,\mu_\alpha)$ into $L^{1,\infty}((0,\infty)^n,\mu_\alpha)$. We employ a procedure similar to the one used in the proof of Theorem \ref{teo1.1} i). As in subsection \ref{subs2.1}, we consider the function $\Psi:(0,\infty)^n\to (0,\infty)^n$ defined by $\Psi(x)=x^2$, $x\in (0,\infty)^n$, and the measure $\nu_\alpha$ on $(0,\infty)^n$ given by
   $$
   \nu_\alpha(A)=\mu_\alpha(\Psi^{-1}(A)),
   $$
   for every Borel measurable set $A$ in $(0,\infty)^n$. We define
   $$
   \hat{g}_\alpha^m(f)(x)=\left( \int_0^\infty |t^m\partial_t^m \hat{P}_t^\alpha (f)(x)|^2\frac{dt}{t}\right)^{1/2}, \quad x\in (0,\infty)^n.
   $$
   Our objective is to see that $ \hat{g}_\alpha^m$ is bounded from $L^1((0,\infty)^n,\nu_\alpha)$ into $L^{1,\infty}((0,\infty)^n,\nu_\alpha)$.

   We consider the local and global operators associated with $\hat{g}_\alpha^m$ defined by
   $$
   \hat{g}_{\alpha,loc}^m(f)(x)=\left( \int_0^\infty |t^m\partial_t^m \hat{P}_{t,loc}^\alpha (f)(x)|^2\frac{dt}{t}\right)^{1/2}, \quad x\in (0,\infty)^n,
   $$
   and 
   $$
   \hat{g}_{\alpha,glob}^m(f)(x)=\left( \int_0^\infty |t^m\partial_t^m \hat{P}_{t,glob}^\alpha (f)(x)|^2\frac{dt}{t}\right)^{1/2}, \quad x\in (0,\infty)^n.
   $$
   
  It is clear that
  $$
   \hat{g}_{\alpha}^m(f)(x)\le  \hat{g}_{\alpha,loc}^m(f)(x)+ \hat{g}_{\alpha,glob}^m(f)(x), \quad x\in (0,\infty)^n.
  $$
   We will obtain our result when we prove the following Lemmas \ref{Claim2.1}-\ref{Claim2.3} (see subsection \ref{subs2.1}).
     \begin{lemma}\label{Claim2.1}
   	There exists a positive function $K_\alpha$ defined on $(0,\infty)^n\times (0,\infty)^n\times (-1,1)^n$ such that
   	\begin{small}
   		\begin{align*}
   	\hat{g}_{\alpha,glob}^m(f)(x)\le \int_{(0,\infty)^n}\int_{(-1,1)^n}K_\alpha(x,y,s)(1-\varphi(x,y,s))\Pi_\alpha(s)ds|f(y)|dm_\alpha(y),\\ \quad x\in (0,\infty)^n,
   		\end{align*}
   	\end{small}
 and satisfying that the operator $\mathbb{K}_\alpha$, defined by
   	$$
   	\mathbb{K}_\alpha(f)(x)=\int_{(0,\infty)^n}\int_{(-1,1)^n}K_\alpha(x,y,s)(1-\varphi(x,y,s))\Pi_\alpha(s)dsf(y)dm_\alpha(y),\:\: x\in (0,\infty)^n
   	$$
   	is bounded from $L^q((0,\infty)^n,\nu_\alpha)$ into itself, $1<q<\infty$, and from $L^1((0,\infty)^n,\nu_\alpha)$ into $L^{1,\infty}((0,\infty)^n,\nu_\alpha)$. 
   \end{lemma}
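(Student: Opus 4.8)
The plan is to follow the argument in the proof of Lemma \ref{Claim1}, reducing everything to the already-established mapping properties of the operator $\mathbb{K}_\alpha$ attached to the kernel $K_\alpha$ introduced there. It will suffice to dominate $\hat g_{\alpha,glob}^m(f)$ pointwise by $\mathbb{K}_\alpha(|f|)$ with \emph{the same} $K_\alpha$ and $\mathbb{K}_\alpha$ as in Lemma \ref{Claim1}: indeed, the $L^q$-boundedness ($1<q<\infty$) and the weak type $(1,1)$ of $\mathbb{K}_\alpha$ with respect to $\nu_\alpha$ were proved there (following \cite{Sa1} and \cite{Sa2}).

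The first step is to replace the square function in $t$ by a single integral over the semigroup parameter. Recall from the proof of Lemma \ref{Claim1} that
$$\partial_t^m\hat P_t^\alpha(f)(x)=\partial_t^{m-1}\!\left(\frac1{\sqrt\pi}\int_0^\infty\frac{e^{-t^2/(4w)}}{w^{1/2}}\,\partial_w\hat W_w^\alpha(f)(x)\,dw\right),$$
and the same identity holds with $\hat P_{t,glob}^\alpha$ and $\hat W_{w,glob}^\alpha$ in place of $\hat P_t^\alpha$ and $\hat W_w^\alpha$. Moving $\partial_t^{m-1}$ under the integral and using the Hermite-polynomial bound $|\partial_t^{m-1}e^{-t^2/(4w)}|\le Cw^{-(m-1)/2}e^{-t^2/(8w)}$, already employed in subsection \ref{subs2.1}, we obtain
$$|t^m\partial_t^m\hat P_{t,glob}^\alpha(f)(x)|\le C\,t^m\int_0^\infty w^{-m/2}e^{-t^2/(8w)}\,|\partial_w\hat W_{w,glob}^\alpha(f)(x)|\,dw,\qquad x\in(0,\infty)^n,\ t>0.$$
Taking the $L^2((0,\infty),dt/t)$-norm, applying Minkowski's integral inequality and the elementary identity $\|t^m e^{-t^2/(8w)}\|_{L^2((0,\infty),dt/t)}=C\,w^{m/2}$ (substitute $r=t^2/(4w)$), we deduce
$$\hat g_{\alpha,glob}^m(f)(x)\le C\int_0^\infty|\partial_w\hat W_{w,glob}^\alpha(f)(x)|\,dw,\qquad x\in(0,\infty)^n.$$

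From this point the argument is identical to the one carried out in Lemma \ref{Claim1}. Writing $\hat W_{w,glob}^\alpha(f)(x)$ in integral form and recalling the functions $g_{x,y,s}$ defined there, one has $\int_0^\infty|\partial_w\hat W_{w,glob}^\alpha(f)(x)|\,dw\le\int_{(0,\infty)^n}|f(y)|\int_{(-1,1)^n}\Big(\int_0^\infty|g_{x,y,s}'(w)|\,dw\Big)(1-\varphi(x,y,s))\Pi_\alpha(s)\,ds\,dm_\alpha(y)$; since $g_{x,y,s}'=F_{x,y,s}\,P_{x,y,s}(e^{-\cdot/2})$ with $F_{x,y,s}>0$ and $P_{x,y,s}$ a polynomial of degree at most four, one gets $\int_0^\infty|g_{x,y,s}'(w)|\,dw\le C\sup_{w>0}|g_{x,y,s}(w)|\le C K_\alpha(x,y,s)$ for $(x,y,s)\notin N_1$, with $C$ independent of $(x,y,s)$. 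Hence $\hat g_{\alpha,glob}^m(f)(x)\le\mathbb{K}_\alpha(|f|)(x)$ for the kernel $K_\alpha$ of Lemma \ref{Claim1}, and the asserted mapping properties of $\mathbb{K}_\alpha$ follow from that lemma. No genuine obstacle is expected here: the only new computation is the Minkowski-plus-Gaussian reduction of the $t$-square function, and the justification of the differentiations under the integral signs is the same as in subsection \ref{subs2.1}.
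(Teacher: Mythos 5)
Your proposal is correct and follows essentially the same path as the paper: use the subordination identity to write $\partial_t^m\hat P_{t,glob}^\alpha$ as an integral against $\partial_u\hat W_{u,glob}^\alpha$, bound the Gaussian derivative via the Hermite estimate of \cite[Lemma 4]{BCCFR}, apply Minkowski's integral inequality together with $\|t^m e^{-t^2/(8u)}\|_{L^2(dt/t)}\simeq u^{m/2}$ to reduce $\hat g_{\alpha,glob}^m(f)$ to $C\int_0^\infty|\partial_u\hat W_{u,glob}^\alpha(f)|\,du$, and then invoke the sign-change argument and the kernel $K_\alpha$ from Lemma \ref{Claim1}. The paper's proof is a compressed version of exactly these steps, so no gap to report.
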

   
   \begin{proof}
   	We can write 
   	$$
   	\partial_t^m\hat{P}_t^\alpha(f)(x)=\frac{1}{\sqrt{\pi}}\int_0^\infty \frac{\partial_t^{m-1}(e^{-\frac{t^2}{4u}})}{u^{1/2}}\partial_u\hat{W}_u^\alpha(f)(x)du 
   	,\quad x\in (0,\infty)^n, \: t>0.
   	$$
   	Minkowski's integral inequality and \cite[Lemma {4}]{BCCFR} lead to
   	\begin{align*}
   	 	\hat{g}_{\alpha,glob}^m(f)(x)&\le C \int_0^\infty \left( \int_0^\infty|t^m\partial_t^{m-1}(e^{-\frac{t^2}{4u}})|^2\frac{dt}{t}\right)^{1/2}\frac{1}{\sqrt{u}}| \partial_u\hat{W}_{u,glob}^\alpha(f)(x)|du
   	 	\\
   	 	&\le C \int_0^\infty \left( \int_0^\infty\frac{e^{-\frac{t^2}{8u}}t^{2m-1}}{u^{m-1}}{dt}\right)^{1/2}\frac{1}{\sqrt{u}}| \partial_u\hat{W}_{u,glob}^\alpha(f)(x)|du\\
   	 	&\le  C \int_0^\infty| \partial_u\hat{W}_{u,glob}^\alpha(f)(x)|du ,\quad x\in (0,\infty)^n. 
   	\end{align*}
   	We can finish the proof as in the proof of Lemma \ref{Claim1} in subsection \ref{subs2.1}.
   \end{proof}

     \begin{lemma}\label{Claim2.2}
   	Let $\tau>0$. For every $(x,y,s)\in N_\tau,$
   	\begin{equation*}
   	\|t^m\partial_t^m Q_t^\alpha(x,y,s)\|_{L^2((0,\infty),\frac{dt}{t})}\le C q_{{-}}(x,y,s)^{-n-\sum_{i=1}^n\alpha_i}
   	\end{equation*}
   	and
   	\begin{align*}
   	\|\nabla_xt^m\partial_t^m Q_t^\alpha(x,y,s)\|_{L^2((0,\infty),\frac{dt}{t})}+ \|\nabla_yt^m\partial_t^m Q_t^\alpha(x,y,s)\|_{L^2((0,\infty),\frac{dt}{t})}\\\le C  q_{{-}}(x,y,s)^{-n-1/2-\sum_{i=1}^n\alpha_i}.
   	\end{align*}
   \end{lemma}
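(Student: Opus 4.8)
The plan is to repeat the argument of Lemma~\ref{Claim2} almost verbatim, replacing the $E_\rho$-norm by the norm of $L^2((0,\infty),\frac{dt}{t})$ and quoting the pointwise bounds for the auxiliary function $g_{x,y,s}$ obtained there. Throughout $m\ge 1$ and $g_{x,y,s}$ is the function introduced in the proof of Lemma~\ref{Claim1}, so that
\begin{equation*}
Q_t^\alpha(x,y,s)=\frac{t}{2\sqrt\pi}\int_0^\infty\frac{e^{-t^2/(4u)}}{u^{3/2}}\,g_{x,y,s}(u)\,du,\qquad x,y\in(0,\infty)^n,\ s\in(-1,1)^n,\ t>0.
\end{equation*}
Thus $Q_t^\alpha$ stands to $g_{x,y,s}$ exactly as $\hat P_t^\alpha$ stands to $\hat W_u^\alpha$.

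First I would rewrite $t^m\partial_t^m Q_t^\alpha(x,y,s)$. The identity for $\partial_t^m\hat P_t^\alpha$ established in the proof of Lemma~\ref{Claim1} applies verbatim and gives $\partial_t^m Q_t^\alpha(x,y,s)=\frac1{\sqrt\pi}\partial_t^{m-1}\big(\int_0^\infty w^{-1/2}e^{-t^2/(4w)}g'_{x,y,s}(w)\,dw\big)$; carrying the remaining $m-1$ derivatives onto the Gaussian factor produces a Hermite polynomial, so that
\begin{equation*}
t^m\partial_t^m Q_t^\alpha(x,y,s)=\frac{(-1)^{m-1}}{2^{m-1}\sqrt\pi}\int_0^\infty \frac{t^m\,e^{-t^2/(4w)}}{w^{m/2}}\,H_{m-1}\!\Big(\tfrac{t}{2\sqrt w}\Big)\,g'_{x,y,s}(w)\,dw .
\end{equation*}
Applying Minkowski's integral inequality in $L^2((0,\infty),\frac{dt}{t})$ and computing, via the substitution $t=2\sqrt w\,r$, that $\big\|t^m e^{-t^2/(4w)}H_{m-1}(\tfrac{t}{2\sqrt w})\big\|_{L^2((0,\infty),\frac{dt}{t})}=c_m\,w^{m/2}$, with $c_m^2=2^{2m}\int_0^\infty r^{2m-1}H_{m-1}(r)^2e^{-2r^2}\,dr<\infty$ (the integral converges since $2m-1\ge 1$), the weight $w^{-m/2}$ is absorbed and one is left with
\begin{equation*}
\big\|t^m\partial_t^m Q_t^\alpha(x,y,s)\big\|_{L^2((0,\infty),\frac{dt}{t})}\le C\int_0^\infty |g'_{x,y,s}(w)|\,dw .
\end{equation*}
The right-hand side is precisely the quantity bounded in the proof of Lemma~\ref{Claim2}: since $g'_{x,y,s}(w)=F_{x,y,s}(w)P_{x,y,s}(e^{-w/2})$ with $F_{x,y,s}$ positive and continuous and $P_{x,y,s}$ a polynomial of degree at most four, $g'_{x,y,s}$ changes sign a bounded number of times, so $\int_0^\infty|g'_{x,y,s}(w)|\,dw\le C\sup_{w>0}|g_{x,y,s}(w)|$; combining this with $q_-(e^{-w/2}x,y,s)\ge q_-(x,y,s)-C(1-e^{-w/2})$ on $N_\tau$ (see \cite[(2.6)]{Sa2}) gives the first estimate.

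For the gradient estimates it suffices to treat, say, $\partial_{x_i}$. Because $\partial_{x_i}Q_t^\alpha(x,y,s)=\frac{t}{2\sqrt\pi}\int_0^\infty u^{-3/2}e^{-t^2/(4u)}\partial_{x_i}g_{x,y,s}(u)\,du$ is again of the same subordinated type, the same manipulation gives $\big\|t^m\partial_t^m\partial_{x_i}Q_t^\alpha(x,y,s)\big\|_{L^2((0,\infty),\frac{dt}{t})}\le C\int_0^\infty |\partial_w\partial_{x_i}g_{x,y,s}(w)|\,dw$, and likewise for $\partial_{y_i}$. As in Lemma~\ref{Claim2}, $\partial_{x_i}g_{x,y,s}(w)=-2\big(e^{-w}x_i-e^{-w/2}y_is_i\big)(1-e^{-w})^{-n-1-\sum_{i=1}^n\alpha_i}\exp\big(-q_-(e^{-w/2}x,y,s)/(1-e^{-w})\big)$ has a $w$-derivative of the form (positive continuous function)$\times$(polynomial in $e^{-w/2}$ of bounded degree), hence $\int_0^\infty|\partial_w\partial_{x_i}g_{x,y,s}(w)|\,dw\le C\sup_{w>0}|\partial_{x_i}g_{x,y,s}(w)|$; estimating this supremum through $|e^{-w/2}x_i-y_is_i|\le q_-(e^{-w/2}x,y,s)^{1/2}$ together with the lower bound for $q_-(e^{-w/2}x,y,s)$ on $N_\tau$ — exactly as in the gradient part of the proof of Lemma~\ref{Claim2} — produces the bound $C\,q_-(x,y,s)^{-n-1/2-\sum_{i=1}^n\alpha_i}$.

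The only real obstacle is bookkeeping. One must check that the $L^2((0,\infty),\frac{dt}{t})$-norm of the Hermite--Gaussian kernel is exactly of order $w^{m/2}$, so that the singular power $w^{-m/2}$ cancels; this cancellation is essential, because $\int_0^\infty|g_{x,y,s}(w)|^2\frac{dw}{w}$ itself diverges (as $w\to\infty$ the integrand tends to $e^{-2|y|^2}/w$), so a naive estimate based on the $H_{m+1}$-representation of $t^m\partial_t^m Q_t^\alpha$ would fail. One must also verify that $\partial_{x_i}g_{x,y,s}$ and $\partial_{y_i}g_{x,y,s}$ retain the structure (positive continuous)$\times$(polynomial in $e^{-w/2}$), so that the bounded-sign-change argument of Lemma~\ref{Claim2} still applies. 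Once the two estimates have been reduced to $\int_0^\infty|g'_{x,y,s}(w)|\,dw$ and $\int_0^\infty|\partial_w\partial_{x_i}g_{x,y,s}(w)|\,dw$, the conclusion is immediate from the computations already carried out in the proof of Lemma~\ref{Claim2}.
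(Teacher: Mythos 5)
Your proposal is correct and takes essentially the same approach as the paper: both reduce $\|t^m\partial_t^m Q_t^\alpha(x,y,s)\|_{L^2((0,\infty),\frac{dt}{t})}$ to $\int_0^\infty|g'_{x,y,s}(w)|\,dw$ via the subordination formula together with Minkowski's inequality and the $w^{m/2}$-scaling of the kernel (the paper bounds $t^m\partial_t^{m-1}e^{-t^2/(4w)}$ by a Gaussian via \cite[Lemma 4]{BCCFR} and integrates in $t$, while you compute the exact $L^2(dt/t)$-norm of the Hermite--Gaussian kernel by scaling — these give the same cancellation), and then both invoke the bounded-sign-change argument $\int_0^\infty|g'|\le C\sup|g|$ and the lower bound $q_-(e^{-w/2}x,y,s)\ge q_-(x,y,s)-C(1-e^{-w/2})$ on $N_\tau$, with the same structure repeated for the gradients.
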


\begin{proof}
	Let $\tau>0$. We recall that 
	 $$
	Q_t^\alpha(x,y,s)=\frac{t}{2\sqrt{\pi}}\int_0^\infty \frac{e^{-\frac{t^2}{4u}}}{u^{3/2}}(1-e^{-u})^{-n-\sum_{i=1}^n\alpha_i}\exp\left(-\frac{q_{{-}}(e^{-u/2}x,y,s)}{1-e^{-u}}\right)du, 
	$$
	with $x,y\in (0,\infty)^n$, $t\in (0,\infty)$ and $s\in (-1,1)^n.$

	Since $q_{{-}}(e^{-u/2}x,y,s)\ge q_{{-}}(x,y,s)-C(1-e^{-u/2}),$ $(x,y,s)\in N_\tau$ and $u\in (0,\infty)$ \red{(see \cite[(2.6)]{Sa2}), by using \eqref{g'}} we get that
	\begin{align*}
	 	\|&t^m\partial_t^m Q_t^\alpha(x,y,s)\|_{L^2((0,\infty),\frac{dt}{t})}\\&\le C\int_0^\infty \left( \int_0^\infty\frac{e^{-\frac{t^2}{8u}}t^{2m-1}}{u^{{m}}}{dt}\right)^{1/2}\left|\partial_u\left[(1-e^{-u})^{-n-\sum_{i=1}^n\alpha_i}\exp\left(-\frac{q_{{-}}(e^{-u/2}x,y,s)}{1-e^{-u}}\right)\right]\right|du\\
	 	&\le C \int_0^\infty\left|\partial_u\left[(1-e^{-u})^{-n-\sum_{i=1}^n\alpha_i}\exp\left(-\frac{q_{{-}}(e^{-u/2}x,y,s)}{1-e^{-u}}\right)\right]\right|du\\
	 	&\le C\sup_{u\in (0,\infty)} (1-e^{-u})^{-n-\sum_{i=1}^n\alpha_i}\exp\left(-\frac{q_{{-}}(x,y,s)}{1-e^{-u}}\right)\\&\le\frac{ C}{q_-(x,y,s)^{n+\sum_{i=1}^n\alpha_i}}, \:\:(x,y,s)\in N_\tau.
	\end{align*}

In a similar way we can prove the inequalities involving $\nabla_{x,y}$.
\end{proof}

\begin{lemma}\label{Claim2.3}
The operator $\red{	\tilde{\mathbb{H}}}^\alpha$, defined  by
	$$
\red{	\tilde{\mathbb{H}}}^\alpha (f)(x,t)=\int_{(0,\infty)^n}\red{	\tilde{\mathbb{H}}}_t^\alpha(x,y)f(y)dm_\alpha(y), \:\: x\in (0,\infty)^n \text{ and } t>0,
	$$
	where
	$$
	\red{	\tilde{\mathbb{H}}}_t^\alpha(x,y)=\int_{(-1,1)^n}t^m\partial_t^m Q_t^\alpha(x,y,s)\varphi(x,y,s)\Pi_\alpha(s) ds,\quad x,y\in(0,\infty)^n, \: t>0,
	$$
	is bounded from 
	$L^q((0,\infty)^n,\nu_\alpha)$ into $L^q_{L^2((0,\infty),\frac{dt}{t})}((0,\infty)^n,\nu_\alpha)$, $1<q<\infty,$ (respectively from $L^1((0,\infty)^n,\nu_\alpha)$ into $L^{1,\infty}_{L^2((0,\infty),\frac{dt}{t})}((0,\infty)^n,\nu_\alpha)$) if, and only if, $\red{	\tilde{\mathbb{H}}}^\alpha$ is bounded from 
	$L^q((0,\infty)^n,m_\alpha)$ into $L^q_{L^2((0,\infty),\frac{dt}{t})}((0,\infty)^n,m_\alpha)$, $1<q<\infty,$ (respectively from $L^1((0,\infty)^n,m_\alpha)$ into $L^{1,\infty}_{L^2((0,\infty),\frac{dt}{t})}((0,\infty)^n,m_\alpha)$).
\end{lemma}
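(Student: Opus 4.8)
The plan is to repeat the argument used for Lemma~\ref{Claim3}, replacing throughout the Banach space $E_{\rho,N}$ by the separable Hilbert space $L^2((0,\infty),\frac{dt}{t})$; no genuinely new idea is needed, and in fact the measurability issues become trivial because $L^2((0,\infty),\frac{dt}{t})$ is separable Hilbert. The only preliminary remark is that, for $x\neq y$, the function $t\mapsto\tilde{\mathbb{H}}_t^\alpha(x,y)$ lies in $L^2((0,\infty),\frac{dt}{t})$ and
$$
\|\tilde{\mathbb{H}}_t^\alpha(x,y)\|_{L^2((0,\infty),\frac{dt}{t})}\le\frac{C}{m_\alpha(B(x,|x-y|))},\qquad x,y\in(0,\infty)^n,\ x\neq y,
$$
which follows from the first estimate of Lemma~\ref{Claim2.2} together with \cite[Lemma~3.1]{BCN}, exactly as \eqref{eq2.3} was deduced from \eqref{eq2.1}. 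Together with the joint continuity of $(t,y)\mapsto t^m\partial_t^m Q_t^\alpha(x,y,s)f(y)$ this makes $y\mapsto\tilde{\mathbb{H}}^\alpha(f)(x,\cdot)$ a Bochner integrable $L^2((0,\infty),\frac{dt}{t})$-valued function for $x\notin\supp f$, by the reasoning already given in subsection~\ref{subs2.1}.

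First I would invoke \cite[Lemma~4]{Sa3} to obtain the covering $\{B_j=B(x_j,r_j)\}_{j\in\N}$ of $(0,\infty)^n$, with $x_j\in(0,\infty)^n$ and $r_j\in(0,1)$, satisfying the properties (i)--(iv) recalled in the proof of Lemma~\ref{Claim3}: the $B_j$ cover $(0,\infty)^n$; for every $\delta>1$ the family $\{\delta B_j\}_{j\in\N}$ has bounded overlap; there is $C>1$ with $\tfrac1C e^{-x_j^2}m_\alpha(E)\le\nu_\alpha(E)\le Ce^{-x_j^2}m_\alpha(E)$ for every measurable $E\subset B_j$; and there is $\delta_0>1$ such that $x\in B_j$ and $y\notin\delta_0 B_j$ imply $(x,y,s)\notin N_2$ for all $s\in(-1,1)^n$. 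Since $Q_t^\alpha(x,y,s)$ enters $\tilde{\mathbb{H}}_t^\alpha(x,y)$ only through the product with $\varphi(x,y,s)$, which vanishes off $N_2$, property (iv) yields the locality identity
$$
\chi_{B_j}(x)\,\tilde{\mathbb{H}}^\alpha(f)(x,t)=\chi_{B_j}(x)\,\tilde{\mathbb{H}}^\alpha(\chi_{\delta_0 B_j}f)(x,t),\qquad x\in(0,\infty)^n,\ t>0,\ j\in\N.
$$

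Granting the weak type $(1,1)$ bound for $\tilde{\mathbb{H}}^\alpha$ with respect to $m_\alpha$, I would then estimate, for $f\in L^1((0,\infty)^n,\nu_\alpha)$ and $\lambda>0$,
\begin{align*}
\nu_\alpha\big(\{\,\|\tilde{\mathbb{H}}^\alpha(f)(x,\cdot)\|_{L^2((0,\infty),\frac{dt}{t})}>\lambda\,\}\big)
&\le C\sum_{j\in\N}\nu_\alpha\big(\{x\in B_j:\|\tilde{\mathbb{H}}^\alpha(\chi_{\delta_0 B_j}f)(x,\cdot)\|_{L^2((0,\infty),\frac{dt}{t})}>\lambda\}\big)\\
&\le C\sum_{j\in\N}e^{-x_j^2}m_\alpha\big(\{x\in B_j:\|\tilde{\mathbb{H}}^\alpha(\chi_{\delta_0 B_j}f)(x,\cdot)\|_{L^2((0,\infty),\frac{dt}{t})}>\lambda\}\big)\\
&\le \frac{C}{\lambda}\sum_{j\in\N}e^{-x_j^2}\|\chi_{\delta_0 B_j}f\|_{L^1((0,\infty)^n,m_\alpha)}\le\frac{C}{\lambda}\int_{(0,\infty)^n}|f(x)|\,d\nu_\alpha(x),
\end{align*}
where the steps use (i) and the bounded overlap of $\{B_j\}_{j\in\N}$, property (iii), the assumed $m_\alpha$-weak-type inequality, and finally (iii) again applied on $\delta_0 B_j$ together with the bounded overlap of $\{\delta_0 B_j\}_{j\in\N}$. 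The converse implication and the strong $L^q$ ($1<q<\infty$) equivalences follow by the same scheme with distributional estimates replaced by norm estimates, exactly as in the proof of Lemma~\ref{Claim3}. I do not expect a genuine obstacle: the proof is a direct transcription, and the only mild point of care is that the measure comparison (iii) must be invoked on $\delta_0 B_j$ rather than on $B_j$, which is legitimate precisely because $\{\delta_0 B_j\}_{j\in\N}$ also has bounded overlap.
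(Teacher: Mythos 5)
Your proposal reproduces, with $E_{\rho,N}$ replaced by the separable Hilbert space $L^2((0,\infty),\frac{dt}{t})$, exactly the argument the paper itself gives: the paper's entire proof of this lemma reads ``We can proceed as in the proof of Lemma \ref{Claim3} in the subsection \ref{subs2.1}.'' One minor imprecision in your closing remark: the validity of the density comparison (iii) on $\delta_0 B_j$ is not a consequence of the bounded overlap of $\{\delta_0 B_j\}_j$ — it follows from $r_j<1$ and the slow variation of $e^{-|y|^2}$ on balls of uniformly bounded radius, while bounded overlap is the separate fact that lets you sum $\sum_j\int_{\delta_0 B_j}|f|\,d\nu_\alpha\lesssim\int|f|\,d\nu_\alpha$; both are used in the final step, but for different reasons.
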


   \begin{proof}
   	We can proceed as in the proof of Lemma \ref{Claim3} in the subsection \ref{subs2.1}.
   	\end{proof}
 
By combining Lemmas \ref{Claim2.1}-\ref{Claim2.3} and applying vector-valued Calder\'on-Zygmund theory for the local operator, the proof of Theorem \ref{teo1.2} is finished.
 \edproof

    \section{Proof of Theorem \ref{teo1.3}}\label{sec3}
    
 Let $i=1,\dots,n$. We consider the truncated integral $\mathcal{R}_{i,\alpha;\epsilon}$ defined by
 $$
 \mathcal{R}_{i,\alpha;\epsilon}(f)(x)=\int_{|x-y|>\epsilon} \mathcal{R}_\alpha^{i}(x,y)f(y)d{\nu}_\alpha(y),\quad x\in (0,\infty)^n, \text{    where } \epsilon>0.   
 $$

 We define the local part $\mathcal{R}_{i,\alpha;\epsilon,loc}$ of $\mathcal{R}_{i,\alpha;\epsilon}$ by
   $$
  \mathcal{R}_{i,\alpha;\epsilon,loc}(f)(x)=\int_{|x-y|>\epsilon} \mathcal{R}_{\alpha,loc}^{i}(x,y)f(y)d{\nu}_\alpha(y),\quad x\in (0,\infty)^n, \:\:\epsilon>0,
  $$  
    where 
    $$
    \mathcal{R}_{\alpha,loc}^{i}(x,y)=\frac{1}{\sqrt{\pi}}\int_0^\infty \partial_{x_i}W_{2t,loc}^\alpha (x^2,y^2)e^{-t({\sum_{i=1}^n\alpha_i+n})}\frac{dt}{\sqrt{t}}, \quad x,y\in (0,\infty)^n, \: x\neq y.
    $$
    The global part $  \mathcal{R}_{i,\alpha;\epsilon,glob}$ is defined by
    $$
      \mathcal{R}_{i,\alpha;\epsilon,glob}(f)=  \mathcal{R}_{i,\alpha;\epsilon}(f)-  \mathcal{R}_{i,\alpha;\epsilon,loc}(f), \quad \epsilon>0.
    $$
    We have that 
    $$
    \V_\rho(\{  \mathcal{R}_{i,\alpha;\epsilon}\}_{\epsilon>0})(f)\le     \V_\rho(\{  \mathcal{R}_{i,\alpha;\epsilon,loc}\}_{\epsilon>0})(f)+    \V_\rho(\{  \mathcal{R}_{i,\alpha;\epsilon,glob}\}_{\epsilon>0})(f).
    $$
    \subsection{$L^p$-boundedness properties of $\V_\rho(\{  \mathcal{R}_{i,\alpha;\epsilon,glob}\}_{\epsilon>0})$}
    
    \textcolor{white}{v}\vspace{0.2cm}

  Let $0<\epsilon_k<\epsilon_{k-1}<\dots<\epsilon_1$, with $k\in\N$. We get
  \begin{align*}
  \big(\sum_{j=1}^{k-1}|\mathcal{R}_{i,\alpha;\epsilon_j,glob}(f)(x)&-\mathcal{R}_{i,\alpha;\epsilon_{j+1},glob}(f)(x)|^\rho\big)^{1/\rho}\\
  &= \left(\sum_{j=1}^{k-1}\left|\int_{\epsilon_{j+1}<|x-y|<\epsilon_{{j}}}\mathcal{R}^i_{\alpha,glob}(x,y)f(y)d\nu_\alpha(y)\right|^\rho\right)^{1/\rho}\\
  &\le \int_{\epsilon_{k}<|x-y|<\epsilon_1}|\mathcal{R}^i_{\alpha,glob}(x,y)||f(y)|d\nu_\alpha(y),\quad x\in (0,\infty)^n.
  \end{align*}
    Here, $\mathcal{R}^i_{\alpha,glob}(x,y)=\mathcal{R}^i_{\alpha}(x,y)-\mathcal{R}^i_{\alpha,loc}(x,y)$, $x,y\in (0,\infty)^n$.

Then,
    $$
     \V_\rho(\{  \mathcal{R}_{i,\alpha;\epsilon,glob}\}_{\epsilon>0})(f)(x)\le \int_{{(0,\infty)^n}}|\mathcal{R}^i_{\alpha,glob}(x,y)||f(y)|d\nu_\alpha(y),\quad x\in (0,\infty)^n.
    $$
    According to the proof of \cite[Proposition 3.1]{Sa1}, $ \V_\rho(\{  \mathcal{R}_{i,\alpha;\epsilon,glob}\}_{\epsilon>0})$ is bounded from $L^1((0,\infty)^n,\nu_\alpha)$ into $L^{1,\infty}((0,\infty)^n,\nu_\alpha)$ and by \cite[Remark 3.2]{Sa1} (see also \cite[Proposition 3]{Sa2}),  $ \V_\rho(\{  \mathcal{R}_{i,\alpha;\epsilon,glob}\}_{\epsilon>0})$ is bounded from $L^2((0,\infty)^n,\nu_\alpha)$ into itself. By interpolation we deduce that  $ \V_\rho(\{  \mathcal{R}_{i,\alpha;\epsilon,glob}\}_{\epsilon>0})$ is bounded from $L^q((0,\infty)^n,\nu_\alpha)$ into itself, for every $1<q<2$.
    
    Actually, by defining 
    \begin{align*}
    K_\alpha(x,y,s)=\left\{ \begin{array}{cc}
  1, & \sum_{i=1}^n s_ix_iy_i\le 0, \\
    \left(\frac{q_+(x,y,s)}{q_-(x ,y,s)}\right)^{(n+\sum_{i=1}^n\alpha_i)/2}e^{\frac{|y|^2+|x|^2}{2}-\frac{\sqrt{q_+(x,y,s)q_-(x,y,s)}}{2}},  & \sum_{i=1}^n s_ix_iy_i>  0,
    \end{array}
    \right.\\
    \end{align*}
    and using \cite[page 402]{Sa1}, we have that
    $$
    \red{|}\mathcal{R}_{\alpha,glob}^i(x,y)\red{|}\le C\int_{(-1,1)^n}K_\alpha(x,y,s)\chi_{N_1^c}(x,y,s)\Pi_\alpha(s)ds, \:\;x,y\in (0,\infty)^n. 
    $$
    The operator $\mathbb{K}_\alpha$, defined by
     $$
  \mathbb{K}_\alpha(f)(x)=\int_{(0,\infty)^n}f(y)\int_{(-1,1)^n}K_\alpha(x,y,s)\chi_{N_1^c}(x,y,s)\Pi_\alpha(s)dsd\nu_\alpha(y), \:\;x\in (0,\infty)^n,
    $$
    is selfadjoint in $L^2((0,\infty)^n,\nu_\alpha)$ and it is bounded from $L^2((0,\infty)^n,\nu_\alpha)$ into itself and from   $L^1((0,\infty)^n,\nu_\alpha)$ into  $L^{1,\infty}((0,\infty)^n,\nu_\alpha)$. Then, by using interpolation and duality we conclude that  $\mathbb{K}_\alpha$ is bounded from $L^q((0,\infty)^n,\nu_\alpha)$ into itself, for every $1<q<\infty$, and from $L^1((0,\infty)^n,\nu_\alpha)$ into  $L^{1,\infty}((0,\infty)^n,\nu_\alpha)$.

    \subsection{$L^p$-boundedness properties of $ \V_\rho(\{  \mathcal{R}_{i,\alpha;\epsilon,loc}\}_{\epsilon>0})$}
    
    \textcolor{white}{v}\vspace{0.2cm}

   \red{Firstly, we shall prove  that the variation operator $ \V_\rho(\{  \mathcal{R}_{i,\alpha;\epsilon}\}_{\epsilon>0})$ is bounded from 
   	$L^2((0,\infty)^n,\nu_\alpha)$ into itself. Once this property is proved,} since  $ \V_\rho(\{  \mathcal{R}_{i,\alpha;\epsilon,glob}\}_{\epsilon>0})$ is bounded from $L^2((0,\infty)^n,\nu_\alpha)$ into itself, we will deduce that  $ \V_\rho(\{  \mathcal{R}_{i,\alpha;\epsilon,loc}\}_{\epsilon>0})$ is also bounded from $L^2((0,\infty)^n,\nu_\alpha)$ into itself. 
    	Thus, by proceeding as in the proof of Lemma \ref{Claim3}, we get that  $ \V_\rho(\{  \mathcal{R}_{i,\alpha;\epsilon,loc}\}_{\epsilon>0})$ is bounded from $L^2((0,\infty)^n,m_\alpha)$ into itself (we recall that $((0,\infty)^n, |\cdot|, m_\alpha)$ is a space of homogeneous type).


  \vspace{0.2 cm}
    In \cite{BCT}, the authors studied the variation and oscillation operators for the Riesz transfom $\mathbb{R}_\lambda$ associated with the Laguerre type operator $\mathbb{L}_\lambda$ given by
    $$
    \mathbb{L}_\lambda =\frac{1}{2}{D_\lambda}^*D_\lambda+\lambda +1, \:\:\lambda>-1,
    $$
    in $(0,\infty)$. Here, $D_\lambda f(x)=x^{\lambda+1/2}\frac{d}{dx}(x^{-\lambda-1/2}f(x))+xf(x)$ and $D_\lambda^*$ represents the formal adjoint of $D_\lambda$ in $L^2((0,\infty),dx)$.

    We now recall the definition of  $\mathbb{R}_\lambda$. For every $k\in\N$, we define
    $$
    \varphi_k^\lambda(x)=\left(\frac{\Gamma(k+1)}{\Gamma(k+1+\lambda)} \right)^{1/2}e^{-x^2/2}x^\lambda L_k^\lambda(x^2)(2x)^{1/2}, \:\:x\in (0,\infty).
    $$
    $\{\varphi_k^\lambda\}_{k=0}^\infty$ is an orthonormal basis in $L^2((0,\infty),dx)$. We have that
    $$
    \mathbb{L}_\lambda\varphi_k^\lambda=(2k+\lambda+1)\varphi_k^\lambda, \quad k\in\N.
    $$
    We define $\mathfrak{{L}}_\lambda$ as follows
      $$
    \mathfrak{L}_\lambda f=\sum_{k=0}^\infty(2k+\lambda+1)e_k^\lambda(f)\varphi_k^\lambda, \quad f\in \mathcal{D}(    \mathfrak{L}_\lambda),
    $$
    where the domain of $\mathfrak{L}_\lambda$, $\mathcal{D}(\mathfrak{L}_\lambda)$,     is defined by
    $$
    \mathcal{D}(\mathfrak{L}_\lambda)=\left\{f\in L^2(0,\infty): \:\sum_{k=0}^\infty|(2k+\lambda+1)e_k^\lambda(f)|^2<\infty \right\}.
    $$
    Here, for every $f\in L^2((0,\infty), dx)$,
    $$
    e_k^\lambda(f)=\int_0^\infty \varphi_k^\lambda (x)f(x)dx, \quad k\in\N.
    $$
  For every $t>0$, we consider
  $$
  \mathbb{W}_t^\lambda(f)=\sum_{k=0}^\infty e^{-(2k+\lambda+1)t}e_k^\lambda(f)\varphi_k^\lambda,\quad f\in L^2((0,\infty),dx).
  $$  
    
  $\{  \mathbb{W}_t^\lambda\}_{t>0}$ is the semigroup of operators generated by $-\mathfrak{L}_\lambda$ in $L^2((0,\infty),dx)$. We can write, for every $t>0$,
  \begin{equation}\label{eq3.1}
    \mathbb{W}_t^\lambda(f)(x)=\int_0^\infty  \mathbb{W}_t^\lambda(x,y)f(y)dy, \quad f\in L^2((0,\infty),dx),
  \end{equation}
    where 
    $$
    \mathbb{W}_t^\lambda(x,y)=2(xy)^{1/2}\frac{e^{-t}}{1-e^{-2t}}I_{\alpha}\left(\frac{2x ye^{-t}}{1-e^{-2t}}\right)\exp\left(-\frac{1}{2}(x^2+y^2)\frac{1+e^{-2t}}{1-e^{-2t}} \right),\quad x,y\in (0,\infty).
    $$

    The integral in \eqref{eq3.1} also defines a bounded operator in $L^p((0,\infty),dx)$, for every $1\le p\le \infty$. Furthermore, $\{ \mathbb{W}_t^\lambda\}_{t>0}$ can be seen as a semigroup of operators in $L^p((0,\infty),dx)$, for every $1\le p\le \infty$.
    
    In the $\mathfrak{L}_\lambda$-setting, the Riesz transform $\mathbb{R}_\lambda$ is defined by
    $$
    \mathbb{R}_\lambda(f)(x)=\lim_{\epsilon\to 0}\int_{|x-y|>\epsilon} \mathbb{R}_\lambda(x,y)f(y)dy, \quad \text{ for almost all } x\in(0,\infty)
    $$
    and every $f\in L^p((0,\infty),dx)$, $1\le p<\infty$, being
    $$
    \mathbb{R}_\lambda(x,y)=\frac{1}{\sqrt{\pi}}\int_0^\infty D_\lambda \mathbb{W}_t^\lambda(x,y)\frac{dt}{\sqrt{t}}, \quad  x,y\in (0,\infty), \:\:x\neq y.
    $$

    $L^p$-boundedness properties of $\mathbb{R}_\lambda$  were studied in \cite{AMST} and  \cite{NS2}.
    
    For every $\epsilon>0$, we denote
    $$
     \mathbb{R}_{\lambda,\epsilon}(f)(x)=\int_{|x-y|>\epsilon} \mathbb{R}_\lambda(x,y)f(y) dy, \quad x\in (0,\infty).
    $$
    In \cite[Theorem 1.4]{BCT}, $L^p$-boundedness properties of the variation operator $\V_\rho(\{\mathbb{R}_{\lambda,\epsilon}\}_{\epsilon>0})$ were established. In particular, it was proved that $\V_\rho(\{\mathbb{R}_{\lambda,\epsilon}\}_{\epsilon>0})$ is bounded from $L^2((0,\infty),dx)$ into itself. By using the procedure developed in \cite{BCC}, this property can be extended to higher dimensions.

    Let $i=1,\dots,n$. By keeping in mind our usual notation and denoting
    $D_{\alpha_i} f(x)=x_i^{\alpha_i+1/2}\frac{d}{dx_i}(x_i^{-\alpha_i-1/2}f(x))+x_if(x)$, $x=(x_1,\dots,x_n)\in (0,\infty)^n$, 
    we define, for every $\epsilon>0,$
    $$
      \mathbb{R}_{i,\alpha;\epsilon}(f)(x)=\int_{|x-y|>\epsilon} \mathbb{R}_\alpha^i(x,y)f(y) dy, \quad x\in (0,\infty)^n.
    $$
    where
    $$
    \mathbb{R}_\alpha^i(x,y)=\frac{1}{\sqrt{\pi}}\int_0^\infty D_{\alpha_i} \mathbb{W}_t^\alpha(x,y)\frac{dt}{\sqrt{t}},\quad x,y\in (0,\infty)^n, \:\: x\neq y,
    $$
    and
    $$
    \mathbb{W}_t^\alpha(x,y)=\prod_{i=1}^n\mathbb{W}_t^{\alpha_i}(x_i,y_i), \quad x=(x_1,\dots,x_n), \:\: y=(y_1,\dots,y_n)\in (0,\infty)^n \text{ and }t>0.
        $$
    We have that the variation operator $\V_\rho(\{\mathbb{R}_{i,\alpha;\epsilon}\}_{\epsilon>0})$ is bounded from $L^2((0,\infty)^n,dx)$ into itself.
    
    \red{Since
    \begin{align*}
    \mathbb{W}_t^\alpha(x,y)={2^n}e^{-\frac{|x|^2+|y|^2}{2}}{\prod_{j=1}^n\frac{(x_jy_j)^{\alpha_j+1/2}}{\Gamma(\alpha_j+1)}}\blue{\mathcal{W}_t^{\alpha}(x,y)}, \quad x,y\in (0,\infty)^n, \text{ and } t>0,
    \end{align*}
    \blue{and  $$\mathcal{W}_t^{\alpha}(x,y)=e^{-t\left(\sum_{i=1}^n \alpha_i+n\right)}\prod_{i=1}^nW_{2t}^{\alpha_i}(x_i^2,y_i^2)=:\prod_{i=1}^n \mathcal{W}_t^{\alpha_i}(x_i,y_i),$$}
we get that
\begin{align*}
x_i^{\alpha_i+1/2}\partial_{x_i}\left(x_i^{-\alpha_i-1/2}\mathbb{W}_t^\alpha(x,y)\right)&=2^n{\prod_{\substack{j=1\\j\neq i}}^n\frac{(x_jy_j)^{\alpha_j+1/2}}{\Gamma(\alpha_j+1)}}\mathcal{W}_t^{\alpha_j}(x_j,y_j)e^{-\frac{|x|^2+|y|^2}{2}}\\
\cdot&\blue{\frac{(x_iy_i)^{\alpha_i+1/2}}{\Gamma(\alpha_i+1)}}\left(-x_i\mathcal{W}_t^{\alpha_i}(x_i,y_i)+\partial_{x_i}\mathcal{W}_t^{\alpha_i}(x_i,y_i)\right), \\&\qquad \qquad \qquad\qquad \qquad\qquad\:\, x,y\in (0,\infty)^n, \; t>0.
\end{align*}
It follows that}    
    $$
    D_{\alpha_i}\mathbb{W}_t^\alpha(x,y)={2^n}e^{-\frac{|x|^2+|y|^2}{2}}{\prod_{j=1}^n\frac{(x_jy_j)^{\alpha_j+1/2}}{\Gamma(\alpha_j+1)}}\partial_{x_i}\mathcal{W}_t^\alpha(x,y), \quad x,y\in (0,\infty)^n, \text{ and } t>0.
    $$
    Then,
    $$
    \mathbb{R}_\alpha^i(x,y)=2^ne^{-\frac{|x|^2+|y|^2}{2}}{\prod_{j=1}^n\frac{(x_jy_j)^{\alpha_j+1/2}}{\Gamma(\alpha_j+1)}}\mathcal{R}_\alpha^i(x,y),\quad x,y\in (0,\infty)^n, \:\: x\neq y,
    $$
    and, for every $\epsilon>0$, 
    \begin{align*}
    \mathcal{R}_{i,\alpha;\epsilon}(f)(x)&=\int_{|x-y|>\epsilon} e^{\frac{|x|^2+|y|^2}{2}}{\prod_{j=1}^n(x_jy_j)^{-(\alpha_j+1/2)}}\mathbb{R}_\alpha^i(x,y)f(y) e^{-|y|^2}{\prod_{j=1}^n y_j^{2\alpha_j+1}}dy\\
    &=e^{|x|^2/2}{\prod_{j=1}^n x_j^{-(\alpha_j+1/2)}}\mathbb{R}_{i,\alpha;\epsilon}(e^{-|y|^2/2}{\prod_{j=1}^n y_j^{\alpha_j+1/2}}f)(x), \quad x\in (0,\infty)^n.
    \end{align*}
    
    Thus, for every $x\in (0,\infty)^n$, it follows that
    \begin{align*}
    \V_\rho(\{\mathcal{R}_{i,\alpha;\epsilon}\}_{\epsilon>0})(f)(x)=e^{|x|^2/2}{\prod_{j=1}^nx_j^{-(\alpha_j+1/2)}} \V_\rho(\{\mathbb{R}_{i,\alpha;\epsilon}\}_{\epsilon>0})(e^{-|y|^2/2}{\prod_{j=1}^n y_j^{\alpha_j+1/2}}f)(x),
    \end{align*}
    and if $f\in {L^2((0,\infty)^n, \nu_\alpha)}$ we get that 
    \begin{align*}
    \|  \V_\rho(\{\mathcal{R}_{i,\alpha;\epsilon}\}_{\epsilon>0})(f)&\|_{L^2((0,\infty)^n, \nu_\alpha)}=\left\| \V_\rho(\{\mathbb{R}_{i,\alpha;\epsilon}\}_{\epsilon>0})\left(e^{-\frac{|y|^2}{2}}{2^{\red{n/2}}}\prod_{j=1}^n \frac{y_j^{\alpha_j+1/2}}{\red{\sqrt{\Gamma(\alpha_j+1)}}}f\right)\right\|_{L^2((0,\infty)^n, dx)}\\
    &\le C \left\|e^{-|y|^2{/2}}{2^{n/2}\prod_{j=1}^n \frac{y_j^{\alpha_j+1/2}}{\sqrt{\Gamma(\alpha_j+1)}}}f \right\|_{L^2((0,\infty)^n, dx)}=C \|f\|_{L^2((0,\infty)^n, \nu_\alpha)}.
    \end{align*}
    Therefore, we have just proved that $ \V_\rho(\{\mathcal{R}_{i,\alpha;\epsilon}\}_{\epsilon>0})$ is bounded from $L^2((0,\infty)^n, \nu_\alpha)$ into itself.

     Observe that
    
    $$
    \mathcal{R}^i_{\alpha,loc}(x,y)=\int_{(-1,1)^n} \mathcal{R}^i_{\alpha,loc}(x,y,s)\Pi_\alpha(s)ds, \quad x,y\in (0,\infty)^n, \:\: x\neq y,
    $$
    {where}
    \begin{align*}
    \mathcal{R}^i_{\alpha,loc}(x,y&,s)=\frac{{-2}}{\sqrt{\pi}}\int_0^\infty (1-e^{-2t})^{-n-\sum_{i=1}^n\alpha_i{-1}}(e^{-2t}x_i-e^{-t}y_i{s_i})e^{-t({n+\sum_{i=1}^n\alpha_i})}\\
    &\:\:\cdot\exp\left(-\frac{q_{{-}}(e^{-t}x,y,s)}{1-e^{-2t}}\right)e^{|y|^2}\varphi(x,y,s)\frac{dt}{\sqrt{t}},\:\: x,y\in (0,\infty)^n, \: x\neq y, \:\:s\in (-1,1)^n.
    \end{align*}
    According to \cite[Lemma 3.3]{Sa1}, we have that
    \begin{equation}\label{eq3.2}
    |\mathcal{R}_{\alpha,loc}^i(x,y,s)|\le C q_-(x,y,s)^{-n-\sum_{i=1}^n\alpha_i}
    \end{equation}
    and
    \begin{equation}\label{eq3.3}
    |\nabla_x\mathcal{R}_{\alpha,loc}^i(x,y,s)|+ |\nabla_y\mathcal{R}_{\alpha,loc}^i(x,y,s)|\le C q_-(x,y,s)^{-n-1/2-\sum_{i=1}^n\alpha_i},
    \end{equation}
    for every $(x,y,s)\in N_\tau$, with $\tau>0$.
    
    Then, by using \cite[Lemma 3.1]{BCN} we deduce that
    \begin{equation}\label{eq3.4}
    |\mathcal{R}_{\alpha,loc}^i(x,y)|\le \frac{C}{m_\alpha(B(x,|x-y|))},\quad x,y\in (0,\infty)^n, \: x\neq y,
    \end{equation}
    and 
    \begin{equation}\label{eq3.5}
    |\nabla_x\mathcal{R}_{\alpha,loc}^i(x,y)|+ |\nabla_y\mathcal{R}_{\alpha,loc}^i(x,y)|\le  \frac{C}{|x-y|m_\alpha(B(x,|x-y|))},\quad x,y\in (0,\infty)^n, \: x\neq y.
    \end{equation}
    	
    
    
   \red{Inequalities \eqref{eq3.4} and \eqref{eq3.5} will be very  useful in the sequel. }
        
     
      We are going to see that $ \V_\rho(\{\mathcal{R}_{i,\alpha;\epsilon,loc}\}_{\epsilon>0})$ is bounded \red{from $L^1((0,\infty)^n, m_\alpha)$ into $L^{1,\infty}((0,\infty)^n, m_\alpha)$. Once this property is established, by using interpolation we conclude that $\V_\rho(\{\mathcal{R}_{i,\alpha;\epsilon,loc}\}_{\epsilon>0})$ is bounded from $L^q((0,\infty)^n, m_\alpha)$ into itself, for every $1<q\le 2$.}
    
    
    Suppose that $f\red{\in L^2((0,\infty)^n,m_\alpha)}$ is compactly supported. \red{Then, $f\in L^1((0,\infty)^n,m_\alpha)$}. By {rescaling}, in order to prove our result it is sufficient to see that
    $$
    m_\alpha(\{x\in (0,\infty)^n:\: \V_\rho(\{\mathcal{R}_{i,\alpha;\epsilon,loc}\}_{\epsilon>0}){(f)}(x)>2\})\le C \|f\|_{L^1((0,\infty)^n, m_\alpha),}
    $$
        where $C>0$ does not depend on $f$.
   According to Calder\'on-Zygmund decomposition of $f$ with {height} $2$ (see  \cite[Th\'eor\`eme 2.2, p.73-74]{CoW}), there exists a family of cubes $\{I_j\}_{j\in\N}$ and $C,M>0$ such that

   \begin{itemize}
 	\item[(i)]  	$\displaystyle|f(x)|\le C,$ $x\in \Omega^c$, where $\displaystyle\Omega=\cup_{j=1}^\infty I_j;$
     	\item[(ii)]  $\displaystyle2<\frac{1}{m_\alpha(I_j)}\int_{I_j}|f(x)|dx\le C, \quad j\in\N;$
     	 	\item[(ii)]  $\displaystyle\sum_{j=1}^\infty m_\alpha (I_j)\le C\int_{(0,\infty)^n}|f(x)|dm_\alpha(x);$
     	 	 	\item[(iv)]  $\displaystyle\sum_{j=1}^\infty \chi_{I_j}(x)\le M,$ $x\in (0,\infty)^n$.
    \end{itemize}
    There exist also integrable functions $g$ and $b_j$, $j\in\N$, such that
       \begin{itemize}
    	\item[(v)]  	$f=g+b$, where $b=\sum_{j=1}^\infty b_j$;
    	\item[(vi)]  $|g(x)|\le C$, for almost all $x\in (0,\infty)^n$;
    	\item[(vii)]  $\|g\|_{L^1((0,\infty)^n,m_\alpha)}\le C \|f\|_{L^1((0,\infty)^{\red{n}},m_\alpha)};$
    	\item[(viii)]$\supp b_j\subseteq I_j$, $\displaystyle\int_{(0,\infty)^n}b_j(x)dm_\alpha(x)=0$ and $\displaystyle\frac{1}{m_\alpha(I_j)}\int_{I_j}|b_j(x)|dm_\alpha(x)\le C$, $j\in\N$.
    	\item[(ix)]  $\displaystyle\int_{I_i}|b_i(x)|dm_\alpha(x)\le C \int_{I_i}|f(x)|dm_\alpha(x)$, $i\in\N$.
    \end{itemize}
\red{Note that, 
	since  $f\in L^2((0,\infty)^n,m_\alpha)$ and
	 $g\in L^1((0,\infty)^n,m_\alpha)\cap L^\infty((0,\infty)^n,m_\alpha),$ $g,b\in L^2((0,\infty)^n,m_\alpha)$.}
    We have that
    \begin{align*}
    m_\alpha(\{x\in (0,\infty)^n:\: \V_\rho&(\{\mathcal{R}_{i,\alpha;\epsilon,loc}\}_{\epsilon>0}){(f)}({x})>2\})\\
    &\le   m_\alpha(\{x\in (0,\infty)^n:\: \V_\rho(\{\mathcal{R}_{i,\alpha;\epsilon,loc}\}_{\epsilon>0}){(g)}({x})>1\}) \\&+  m_\alpha(\{x\in (0,\infty)^n:\: \V_\rho(\{\mathcal{R}_{i,\alpha;\epsilon,loc}\}_{\epsilon>0}){(b)}({x})>1\}).
    \end{align*}
    Since  $ \V_\rho(\{\mathcal{R}_{i,\alpha;\epsilon,loc}\}_{\epsilon>0})$ is bounded from $L^2((0,\infty)^n, m_\alpha)$ into itself, we get that
    \begin{align*}
     m_\alpha(\{x&\in (0,\infty)^n:\: \V_\rho(\{\mathcal{R}_{i,\alpha;\epsilon,loc}\}_{\epsilon>0}){(g)}({x})>1\})\\&\le \int_{(0,\infty)^n} (\V_\rho(\{\mathcal{R}_{i,\alpha;\epsilon,loc}\}_{\epsilon>0}){(g)}({x}))^2dm_\alpha(x)\le C \int_{(0,\infty)^n}|g(x)|^2dm_\alpha(x)
     \\
     &\le  C \int_{(0,\infty)^n}|g(x)| dm_\alpha(x)\le  C \int_{(0,\infty)^n}|f(x)| dm_\alpha(x).
    \end{align*}
    On the other hand, we have that
       \begin{align*}
    m_\alpha(\{x\in (0,\infty)^n:\: \V_\rho(&\{\mathcal{R}_{i,\alpha;\epsilon,loc}\}_{\epsilon>0}){(b)}({x})>1\})\\
     &\le m_\alpha(\{x\in \tilde{\Omega}^c:\: \V_\rho(\{\mathcal{R}_{i,\alpha;\epsilon,loc}\}_{\epsilon>0}){(b)}({x})>1\})\\&\quad+ m_\alpha(\{x\in \tilde{\Omega}:\: \V_\rho(\{\mathcal{R}_{i,\alpha;\epsilon,loc}\}_{\epsilon>0}){(b)}({x})>1\}).
    \end{align*}
    Here, $\tilde{\Omega}=\displaystyle\bigcup_{j=1}^\infty\tilde{I}_j.$ If $I=\prod_{i=1}^n (a_i,b_i)\subset (0,\infty)^n$ and ${d}_i=b_i-a_i$, $i=1,\dots,n,$ we define $\tilde{I}=  \prod_{i=1}^n\left(\left(\blue{\frac{a_i+b_i}{2}-\left(\frac{3}{2}\sqrt{n}+1\right){d}_i,\frac{a_i+b_i}{2}+\left(\frac{3}{2}\sqrt{n}+1\right){d}_i}\right)\cap (0,\infty)\right)$.
    
    Since $m_\alpha$ is doubling, we have that
    \begin{align*}
    m_\alpha(\{x\in \tilde{\Omega}:\: \V_\rho(\{\mathcal{R}_{i,\alpha;\epsilon,loc}\}_{\epsilon>0}){(b)}({x})>1\})&\le m_\alpha (\tilde{\Omega})\le C\sum_{j=1}^n m_\alpha(I_j)\\&\le C \int_{(0,\infty)^n}|f(x)|dm_\alpha(x).
    \end{align*}   
    In order to prove that 
    $$
    m_\alpha(\{x\in \tilde{\Omega}^c:\: \V_\rho(\{\mathcal{R}_{i,\alpha;\epsilon,loc}\}_{\epsilon>0}){(b)}({x})>1\})\le C \int_{(0,\infty)^n}|f(x)|dm_\alpha(x),
    $$
    we shall follow the five steps in \cite[p. {1425--1430}]{MTX2}. We now prove the properties that we will need in our setting  {so that the procedure runs.}
    
    \red{	Since  $b\in L^2((0,\infty)^n,m_\alpha)$, $\V_\rho(\{\mathcal{R}_{i,\alpha;\epsilon,loc}\}_{\epsilon>0}){(b)}\in L^2((0,\infty)^n,m_\alpha)$ and hence \newline $\V_\rho(\{\mathcal{R}_{i,\alpha;\epsilon,loc}\}_{\epsilon>0}){(b)}(x)<\infty$, $x\in\blue{(0,\infty)^{n}}\setminus \mathbb{V}$, where $m_\alpha (\mathbb{V})=0.$  A continuity argument allows us to see that
      $$
    \V_\rho(\{\mathcal{R}_{i,\alpha;\epsilon,loc}\}_{\epsilon>0}){(b)}({x})= \sup_{\substack{0<t_k\dots<t_1\\t_j\in\mathbb{Q}_+,\; j=1,\dots,k,\; k\in\N}}\left(\sum_{j=1}^{k-1}|\mathcal{R}_{i,\alpha;t_j,loc}{(b)}({x})-\mathcal{R}_{i,\alpha;t_{j+1},loc}{(b)}({x})|^\rho \right)^{1/\rho},
    $$  
where $\mathbb{Q}_+=\mathbb{Q}\cap (0,\infty)$.}
    
    \red{Let $x\in \tilde{\Omega}^c\setminus \mathbb{V}$. We can find $0<t_k<t_{k-1}<\dots<t_1$, for certain $k\in\N$, being $t_j\in\mathbb{Q}_+,\; j=1,\dots,k,$ and that     
    	\begin{equation}\label{F1}
\V_\rho(\{\mathcal{R}_{i,\alpha;\epsilon,loc}\}_{\epsilon>0}){(b)}({x})\le 2\left(\sum_{j=1}^{k-1}|\mathcal{R}_{i,\alpha;t_j,loc}{(b)}({x})-\mathcal{R}_{i,\alpha;t_{j+1},loc}{(b)}({x})|^\rho \right)^{1/\rho}.
\end{equation}
It is clear that the family $\{t_j\}_{j=1}^k$ is depending on $x$. Furthermore, the set $\{t_j\}_{j=1}^{\blue{k}}$ satisfying \eqref{F1} does not need to be unique. It is necessary to choose $\{t_j\}_{j=1}^{\blue{k}}$ in  a precise way in order to get a measurable function in the right hand side of \eqref{F1}.}


\blue{ 
We denote by $P_{finite}(\mathbb{Q}_+)$ the set of all finite subsets of $\mathbb{Q}_+$. 
For every $x\in \tilde{\Omega}^c\setminus \mathbb{V}$, we define the set $\mathcal{S}(x)\subset P_{finite}(\mathbb{Q}_+) $ as follows: the set $A=\{t_1,t_2,\dots, t_k\}\subset \mathbb{Q}_+$, being $t_{j+1}<t_j$, $j=1,\dots,k-1$, is in $\mathcal{S}(x)$ when \eqref{F1} holds. We define $$A(x)=\min\{j\in\N: \, Z_j\in\mathcal{S}(x)\}, \quad x\in \tilde{\Omega}^c\setminus \mathbb{V}.$$  
If $j\in \N$ and $Z_j=\{t_1,\dots,t_k\}$ with $t_{l+1}<t_l$, $l=1,\dots,k-1$, we write
$$
V_j(x)=\left(\sum_{l=1}^{k-1}|\mathcal{R}_{i,\alpha;t_l,loc}{(b)}({x})-\mathcal{R}_{i,\alpha;t_{l+1},loc}{(b)}({x})|^\rho \right)^{1/\rho}, \quad x\in(0,\infty)^{n},
$$
and
\begin{align*}
Y_j=&\Big\{x\in \tilde{\Omega}^c\setminus \mathbb{V}:\,\V_\rho(\{\mathcal{R}_{i,\alpha;\epsilon,loc}\}_{\epsilon>0}){(b)}({x})\le 2 V_j(x) \text{ and }\\
&\qquad\V_\rho(\{\mathcal{R}_{i,\alpha;\epsilon,loc}\}_{\epsilon>0}){(b)}({x})> 2 V_l(x), \:{l<j} \Big\}.
\end{align*} 
The functions $V_j$, $j\in \N$, and $\V_\rho(\{\mathcal{R}_{i,\alpha;\epsilon,loc}\}_{\epsilon>0}){(b)}$, as well as the sets $Y_j$, $j\in \N$, are measurable. Then, the function
$$
\mathbb{F}=\sum_{j=0}^\infty \chi_{Y_j}V_j
$$
is measurable. Note that $\mathbb{F}(x)=V_{A(x)}(x),$ $x\in\tilde{\Omega}^c\setminus \mathbb{V}$.  }

 If $J$ is an interval in $(0,\infty)$ we define
$R_J=\{y\in\R^n: \; |y|\in J\}$ and the following two sets
$$
\mathcal{I}_1(J)\blue{(x)}=\{\ell\in\N: \: I_\ell\subset (x+R_J)\},\blue{\quad x\in (0,\infty)^n,}
$$
and
$$
\mathcal{I}_2(J)\blue{(x)}=\{\ell\in\N: \: I_\ell\cap (x+\partial R_J)\neq \emptyset\},\blue{\quad x\in (0,\infty)^n}.
$$

\blue{In the sequel we will assume that $x\in\tilde{\Omega}^c\setminus \mathbb{V}$. To simplify notations,  we will not write the dependence on $x$ of $\{t_j\}_{j=1}^k$, $\mathcal{I}_1(J)$ and $\mathcal{I}_2(J)$, where $J$ is an interval in $(0,\infty)$.}

\blue{We denote by} $J_j=(t_{j+1},t_j]$, $j=1,\dots,k-1$. \blue{It follows that} $\mathcal{R}_{i,\alpha;t_j,loc}{(b_\ell)}({x})-\mathcal{R}_{i,\alpha;t_{j+1},loc}{(b_\ell)}({x})\neq 0$ only if $\ell\in \mathcal{I}_1(J_j)\cup \mathcal{I}_2(J_j)$.

{
  We have that
\begin{align*}
&\left(\sum_{j=1}^{k-1}\Big|\mathcal{R}_{i,\alpha;t_j,loc}{(b)}({x})-\mathcal{R}_{i,\alpha;t_{j+1},loc}{(b)}({x})\Big|^\rho \right)^{1/\rho}\\
&\le \left(\sum_{j=1}^{k-1}\Big|\sum_{\ell\in \mathcal{I}_1(J_j)}(\mathcal{R}_{i,\alpha;t_j,loc}{(b_\ell)}({x})-\mathcal{R}_{i,\alpha;t_{j+1},loc}{(b_\ell)}({x}))\Big|^\rho \right)^{1/\rho}\\
 &+\left(\sum_{j=1}^{k-1}\Big|\sum_{\ell\in \mathcal{I}_2(J_j)}(\mathcal{R}_{i,\alpha;t_j,loc}{(b_\ell)}({x})-\mathcal{R}_{i,\alpha;t_{j+1},loc}{(b_\ell)}({x}))\Big|^\rho \right)^{1/\rho}\\
 &\le \sum_{j=1}^{k-1}\Big|\sum_{\ell\in \mathcal{I}_1(J_j)}(\mathcal{R}_{i,\alpha;t_j,loc}{(b_\ell)}({x})-\mathcal{R}_{i,\alpha;t_{j+1},loc}{(b_\ell)}({x}))\Big|\\
 &+\left(\sum_{j=1}^{k-1}\Big|\sum_{\ell\in \mathcal{I}_2(J_j)}(\mathcal{R}_{i,\alpha;t_j,loc}{(b_\ell)}({x})-\mathcal{R}_{i,\alpha;t_{j+1},loc}{(b_\ell)}({x}))\Big|^2\right)^{1/2}.
\end{align*}
Our objective is to see that
\begin{align}\label{B1}
 m_\alpha&\Bigg(\Bigg\{x\in \tilde{\Omega}^c\blue{\setminus \mathbb{V}}:\: \sum_{j=1}^{k-1}\Big|\sum_{\ell\in \mathcal{I}_1(J_j)}(\mathcal{R}_{i,\alpha;t_j,loc}{(b_\ell)}({x})-\mathcal{R}_{i,\alpha;t_{j+1},loc}{(b_\ell)}({x}))\Big|>1/2\Bigg\}\Bigg)\nonumber\\&\le C \int_{(0,\infty)^n}|f(x)|dm_\alpha(x)
\end{align}
and 
\begin{align}\label{B2}
m_\alpha&\Bigg(\Bigg\{x\in \tilde{\Omega}^c\blue{\setminus \mathbb{V}}:\: \sum_{j=1}^{k-1}\Big|\sum_{\ell\in \mathcal{I}_2(J_j)}(\mathcal{R}_{i,\alpha;t_j,loc}{(b_\ell)}({x})-\mathcal{R}_{i,\alpha;t_{j+1},loc}{(b_\ell)}({x}))\Big|^2>1/4\Bigg\}\Bigg)\nonumber\\&\le C \int_{(0,\infty)^n}|f(x)|dm_\alpha(x).
\end{align} }
 \blue{	In the sequel we shall denote by  $c_{{\ell}}$ the center of $I_{{\ell}}$  and  by $l_{{\ell}}$ the length of the side of $I_{\ell}$, for every ${{\ell}}\in\N$.  }
    \begin{enumerate}
   
    	\item[(a)] This first step corresponds to step 2 in \cite{MTX2}. Since each $b_{\red{\ell}}$ has zero mean, according to \eqref{eq3.5} we get  
    	\begin{align*}
    { \left|\int_{I_{\red{\ell}}}\mathcal{R}_{{\alpha,loc}}^i(x,y)b_{\red{\ell}}(y)dm_\alpha(y)\right|}&\le	\int_{I_{\red{\ell}}}|\mathcal{R}_{{\alpha,loc}}^i(x,y)-\mathcal{R}_{{\alpha,loc}}^i(x,c_{\red{\ell}})||b_{\red{\ell}}(y)|dm_\alpha(y)\\&\le \frac{C\;\red{l_{\red{\ell}}}\int_{I_{\red{\ell}}}|b_{\red{\ell}}(y)|dm_\alpha(y)}{|x-c_{\red{\ell}}|m_\alpha(B(x,|x-c_{\red{\ell}}|))},\,\, x\not\in \tilde{\Omega} \text{ and } {\red{\ell}}\in\N.
    	\end{align*}
    \end{enumerate}

    	Since $m_\alpha$ is doubling, we obtain that
    	\begin{align*}
   	m_\alpha\Bigg(\Bigg\{&x\in \tilde{\Omega}^c\setminus \mathbb{V}:\: \sum_{j=1}^{k-1}\Big|\sum_{\ell\in \mathcal{I}_1(J_j)}(\mathcal{R}_{i,\alpha;t_j,loc}{(b_\ell)}({x})-\mathcal{R}_{i,\alpha;t_{j+1},loc}{(b_\ell)}({x}))\Big|>1/2\Bigg\}\Bigg)
    \\&\le C 	\int_{{\tilde{\Omega}^c}}\sum_{\red{\ell}=1}^\infty\int_{I_{\red{\ell}}}|\mathcal{R}_{{\alpha,loc}}^i(x,y)-\mathcal{R}_{{\alpha,loc}}^i(x,c_{\red{\ell}})||b_{\red{\ell}}(y)|dm_\alpha(y){dm_\alpha(x)}\\
    	&\le C\sum_{{\red{\ell}}=1}^\infty \red{l_{\red{\ell}}}\int_{I_{\red{\ell}}}|b_{\red{\ell}}(y)|dm_\alpha(y)\int_{|x-c_{\red{\ell}}|>\red{l_{\red{\ell}}}}\frac{dm_\alpha(x)}{|x-c_{\red{\ell}}|m_\alpha(B(x,|x-c_{\red{\ell}}|))}\\
    	&\le C\sum_{\red{\ell}=1}^\infty \red{l_{\red{\ell}}}\int_{I_{\red{\ell}}}|b_{\red{\ell}}(y)|dm_\alpha(y)\sum_{k=0}^\infty\int_{2^k\red{l_{\red{\ell}}}<|x-c_{\red{\ell}}|<2^{k+1}\red{l_{\red{\ell}}}}\frac{dm_\alpha(x)}{|x-c_{\red{\ell}}|m_\alpha(B(x,|x-c_{\red{\ell}}|))}\\
    	&\le C\sum_{{\red{\ell}}=1}^\infty \red{l_{\red{\ell}}}\int_{I_{\red{\ell}}}|b_{\red{\ell}}(y)|dm_\alpha(y)\sum_{k=0}^\infty\frac{1}{2^k\red{l_{\red{\ell}}}}\frac{m_\alpha(B(x,2^{k+1}\red{l_{\red{\ell}}}))}{m_\alpha(B(x,2^k\red{l_{\red{\ell}}}))}\\
    		&\le C\sum_{{\red{\ell}}=1}^\infty\int_{I_{\red{\ell}}}|b_{\red{\ell}}(y)|dm_\alpha(y)\le  C \int_{(0,\infty)^n}|f(y)|dm_\alpha(y),
    	\end{align*}
    	\red{and we can prove \eqref{B1}.}
    	
    	\red{In order to see that \eqref{B2} holds, we pass through long and short variations. We separate the family $\{J_j\}_{j=1}^{k-1}$ in two parts. We say that $J_j$ is of $I$- type when it does not contain any power of $2$ and $J_j$ is of $II$- type when it contains powers of $2$. If $J_j$ is of $I$- type, then there exists \blue{$l\in\Z$} for which $J_j\subseteq (\blue{2^l,2^{l+1}})$. On the other hand, if $J_j$ is of $II$- type, we can write $J_j=(t_{j+1},2^{m_j}]\cup (2^{m_j},2^{n_j}]\cup (2^{n_j}, t_j]$ where $m_j=\min\{\blue{l: \: 2^l}\in J_j\}$ and  $n_j=\max\{\blue{l: \: 2^l}\in J_j\}$. We define $S$ as the set of intervals $J$ in the $I$- type and those $(t_{j+1},2^{m_j}]$ and $(2^{n_j}, t_j]$ arising from intervals $J_j$ of $II$-type, such that the associated annuli translated by $x$, $x+R_J$, intersect some $I_\ell$. Also, we define $\mathcal{L}$ as the set of intervals  $(2^{m_j},2^{n_j}]$ arising from intervals $J_j$ of $II$-type, such that the  associated annuli translated by $x$, $x+R_{[2^{n_j},2^{m_j})}$, intersect some $I_\ell$.}

    	\red{
    	We have that
     \begin{align*}
     &\Bigg(\sum_{j=1}^{k-1}\left|\sum_{\ell\in \mathcal{I}_2(J_j)}(\mathcal{R}_{i,\alpha;t_j,loc}{(b_\ell)}({x})-\mathcal{R}_{i,\alpha;t_{j+1},loc}{(b_\ell)}({x}))\right|^2\Bigg)^{1/2}\\
     &\le \sqrt{3}\Bigg(\sum_{\substack{J\in\mathcal{L}\\ J \text{ related to } J_j, j=1,\dots,k-1}}\left|\sum_{\ell\in \mathcal{I}_2(J)}(\mathcal{R}_{i,\alpha;t_j,loc}{(b_\ell)}({x})-\mathcal{R}_{i,\alpha;t_{j+1},loc}{(b_\ell)}({x}))\right|^2\Bigg)^{1/2}\\
     &+\sqrt{3}\Bigg(\sum_{\substack{J\in S\\ J \text{ related to } J_j, j=1,\dots,k-1}}\Bigg|\sum_{\ell\in \mathcal{I}_2(J)}(\mathcal{R}_{i,\alpha;t_j,loc}{(b_\ell)}({x})-\mathcal{R}_{i,\alpha;t_{j+1},loc}{(b_\ell)}({x}))\Bigg|^2\Bigg)^{1/2}.
     \end{align*}
     The first sum on the right side is known as the long variation and the second one as the short variation.
}

\red{In order to prove \eqref{B2}, it is sufficient to see that
	\begin{small}
	\begin{align}\label{B3}
	m_\alpha&\Bigg(\Bigg\{x\in \tilde{\Omega}^c\blue{\setminus \mathbb{V}}:\: \sum_{\substack{J\in\mathcal{L}\\ J \text{ related to } J_j, j=1,\dots,k-1}}\Bigg|\sum_{\ell\in \mathcal{I}_2(J)}(\mathcal{R}_{i,\alpha;t_j,loc}{(b_\ell)}({x})-\mathcal{R}_{i,\alpha;t_{j+1},loc}{(b_\ell)}({x}))\Bigg|^2>\frac{1}{48}\Bigg\}\Bigg)\nonumber\\&\le C \int_{(0,\infty)^n}|f(x)|dm_\alpha(x)
	\end{align}
	\end{small}
and
\begin{small}
	\begin{align}\label{B4}
m_\alpha&\Bigg(\Bigg\{x\in \tilde{\Omega}^c\blue{\setminus \mathbb{V}}:\: \sum_{\substack{J\in S\\ J \text{ related to } J_j, j=1,\dots,k-1}}\Bigg|\sum_{\ell\in \mathcal{I}_2(J)}(\mathcal{R}_{i,\alpha;t_j,loc}{(b_\ell)}({x})-\mathcal{R}_{i,\alpha;t_{j+1},loc}{(b_\ell)}({x}))\Bigg|^2>\frac{1}{48}\Bigg\}\Bigg)\nonumber\\&\le C \int_{(0,\infty)^n}|f(x)|dm_\alpha(x).
\end{align}
\end{small}}
    
    	\begin{enumerate}
    	    	\item[(b)] \red{ Now we prove \eqref{B3}. Let $x\in{\tilde{\Omega}^c}\blue{\setminus \mathbb{V}}$ and $k\in\Z$. We denote $D_k=(2^k,2^{k+1}]$ and 	$\mathcal{I}_{2,k}=\mathcal{I}_2(D_k)$. }
    	    \end{enumerate}	
     For every $j\in\Z$,  let $\red{\mathbb{I}}_j=\{ \red{\ell}\in\N: \: l_{\red{\ell}}\in (2^j-2^{j-1},2^j+2^{j-1}]\}$  and define
    	$$
    	\triangle_j=\sum_{\red{\ell}\in \red{\mathbb{I}}_j}\int_{I_{\red{\ell}}}|f(y)|dm_\alpha(y).    	$$
    	We also consider 
    	$$
    	h_{k,j}(x)=\sum_{\ell\in \mathcal{I}_{2,k}\cap \red{\mathbb{I}}_j}|\mathcal{R}_{i,\alpha;2^k,loc}(b_\ell)(x)-\mathcal{R}_{i,\alpha;2^{k+1},loc}(b_\ell)(x)|.
    	$$
    	\blue{Suppose that 	$\ell\in \mathcal{I}_{2,k}\cap \red{\mathbb{I}}_j$. We have that
    	$$
  |x-c_\ell|\le 2^{k+1}+\frac{\sqrt{n} }{2}(2^j+2^{j-1})=2^{k+1}+\frac{3 }{2}\sqrt{n}2^{j-1}. 
$$
Since $x\not\in \tilde{I}_\ell$, it follows that $|x-c_\ell|\ge \left(\frac{3 }{2}\sqrt{n}+1\right)l_\ell\ge\left(\frac{3 }{2}\sqrt{n}+1\right)(2^j-2^{j-1})=\left(\frac{3 }{2}\sqrt{n}+1\right)2^{j-1}$.
We deduce that
$$
2^{k+1}+\frac{3 }{2}\sqrt{n}2^{j-1}\ge \left(\frac{3 }{2}\sqrt{n}+1\right)2^{j-1}.
$$
Hence, $j\le k+2$. }

 { In order to apply \cite[Lemma 2.2]{MTX2},} we are going to see that
    	$$
    	\int_{\tilde{\Omega}^c}|h_{k,j}(x)|^2\red{dm_\alpha(x)}\le C2^{j-k}\triangle_j,\quad j\in\Z, \:\ \blue{j\le k+2}.
    	$$
Let $j\in\Z.$    	From \eqref{eq3.4} \red{and the third property in (viii)}, we get that
    	\begin{align*}
    	|h_{k,j}(x)|^2&=\left(\sum_{\ell\in \mathcal{I}_{2,k}\cap \red{\mathbb{I}}_j}|\mathcal{R}_{i,\alpha;2^k,loc}(b_\ell)(x)-\mathcal{R}_{i,\alpha;2^{k+1},loc}(b_\ell)(x)| \right)^2\\
    	&\le C\left(\sum_{\ell\in \mathcal{I}_{2,k}\cap \red{\mathbb{I}}_j}\int_{2^k<|x-y|<2^{k+1}}\frac{|b_\ell(y)|}{m_\alpha(B(x,|x-y|))}dm_\alpha(y) \right)^2\\
    	&\le C \left(\sum_{\ell\in \mathcal{I}_{2,k}\cap \red{\mathbb{I}}_j}\frac{1}{m_\alpha(B(x,2^k))}\int_{I_\ell}{|b_\ell(y)|}dm_\alpha(y) \right)^2\\
        	&\le \frac{C}{m_\alpha(B(x,2^k))^2}\sum_{\ell\in \mathcal{I}_{2,k}\cap \red{\mathbb{I}}_j}{m_\alpha(I_\ell)}\sum_{\ell\in \mathcal{I}_{2,k}\cap \red{\mathbb{I}}_j}\int_{I_\ell}{|b_\ell(y)|}dm_\alpha(y), \quad x\in\tilde{\Omega}^c.
    	\end{align*}
    	
    \blue{If	$\ell\in \mathcal{I}_{2,k}\cap \red{\mathbb{I}}_j$ and $2^{k+1}>2\sqrt{n}(2^j+2^{j-1})$, we can write
    \begin{align*}
    \sum_{\ell\in \mathcal{I}_{2,k}\cap \red{\mathbb{I}}_j}{m_\alpha(I_\ell)}&\le C [m_\alpha(B(x,2^{k+1}+\blue{\sqrt{n}(2^j+2^{j-1})})\setminus B(x,2^{k+1}-\blue{\sqrt{n}(2^j+2^{j-1})}))\\&+m_\alpha(B(x,2^{k}+\blue{\sqrt{n}(2^j+2^{j-1})})\setminus B(x,2^{k}-\blue{\sqrt{n}(2^j+2^{j-1}))})],
    \end{align*}
where}
    		\begin{align*}
   &m_\alpha(B(x,2^{k+1}+\blue{\sqrt{n}(2^j+2^{j-1})})\setminus B(x,2^{k+1}-\blue{\sqrt{n}(2^j+2^{j-1})}))\\
   &=C\int_{B(x,2^{k+1}+\blue{\sqrt{n}(2^j+2^{j-1})})\setminus B(x,2^{k+1}-\blue{\sqrt{n}(2^j+2^{j-1})})}\prod_{i=1}^n \blue{z_i}^{2\alpha_i+1}d\blue{z_i}\\
   &\le C \prod_{i=1}^n(x_i+2^{\blue{k+1}}+\blue{\sqrt{n}(2^j+2^{j-1})})^{2\alpha_i+1}((2^{k+1}+\blue{\sqrt{n}(2^j+2^{j-1})})^n- (2^{k+1}-\blue{\sqrt{n}(2^j+2^{j-1}))}^n)\\
   &\le C\; 2^{k(n-1)}2^j \prod_{i=1}^n(x_i+2^{k+2})^{2\alpha_i+1}\le  C\; 2^{k(n-1)}2^j \prod_{i=1}^n(x_i+2^{k})^{2\alpha_i+1}.
    	\end{align*}
    	\blue{If $\ell\in \mathcal{I}_{2,k}\cap \red{\mathbb{I}}_j$ and $2^{k+1}\le 2\sqrt{n}(2^j+2^{j-1})$ being $j\le k+2$,
    		We have that
    		\begin{align*}
    		\sum_{\ell\in \mathcal{I}_{2,k}\cap \red{\mathbb{I}}_j}{m_\alpha(I_\ell)}&\le C m_\alpha(B(x,2^{k+1}+\blue{\sqrt{n}(2^j+2^{j-1})})),
    		\end{align*}
    		where
    	\begin{align*}
    	   m_\alpha(B(x,2^{k+1}+&\sqrt{n}(2^j+2^{j-1})))\\
    	&\le C \prod_{i=1}^n(x_i+2^{k+1}+\sqrt{n}(2^j+2^{j-1}))^{2\alpha_i+1}(2^{k+1}+\sqrt{n}(2^j+2^{j-1}))^n\\
    	&\le C\; 2^{k(n-1)}2^j \prod_{i=1}^n(x_i+2^{k})^{2\alpha+1}.
    	\end{align*}}
    	Therefore, 
    	$$
    		\sum_{\ell\in \mathcal{I}_{2,k}\cap \red{\mathbb{I}}_j}{m_\alpha(I_\ell)}\le C\;   2^{k(n-1)}2^j \prod_{i=1}^n(x_i+2^{k})^{2\alpha_i+1}.
    	$$

        On the other hand, since $|x-c_\ell|\sim 2^k$, when $x\in\tilde{\Omega}^c$ and $\ell\in \mathcal{I}_{2,k}\cap\red{\mathbb{I}}_j$, we obtain that
        \begin{align*}
        m_\alpha(B(x,2^{k}))&\sim  m_\alpha(B(x,|x-c_\ell|))\sim|x-c_\ell|^n\prod_{i=1}^n(x_i+|x-c_\ell|)^{2\alpha_i+1}\\
        &\sim |x-c_\ell|^n\prod_{i=1}^n(x_i+2^k)^{2\alpha_i+1}, \quad x\in\tilde{\Omega}^c\:\; \text{and } \ell\in \mathcal{I}_{2,k}\cap\red{\mathbb{I}}_j.
        \end{align*}
        Since $|x-c_\ell|\ge 2^{k-1}$ when $x\in\tilde{\Omega}^c$ and $\ell\in \mathcal{I}_{2,k}\cap\red{\mathbb{I}}_j$, we can write 
       
        \begin{align}\label{F}
        \int_{\tilde{\Omega}^c}&\red{|h_{k,j}(x)|^2dm_\alpha(x)}\nonumber\\&\red{\le C     \int_{\tilde{\Omega}^c}\frac{2^{k(n-1)}2^j\prod_{i=1}^n(x_i+2^k)^{2\alpha_i+1}}{m_\alpha(B(x,2^k))^2}	\sum_{\ell\in \mathcal{I}_{2,k}\cap \red{\mathbb{I}}_j}\int_{I_\ell}|b_\ell(y)|dm_\alpha(y)dm_\alpha(x)}  \nonumber   \\      &\le C \sum_{\ell\in \mathcal{I}_{2,k}\cap\red{\mathbb{I}}_j}\int_{I_\ell}|f(y)|dm_\alpha(y)\int_{|x-c_\ell|>2^{k-1}}\frac{2^{k(n-1)}2^j}{|x-c_\ell|^{2n}}dx\\
        &\le C   \sum_{\ell\in \mathcal{I}_{2,k}\cap\red{\mathbb{I}}_j}\int_{I_\ell}|f(y)|dm_\alpha(y)2^{j-k}\le C\triangle_j 2^{j-k}.\nonumber
        \end{align}
              \blue{Notice that we have used (ix) to get \eqref{F}.}
              
                 {Thus, by using \cite[Lemma 2.2]{MTX2}, step 4 in the proof of \cite[Theorem 1.1]{MTX2}, adapted to our setting, is completed \red{ and \eqref{B3} is proved}.} 
            \begin{enumerate}
            
            \item[(c)] In order to prove step 5 in the proof of \cite[Theorem 1.1]{MTX2} for our case, we can proceed as in (b).
            \red{Thus, \eqref{B4} is proved.}
             \end{enumerate}
            
            
          { To prove the $L^q$-boundedness of $ \V_\rho(\{\mathcal{R}_{i,\alpha;\epsilon,loc}\}_{\epsilon>0})$ for the remaining values of $q$,} we are going to see that $\V_\rho(\{\mathcal{R}_{i,\alpha;\epsilon,loc}\}_{\epsilon>0})$ is bounded from $L^\infty((0,\infty)^n)$ into \newline \begin{small}$BMO((0,\infty)^n,m_\alpha)$\end{small} {and then we shall use an interpolation argument.}
       
            We recall that the space $BMO((0,\infty)^n,m_\alpha)$ consists of all those functions $f\in L^1_{loc}((0,\infty)^n,m_\alpha)$ such that
            $$
            \|f\|_{BMO((0,\infty)^n,m_\alpha)}:=\sup_{Q \text{ cube in }(0,\infty)^n}\inf_{c\in\R}\frac{1}{m_\alpha(Q)}\int_Q|f(y)-c|dm_\alpha(y)<\infty.
            $$
            In order to prove our objective we adapt some ideas used by Mas in \cite{Mas}. We have to find $C>0$ such that, for every cube $Q$ in $(0,\infty)^n$ and $f\in L^\infty((0,\infty)^n)$, there exists $c\in\R$ such that
            $$
            \int_{Q}|\V_\rho(\{\mathcal{R}_{i,\alpha;\epsilon,loc}\}_{\epsilon>0})(f)(y)-c|dm_\alpha(y)\le C\|f\|_{L^\infty((0,\infty)^n)}m_\alpha(Q).
            $$
            Let $f\in L^\infty((0,\infty)^n)$ and $Q$ be a cube in $(0,\infty)^n$. We decompose $f$ as follows
            $$
            f=f_1+f_2,
            $$
           where $f_1=f\chi_{3Q}$.
           
           Since $\V_\rho(\{\mathcal{R}_{i,\alpha;\epsilon,loc}\}_{\epsilon>0})$ is bounded from $L^2((0,\infty)^n,{ m_\alpha})$ into itself, we have that
           \begin{align*}
             \int_{Q}|&\V_\rho(\{\mathcal{R}_{i,\alpha;\epsilon,loc}\}_{\epsilon>0})(f_1)(y)|dm_\alpha(y)\le (m_\alpha(Q))^{1/2}\|\V_\rho(\{\mathcal{R}_{i,\alpha;\epsilon,loc}\}_{\epsilon>0})(f_1)\|_{L^2((0,\infty)^n,{ m_\alpha})}\\
             &\le C(m_\alpha(Q))^{1/2}\|f_1\|_{L^2((0,\infty)^n,{ m_\alpha})}\le C(m_\alpha(Q))^{1/2}(m_\alpha(3Q))^{1/2}\|f\|_{L^{{\infty}}((0,\infty)^n)}\\
             &\le C \|f\|_{L^\infty((0,\infty)^n)}m_\alpha(Q).
           \end{align*}
           Since $\V_\rho(\{\mathcal{R}_{i,\alpha;\epsilon,loc}\}_{\epsilon>0})$ is sublinear, for every $c\in\R$ we get that
           \begin{align*}
           |\V_\rho(\{\mathcal{R}_{i,\alpha;\epsilon,loc}\}_{\epsilon>0})(f)-c|\le \V_\rho(\{\mathcal{R}_{i,\alpha;\epsilon,loc}\}_{\epsilon>0})(f_1)+ |\V_\rho(\{\mathcal{R}_{i,\alpha;\epsilon,loc}\}_{\epsilon>0})(f_2)-c|.
           \end{align*}
           We denote by $z_Q$ and $\red{l_Q}$ the center and the side length of $Q$, respectively. We consider $c=\V_\rho(\{\mathcal{R}_{i,\alpha;\epsilon,loc}\}_{\epsilon>0})(f_2)(z_Q)$. We can {assure} that $c<\infty$. Indeed, first note  that if $(x,y,s)\in N_\tau,$ for some $\tau>0$, it follows that
           \begin{align*}
           |x-y|^2=\sum_{i=1}^n(x_i^2+y_i^2-2x_iy_i)\le \sum_{i=1}^n(x_i^2+y_i^2-2x_iy_is_i)=q_-(x,y,s)\le \left(\frac{2C_0}{1+|x|+|y|}\right)^2.
                     \end{align*}
                     According to \eqref{eq3.4}, we obtain that
                                         
                                          \begin{align*}
                     \V_\rho(\{\mathcal{R}_{i,\alpha;\epsilon,loc}\}_{\epsilon>0})(f_2)(x)&\le C \int_{B\left(x,\frac{2C_0}{1+|x|}\right)\cap[(0,\infty)^n\setminus(3Q)]}\frac{|f(y)|}{m_\alpha(B(x,|x-y|))}dm_\alpha(y)\\
                     &\le C\frac{m_\alpha(B(x,2C_0))}{m_\alpha(B(x,\red{l_Q}))}\|f\|_{L^\infty((0,\infty)^n)}<\infty, \quad x\in Q.
                     \end{align*}
                     
                    On the other hand, we can write
                    \begin{align*}
                    |\V_\rho(\{\mathcal{R}_{i,\alpha;\epsilon,loc}\}_{\epsilon>0})(f_2)(x)-c|&\le \sup_{0<\epsilon_k<\dots<\epsilon_1}\Big(\sum_{j=1}^{k-1}\Big|\int_{\epsilon_{j+1}<|x-y|<\epsilon_j} \mathcal{R}^i_{\alpha,loc}(x,y)f_2(y)dm_\alpha(y)\\&-\int_{\epsilon_{j+1}<|z_Q-y|<\epsilon_j} \mathcal{R}^i_{\alpha,loc}(z_Q,y) f_2(y)dm_\alpha(y)\Big|^\rho\Big)^{1/ \rho}, \quad x\in Q.
                    \end{align*}
                    Let $x\in Q$. We choose $k\in\N$ and $0<\epsilon_k<\dots<\epsilon_1$ such that
                    \begin{small}
                    \begin{align*}
                    &|\V_\rho(\{\mathcal{R}_{i,\alpha;\epsilon,loc}\}_{\epsilon>0})(f_2)(x)-c|^\rho\\
                    &\le 2  \sum_{j=1}^{k-1}\Big|\int_{\epsilon_{j+1}<|x-y|<\epsilon_j} \mathcal{R}^i_{\alpha,loc}(x,y)f_2(y)dm_\alpha(y)-\int_{\epsilon_{j+1}<|z_Q-y|<\epsilon_j} \mathcal{R}^i_{\alpha,loc}(z_Q,y) f_2(y)dm_\alpha(y)\Big|^\rho.
                    \end{align*}
                    For every $j=1,\dots,k-1$, we have that
                    \begin{align*}
                    \Big|\int_{\epsilon_{j+1}<|x-y|<\epsilon_j} \mathcal{R}^i_{\alpha,loc}(x,y)f_2(y)dm_\alpha(y)-\int_{\epsilon_{j+1}<|z_Q-y|<\epsilon_j} &\mathcal{R}^i_{\alpha,loc}(z_Q,y) f_2(y)dm_\alpha(y)\Big|\\
                    &\quad\le \|f\|_{L^\infty((0,\infty)^n)}(\alpha_j+\beta_j), 
                    \end{align*}
                    \end{small}
                    	where
                    	$$
                    	\alpha_j=\int_{(0,\infty)^n\setminus (3Q)}\chi_{\{\epsilon_{j+1}<|z|<\epsilon_j\}}(x-y)|\mathcal{R}^i_{\alpha,loc}(x,y)- \mathcal{R}^i_{\alpha,loc}(z_Q,y)| dm_\alpha(y)
                    	$$
                    	  and
                    	$$
                    	\beta_j=\int_{(0,\infty)^n\setminus (3Q)}|\chi_{\{\epsilon_{j+1}<|z|<\epsilon_j\}}(x-y)-\chi_{\{\epsilon_{j+1}<|z|<\epsilon_j\}}(z_Q-y)|| \mathcal{R}^i_{\alpha,loc}(z_Q,y)| dm_\alpha(y).
                    	$$

Then, 
$$
|\V_\rho(\{\mathcal{R}_{i,\alpha;\epsilon,loc}\}_{\epsilon>0})(f_2)(x)-c|\le C \|f\|_{L^\infty((0,\infty)^n)}\left(\sum_{j=1}^{k-1}(\alpha_j+\beta_j)^\rho\right)^{1/\rho}.
$$
By using \eqref{eq3.5}, we get that
\begin{align*}
\left(\sum_{j=1}^{k-1}\alpha_j^\rho\right)^{1/\rho}&\le \sum_{j=1}^{k-1}\alpha_j\le C \int_{(0,\infty)^n\setminus (3Q)}\frac{|x-z_Q|}{|z_Q-y|m_\alpha(B(z_Q,|z_Q-y|))}dm_\alpha(y)\\
&\le C \red{l_Q}\sum_{\ell=1}^\infty\int_{3^{\red{\ell}+1}Q\setminus 3^\ell Q}\frac{1}{|z_Q-y|m_\alpha(B(z_Q,|z_Q-y|))}dm_\alpha(y)\\
	&\le C\sum_{\ell=1}^\infty\frac{m_\alpha(3^{\ell+1}Q)}{m_\alpha(B(z_Q,3^\ell\red{l_Q}))\red{3}^\ell}\le C,
\end{align*}
where $C$ does not depend on $(x,Q)$.

We now estimate $\left(\sum_{j=1}^{k-1}\beta_j^\rho\right)^{1/\rho}.$ We have that $|x-z_Q|\le\frac{\sqrt{n}}{2}\red{l_Q}$. We choose $M\in\N$ such that $\sqrt{n}<2^{M-1}$. We can assume that there exists $j_0\in\N$, $1\le j_0\le k$, $\epsilon_{j_0}=2^{M+2}\red{l_Q}$. If it is necessary, we can consider $\{\epsilon_j \}_{j=1}^k\cup \{2^{M+2}\red{l_Q}\}$ instead of $\{\epsilon_j \}_{j=1}^k$.

We define, as in \cite{Mas},
$$
J_0=\{j\in\{1,\dots,k\}: \:j\ge j_0\}, 
$$
and for every $m>M+2$,
\begin{align*}
J_{m,1}&=\{j\in\{1,\dots,{k-1}\}: \:2^{m-1}\red{l_Q}\le \epsilon_{j+1}<\epsilon_j\le 2^m \red{l_Q}, \text{ and } \epsilon_j-\epsilon_{j+1}\ge 2^M\red{l_Q}\},  \\
J_{m,2}&=\{j\in\{1,\dots,{k-1}\}: \:2^{m-1}\red{l_Q}\le \epsilon_{j+1}<\epsilon_j\le 2^m \red{l_Q}, \text{ and } \epsilon_j-\epsilon_{j+1}< 2^M\red{l_Q}\},\\
J_{m,3}&=\{j\in\{1,\dots,{k-1}\}: \:2^{m-1}\red{l_Q}\le \epsilon_{j+1}\le 2^m \red{l_Q}<\epsilon_j \}.
\end{align*}
It is clear that $\{1,\dots,k\}=J_0\cup (\cup_{m>M+2}(J_{m,1}\cup J_{m,2}\cup J_{m,3})$.

From \eqref{eq3.4} we deduce that
\begin{align*}
&\left(\sum_{j\in J_0}\beta_j^\rho\right)^{1/\rho}\le C \sum_{j\in J_0}\int_{(0,\infty)^n\setminus(3Q)}\frac{\chi_{\{\epsilon_{j+1}\le |{z}|\le \epsilon_j\}}(x-y)+\chi_{\{\epsilon_{j+1}\le |{z}|\le \epsilon_j\}}(z_Q-y)}{m_\alpha(B(z_Q,|z_Q-y|))}dm_\alpha(y)\\
&\quad\le C \left[\int_{\substack{|x-y|<2^{M+2}\red{l_Q}\\(0,\infty)^n\setminus(3Q)}}\frac{dm_\alpha(y)}{m_\alpha(B(z_Q,|z_Q-y|))}+ \int_{\substack{|z_Q-y|<2^{M+2}\red{l_Q}\\(0,\infty)^n\setminus(3Q)}}\frac{dm_\alpha(y)}{m_\alpha(B(z_Q,|z_Q-y|))}\right]\\&\\
&\quad\le C \frac{m_\alpha(B(z_Q,(2^{M+2}+\sqrt{n})\red{l_Q}))+m_\alpha(B(z_Q,2^{M+2}\red{l_Q}))}{m_\alpha(B(z_Q,\red{l_Q}))}\le C,
\end{align*}
being $C$ independent of $(x,Q)$.

Let $m>M+2$. We define $A(a,r,R)=\{y\in (0,\infty)^n: \: r\le |y-a|\le R\}$ with $a\in (0,\infty)^n$, $0<r<R$. Assume that $j\in J_{{m,1}}$. We have that
\begin{align*}
\supp(&\chi_{\{\epsilon_{j+1}\le |z|\le \epsilon_j\}}(x-\cdot)-\chi_{\{\epsilon_{j+1}\le |z|\le \epsilon_j\}}(z_Q-\cdot))\\
&\subseteq (A(x,\epsilon_{j+1},\epsilon_j)\setminus A(z_Q,\epsilon_{j+1},\epsilon_j))\cup (A(z_Q,\epsilon_{j+1},\epsilon_j)\setminus A(x,\epsilon_{j+1},\epsilon_j))\\
&=:A_j(x,z_Q).
\end{align*}
Since $x\in Q$ and $j\in J_{{m,1}}$, $A_j(\red{x},z_Q)\subset A_1\cup A_2$, where 
$$
A_1=A(x,\epsilon_{j+1}-2^M\red{l_Q}, \epsilon_{j+1}+2^M\red{l_Q})
$$
and 
$$
A_2=A(x,\epsilon_{j}-2^M\red{l_Q}, \epsilon_{j}+2^M\red{l_Q}).
$$
By passing from balls to cubes and proceeding as above, we deduce that 
\begin{align*}
m_\alpha&(\{ y\in (0,\infty)^n: \:\chi_{\{\epsilon_{j+1}\le |{z}|\le \epsilon_j\}}(x-y)-\chi_{\{\epsilon_{j+1}\le |{z}|\le \epsilon_j\}}(z_Q-y)\neq 0\})\le m_\alpha(A_1\cup A_2) \\
&\le C2^M\red{l_Q}(\epsilon_j+2^M\red{l_Q})^{n-1}\left[\prod_{i=1}^n(x_i+\epsilon_j+2^M\red{l_Q})^{2\alpha_i+1}+\prod_{i=1}^n(z_{Q,i}+\epsilon_j+2^M\red{l_Q})^{2\alpha_i+1}\right]\\
&\le C2^M\red{l_Q}(2^m\red{l_Q}+2^M\red{l_Q})^{n-1}\prod_{i=1}^n(z_{Q,i}+\epsilon_j+2^M\red{l_Q})^{2\alpha_i+1}\\
&\le C(\red{l_Q})^n2^{m(n-1)}\prod_{i=1}^n(z_{Q,i}+\epsilon_j+2^M\red{l_Q})^{2\alpha_i+1},
\end{align*}
where $z_{Q,i}$ denotes the $i$-coordinate of $z_{Q}.$
By \eqref{eq3.4}, we get that
\begin{align*}
|&\mathcal{R}^i_{\alpha,loc}(z_Q,y)|\le \frac{C}{m_\alpha(B(z_Q,|z_Q-y|))}\\
&\quad\le \frac{C}{|z_Q-y|^n\prod_{i=1}^n|z_{Q,i}+|z_Q-y||^{2\alpha_i+1}}\\
&\quad\le\frac{C}{(2^{m-1}\red{l_Q})^n\prod_{i=1}^n(z_{Q,i}+\epsilon_{j+1}-\red{l_Q})^{2\alpha_i+1}}\\
 &\quad\le\frac{C}{(2^{m-1}\red{l_Q})^n\prod_{i=1}^n(z_{Q,i}
	{+\epsilon_{j}+2^M\red{l_Q}})^{2\alpha_i+1}},\:\: y\in ((0,\infty)^n\setminus(3Q)^c)\cap A_j(x,z_Q).
\end{align*}
In the last inequality we have used that
\begin{align*}
    z_{Q,i}+\epsilon_{j+1}-\red{l_Q}&=z_{Q,i}+\frac{\epsilon_{j}}{4}+\frac{\epsilon_{j+1}}{2}+\frac{\epsilon_{j+1}}{2}-\frac{\epsilon_{j}}{4}-\red{l_Q}\ge z_{Q,i}+\frac{\epsilon_{j}}{4}+\frac{\epsilon_{j+1}}{2}-\red{l_Q}\\
    &\ge z_{Q,i}+\frac{\epsilon_{j}}{4}+2^{m-2}\red{l_Q}-\red{l_Q}\ge\frac{1}{4}(z_{Q,i}+{\epsilon_{j}}+2^{M}\red{l_Q}).
\end{align*}
Then,
\begin{small}
\begin{align*}
\sum_{j\in J_{m,1}}\beta_j^\rho &\le C \sum_{j\in J_{m,1}} \left(\frac{m_\alpha(\{ y\in (0,\infty)^n: \:\chi_{\{\epsilon_{j+1}\le |{z}|\le \epsilon_j\}}(x-y)-\chi_{\{\epsilon_{j+1}\le |{z}|\le \epsilon_j\}}(z_Q-y)\neq 0\})}{(2^{m-1}\red{l_Q})^n\prod_{i=1}^n(z_{Q,i}+\epsilon_{j}+2^M\red{l_Q})^{2\alpha_i+1}} \right)^\rho\\
&\le C \sum_{j\in J_{m,1}} \frac{1}{2^{m\rho}}.
\end{align*}
\end{small}
Since the set $J_{m,1}$ has at most $2^{m-1-M}$ elements, we conclude that
$$
\sum_{j\in J_{m,1}}\beta_j^\rho \le C 2^{m(1-\rho)}\le C2^{-m}.
$$
Suppose that $j\in J_{m,2}$.  As above, we have that
\begin{align*}
&m_\alpha(\{ y\in (0,\infty)^n: \:\chi_{\{\epsilon_{j+1}\le |{z}|\le \epsilon_j\}}(x-y)-\chi_{\{\epsilon_{j+1}\le |{z}|\le \epsilon_j\}}(z_Q-y)\neq 0\})\\&\\
&\le m_\alpha(\{ y\in (0,\infty)^n: \:\chi_{\{\epsilon_{j+1}\le |{z}|\le \epsilon_j\}}(x-y)=1\}\\&\qquad+m_\alpha(\{ y\in (0,\infty)^n: \:\chi_{\{\epsilon_{j+1}\le |{z}|\le \epsilon_j\}}(z_Q-y)=1\})\\
&\le C(\epsilon_j-\epsilon_{j+1})\epsilon_j^{n-1}\left(\prod_{i=1}^n(x_i+\epsilon_j)^{2\alpha_i+1}+\prod_{i=1}^n(z_{Q,i}+\epsilon_j)^{2\alpha_i+1} \right)\\
&\le C(\epsilon_j-\epsilon_{j+1})\epsilon_j^{n-1}\left(\prod_{i=1}^n(z_{Q,i}+\red{l_Q}+\epsilon_j)^{2\alpha_i+1}+\prod_{i=1}^n(z_{Q,i}+\epsilon_j)^{2\alpha_i+1} \right)
\end{align*}
and 
\begin{align*}
|&\mathcal{R}^i_{\alpha,loc}(z_Q,y)|\le \frac{C}{m_\alpha(B(z_Q,|z_Q-y|))}\le \frac{C}{|z_Q-y|^n\prod_{i=1}^n(z_{Q,i}+|z_Q-y|)^{2\alpha_i+1}}\\
&\le\frac{C}{(2^{m-1}\red{l_Q})^n\prod_{i=1}^n(z_{Q,i}+2^{m-1}\red{l_Q})^{2\alpha_i+1}}, \quad y\in A_j(x, z_Q)\cap ((0,\infty)^n\setminus(3Q)). 
\end{align*}
We get that
\begin{align*}
\sum_{j\in J_{m,2}}&\left(\frac{m_\alpha(\{ y\in (0,\infty)^n: \:\chi_{\{\epsilon_{j+1}\le |{z}|\le \epsilon_j\}}(x-y)-\chi_{\{\epsilon_{j+1}\le |{z}|\le \epsilon_j\}}(z_Q-y)\neq 0\})}{(2^{m-1}\red{l_Q})^n\prod_{i=1}^n(z_{Q,i}+2^{m-1}\red{l_Q})^{2\alpha_i+1}} \right)^\rho\\
&\le C \sum_{j\in J_{m,2}}\frac{(\epsilon_j-\epsilon_{j+1})^\rho\epsilon_j^{(n-1)\rho}}{(2^m\red{l_Q})^{n\rho}}\le  C \sum_{j\in J_{m,2}}\frac{(\epsilon_j-\epsilon_{j+1})(2^M\red{l_Q})^{\rho-1}(2^m\red{l_Q})^{(n-1)\rho}}{(2^m\red{l_Q})^{n\rho}}\\
&\le  C \sum_{j\in J_{m,2}}\frac{\epsilon_j-\epsilon_{j+1}}{\red{l_Q}}2^{-m\rho}\le C 2^{\rho(1-m)}\le C 2^{-m}.
\end{align*}
Suppose that $j\in  J_{m,3}$. It is clear that $J_{m,3}$ contains at most one element. We distinguish two cases. Assume first that $\epsilon_j-\epsilon_{j+1}<2^M\red{l_Q}$. We proceed as in the case $j\in  J_{m,2}$. We can write
\begin{align*}
m_\alpha&(\{ y\in (0,\infty)^n: \:\chi_{\{\epsilon_{j+1}\le |{z}|\le \epsilon_j\}}(x-y)-\chi_{\{\epsilon_{j+1}\le |{z}|\le \epsilon_j\}}(z_Q-y)\neq 0\})\\
&\le C (\epsilon_j-\epsilon_{j+1})\epsilon_j^{n-1}\left(\prod_{i=1}^n(z_{Q,i}+\red{l_Q}+\epsilon_j)^{2\alpha_i+1}+\prod_{i=1}^n(z_{Q,i}+\epsilon_j)^{2\alpha_i+1}\right)\\
&\le C 2^M\red{l_Q}(2^m\red{l_Q}+2^M\red{l_Q})^{n-1}\left(\prod_{i=1}^n(z_{Q,i}+\red{l_Q}+\epsilon_j)^{2\alpha_i+1}+\prod_{i=1}^n(z_{Q,i}+\epsilon_j)^{2\alpha_i+1}\right),
\end{align*}
and
$$
\beta_j\le C \frac{2^M\red{l_Q}(2^m\red{l_Q}+2^M\red{l_Q})^{n-1}}{(2^{m-1}\red{l_Q})^n}\le C 2^{-m}.
$$
Secondly, suppose that $\epsilon_{j}-\epsilon_{j+1}>2^M\red{l_Q}$. We now argue as in the case $j\in J_{m,1}$.

By keeping the notation above, we have that
$$
\supp(\chi_{\{\epsilon_{j+1}\le |z|\le \epsilon_j\}}(x-\cdot)-\chi_{\{\epsilon_{j+1}\le |z|\le \epsilon_j\}}(z_Q-\cdot))\subset A_j(x,z_Q)\subset A_1\cup A_2.
$$
We get that
\begin{align*}
m_\alpha(A_1)&\le C 2^M\red{l_Q}(\epsilon_{j+1}+2^M\red{l_Q})^{n-1}\prod_{i=1}^n (z_{Q,i}+\epsilon_{j+1}+2^M \red{l_Q})^{2\alpha_i+1}\\
&\le C(\epsilon_{j+1})^{n-1}\red{l_Q}\prod_{i=1}^n (z_{Q,i}+\epsilon_{j+1}+2^M \red{l_Q})^{2\alpha_i+1}\\
&\le C(\epsilon_{j+1})^{n-1}\red{l_Q}\prod_{i=1}^n (z_{Q,i}+2^m \red{l_Q})^{{2\alpha_i+1}}
\end{align*}
and
\begin{align*}
|\mathcal{R}^i_{\alpha,loc}(z_Q,y)|&\le\frac{C}{(2^{m-1}\red{l_Q})^n\prod_{i=1}^n(z_{Q,i}+\epsilon_{j+1}-\red{l_Q})^{2\alpha_i+1}}\\
&\le\frac{C}{(2^{m-1}\red{l_Q})^n\prod_{i=1}^n (z_{Q,i}+2^m \red{l_Q})^{2\alpha_i+1}}, \quad y\in A_1\cap ((0,\infty)^n\setminus(3Q)). 
\end{align*}
We choose $J_j\in \N$ such that $2^{J_j-1}\red{l_Q}\le \epsilon_{j}\le 2^{J_j}\red{l_Q}.$ It is clear that $J_j>m$. We obtain that
\begin{align*}
m_\alpha(A_2)&\le C 2^M\red{l_Q} (\epsilon_{j}+2^M \red{l_Q})^{n-1}\prod_{i=1}^n (z_{Q,i}+\epsilon_{j}+2^M \red{l_Q})^{2\alpha_i+1}\\
&\le C \red{l_Q}\epsilon_{j}^{n-1}\prod_{i=1}^n (z_{Q,i}+\epsilon_{j})^{2\alpha_i+1}\le C 2^{J_j(n-1)}(\red{l_Q})^n\prod_{i=1}^n (z_{Q,i}+\epsilon_{j})^{2\alpha_i+1},
\end{align*}
and
\begin{align*}
|\mathcal{R}^i_{\alpha,loc}(z_Q,y)|&\le\frac{C}{(2^{J_j}\red{l_Q})^n\prod_{i=1}^n(z_{Q,i}+|z_{Q}-y|)^{2\alpha_i+1}}\\
&\le\frac{C}{(2^{J_j}\red{l_Q})^n\prod_{i=1}^n (z_{Q,i}+\epsilon_{j}-2^M \red{l_Q}-\red{l_Q})^{2\alpha_i+1}} \\
&\le \frac{C}{(2^{J_j}\red{l_Q})^n\prod_{i=1}^n (z_{Q,i}+\epsilon_{j})^{2\alpha_i+1}},
\quad y\in A_2\cap ((0,\infty)^n\setminus(3Q)). 
\end{align*}
It follows that
$$
\beta_j\le C (2^{-m}+2^{-J_j})\le C 2^{-m}.
$$
Then, 
$$
\sum_{j\in J_{m,3}}\beta_j^\rho\le C 2^{-m}.
$$
By putting together all the above estimates, we conclude that
$$
 |\V_\rho(\{\mathcal{R}_{i,\alpha;\epsilon,loc}\}_{\epsilon>0})(f_2)(x)-c|\le C \|f\|_{L^\infty((0,\infty)^n)}, \quad  \red{ x\in Q.}
$$
We get
$$
\frac{1}{m_\alpha(Q)}\int_Q |\V_\rho(\{\mathcal{R}_{i,\alpha;\epsilon,loc}\}_{\epsilon>0})(f_2)(x)-c|dm_\alpha(x)\le C \|f\|_{L^\infty((0,\infty)^n)}.
$$
Thus we have proved that $\V_\rho(\{\mathcal{R}_{i,\alpha;\epsilon,loc}\}_{\epsilon>0})$ is bounded from $L^\infty((0,\infty)^n)$ into \newline $BMO((0,\infty)^n,m_\alpha)$.

The operator  $\V_\rho(\{\mathcal{R}_{i,\alpha;\epsilon,loc}\}_{\epsilon>0})$ is sublinear and positive. We have just proved that $\V_\rho$ maps $L^\infty((0,\infty)^n)+L^2((0,\infty)^n,m_\alpha)$ into  $ L^1_{loc}((0,\infty)^n,m_\alpha)$. By using \cite[Lemma 5.1]{Mas}, we can apply an interpolation argument to see that $\V_\rho(\{\mathcal{R}_{i,\alpha;\epsilon,loc}\}_{\epsilon>0})$ is bounded from $L^q((0,\infty)^n,m_\alpha)$ into itself, for every $2<q<\infty$.

From the arguments in Lemma  {\ref{Claim3}}, we obtain that $\V_\rho(\{\mathcal{R}_{i,\alpha;\epsilon,loc}\}_{\epsilon>0})$ is bounded from $L^q((0,\infty)^n,\nu_\alpha)$ into itself, for every $1<q<\infty$ and from $L^1((0,\infty)^n,\nu_\alpha)$ into $L^{1,\infty}((0,\infty)^n,\nu_\alpha)$.
\edproof

\subsection{Proof of Theorem \ref{teo1.3} ii)}

\textcolor{white}{v}
\vspace{0.2cm }

 {The $L^p$-boundedness properties of the oscillation operator 
$\OO(\{\mathcal{R}_{i,\alpha;\epsilon}\}_{\epsilon>0},\{\epsilon_j\}_{j\in\N})$ can be proved in an analogous way as we did for the variation operator $\V_\rho(\{\mathcal{R}_{i,\alpha;\epsilon,loc}\}_{\epsilon>0})$. We leave the details to the interested reader.}

\edproof

\subsection{Proof of Theorem \ref{teo1.3} iii)}

\textcolor{white}{v}
\vspace{0.2cm }

Finally, the $L^p$-boundedness properties of the jump operators $ \lambda \Lambda(\{{R}_{i,\alpha;\epsilon}\}_{\epsilon>0},\lambda)^{1/\rho}$ with $\rho>0$, can be deduced from the corresponding properties for $\V_\rho(\{{R}_{i,\alpha;\epsilon}\}_{\epsilon>0})$, see \cite[p. 6712]{JSW}.

\edproof
\vspace{0.2cm}

{\bf Acknowledgments.}
	The first author was partially supported by PID2019-106093GB-I00 and the second author 
was partially supported by ERCIM `Alain Bensoussan' Fellowship Programme.
\vspace{0.1cm}

%
%
%
%
%
%
    
\bibliographystyle{siam}
\bibliography{references}

\end{document}